\tikzset{
	MyPersp/.style={scale=2,x={(0.8cm,0cm)},y={(0cm,0.25cm)},
    z={(0cm,1cm)}},
	MyPoints/.style={fill=white,draw=black,thick}
		}
 \definecolor{darkgreen}{HTML}{336633}
 \definecolor{darkred}{HTML}{993333}
\definecolor{myred}{rgb}{0.75,0,0}
\definecolor{mygreen}{rgb}{0,0.5,0}
\definecolor{myblue}{rgb}{0,0,0.65}
\newcommand{\R}{\mathbb{R}}
\newcommand{\F}{\mathbb{F}}
\newcommand{\K}{\mathbb{K}}
\newcommand{\bk}{\Bbbk}
\newcommand{\Z}{\mathbb{Z}}
\newcommand{\scA}{\mathscr{A}}
\newcommand{\Db}{D^{\mathrm{b}}}
\newcommand{\Par}{\mathsf{Parity}}
\newcommand{\Gv}{G^{\vee}}
\newcommand{\bX}{\mathbf{X}}
\newcommand{\Gr}{{\EuScript Gr}}
\newcommand{\Fl}{{\EuScript Fl}}
\newcommand{\Perv}{\mathsf{Perv}}
\newcommand{\cF}{\mathcal{F}}
\newcommand{\cE}{\mathcal{E}}
\newcommand{\cG}{\mathcal{G}}
\newcommand{\cH}{\mathcal{H}}
\newcommand{\simto}{\xrightarrow{\sim}}
\def\lotimes{\@ifnextchar_{\@lotimessub}{\@lotimesnosub}}
\def\@lotimessub_#1{\mathchoice{\mathbin{\mathop{\otimes}^L}_{#1}}%
  {\otimes^L_{#1}}{\otimes^L_{#1}}{\otimes^L_{#1}}}
\def\@lotimesnosub{\mathbin{\mathop{\otimes}^L}}
\newcommand{\id}{\mathrm{id}}
\DeclareMathOperator{\Hom}{Hom}
\newcommand{\excise}[1]{}
\newtheorem*{conj*}{Conjecture}
\newtheorem*{thm*}{Theorem}
\newtheorem*{cor*}{Corollary}
\numberwithin{equation}{section}
\newtheorem{thm}{Theorem}[section]
\newtheorem{lem}[thm]{Lemma}
\newtheorem{prop}[thm]{Proposition}
\newtheorem{cor}[thm]{Corollary}
\theoremstyle{definition}
\theoremstyle{remark}
\newtheorem{rmk}[thm]{Remark}
\newtheorem*{rmk*}{Remark}
\DeclareMathOperator{\Rep}{Rep}
\renewcommand{\a}{\alpha}
\newcommand{\Sim}{\mathbb{L}}  
\newcommand{\hSim}{\widehat{\mathbb{L}}}
\newcommand{\Til}{\mathbb{T}}  
\newcommand{\hZ}{\widehat{Z}}
\newcommand{\hQ}{\widehat{Q}}
\newcommand{\Per}{\mathcal{P}}
\newcommand{\Wf}{W_{\mathrm{f}}}
\newcommand{\wf}{w_{\mathrm{f}}}
\newcommand{\Waff}{W}
\newcommand{\fW}{{}^{\mathrm{f}} W}
\newcommand{\fWext}{{}^{\mathrm{f}} W_{\mathrm{ext}}}
\newcommand{\Sf}{S_{\mathrm{f}}}
\newcommand{\Saff}{S}
\newcommand{\asph}{\mathrm{asph}}
\newcommand{\Haff}{\mathcal{H}}
\newcommand{\Hext}{\mathcal{H}_{\mathrm{ext}}}
\newcommand{\Hf}{\mathcal{H}_{\mathrm{f}}}
\newcommand{\Masph}{\mathcal{M}^{\asph}}
\newcommand{\Msph}{\mathcal{M}^{\mathrm{sph}}}
\newcommand{\Masphext}{\mathcal{M}^{\asph}_{\mathrm{ext}}}
\newcommand{\Msphext}{\mathcal{M}^{\mathrm{sph}}_{\mathrm{ext}}}
\newcommand{\MMasphext}{\mathsf{M}^{\asph}_{\mathrm{ext}}}
\newcommand{\MMsphext}{\mathsf{M}^{\mathrm{sph}}_{\mathrm{ext}}}
\newcommand{\puH}{{}^p \hspace{-1pt} \underline{H}}
\newcommand{\puN}{{}^p \hspace{-1pt} \underline{N}}
\newcommand{\puM}{{}^p \hspace{-1pt} \underline{M}}
\newcommand{\puP}{{}^p\hspace{-1pt} \underline{P}}
\newcommand{\pph}{{}^p \hspace{-1pt} h}
\newcommand{\uM}{\underline{M}}
\newcommand{\uH}{\underline{H}}
\newcommand{\uN}{\underline{N}}
\newcommand{\uP}{\underline{P}}
\newcommand{\sgn}{\mathsf{sgn}}
\newcommand{\triv}{\mathsf{triv}}
\def\g{\gamma}
\newcommand{\ext}{\mathrm{ext}}
\newcommand{\Afund}{A_{\mathrm{fund}}}
\newcommand{\Wext}{W_{\mathrm{ext}}}
\newcommand{\scK}{\mathscr{K}}
\newcommand{\scO}{\mathscr{O}}
\newcommand{\IW}{\mathcal{IW}}
\newcommand{\op}{\mathrm{op}}
\newcommand{\sph}{\mathrm{sph}}
\newcommand{\Sp}{\mathrm{Sp}}
\newcommand{\height}{\mathrm{ht}}
\title{A simple character formula}
\thanks{
This project has received
funding from the European Research Council (ERC) under the European Union's Horizon 2020
research and innovation programme (grant agreement No 677147).}
\author{Simon Riche}
\address{Universit\'e Clermont Auvergne, CNRS, LMBP, F-63000 Clermont-Ferrand, France.}
\email{simon.riche@uca.fr}
\author{Geordie Williamson}
\address{School of Mathematics and Statistics F07, University of
  Sydney NSW 2006, Australia.}
\email{g.williamson@sydney.edu.au}
\dedicatory{Dedicated to Jens Carsten Jantzen,\\on the occasion of his 70th birthday.}
\begin{document}

\begin{abstract}
In this paper we prove a character formula expressing the classes of simple representations in the principal block of a simply-connected semisimple algebraic group $G$ in terms of baby Verma modules, under the assumption that the characteristic of the base field is bigger than $2h-1$, where $h$ is the Coxeter number of $G$. This provides a replacement for Lusztig's conjecture, valid under a reasonable assumption on the characteristic.
\end{abstract}

\maketitle

\section{Introduction}
\label{sec:intro}

\subsection{Simple modules for reductive groups}
\label{ss:intro-simple}

Let $G$ denote a connected reductive algebraic group over an algebraically closed field
$\bk$ of characteristic $p > 0$, with simply connected derived subgroup. We fix a maximal torus and Borel
subgroup $T \subset B \subset G$. Then for every dominant weight $\lambda$ we have a
Weyl module $\Delta(\lambda)$ and its simple quotient $\Sim(\lambda)$,
both of highest weight $\lambda$. We obtain in this way a
classification of the simple algebraic $G$-modules. A central
problem in the field is to compute the characters of these simple modules.

Steinberg's tensor product theorem reduces this question to the case of
$p$-restric\-ted highest weights. For a $p$-restricted dominant weight $\lambda$ it is
known that $\Sim(\lambda)$ stays simple upon restriction to $G_1$, the
first Frobenius kernel of $G$. Moreover, all simple $G_1$-modules
occur in this way. Thus, understanding the simple $G$-modules
is equivalent to understanding the simple $G_1$-modules.

Instead of working with $G_1$-modules, it is technically more
convenient to work with $G_1T$-modules.
Simple $G_1T$-modules stay simple upon restriction to
$G_1$, and thus we can instead try to answer our question in terms of
$G_1T$-modules. Via
Brauer--Humphreys reciprocity, this question can be rephrased in terms
of indecomposable projective $G_1T$-modules (see~\S\ref{ss:intro-G1T} below for details).

In 1980 Lusztig~\cite{lusztig-patterns} proposed a conjecture for these characters if $p$
is not too small (in the
guise of ``Jantzen's generic decomposition patterns''). His conjecture
is in terms of the canonical basis in the periodic module for the
affine Hecke algebra. This formula is known to hold for large
$p$, see~\cite{kl,kt, LUSMon,ajs,fiebig}; however it is also known to fail for ``medium sized''
$p$, see~\cite{williamson}. In fact, at this point it is not known precisely when this formula holds.

Our goal in this paper is to define the $p$-canonical basis in the periodic
module and prove that the $p$-analogue of Lusztig's conjecture is true, as
long as $p \ge 2h-1$ where $h$ is the Coxeter number of $G$. Thus we obtain a character formula
for simple $G$-modules in terms of $p$-Kazhdan-Lusztig polynomials. Our proof builds on a character formula for tilting $G$-modules, proved in a joint work with P. Achar and S. Makisumi~\cite{amrw}.\footnote{One year after the first version of the present paper was made available, a different proof of this formula was obtained by the authors in~\cite{rw-smith}.}

At present, $p$-canonical bases are very difficult to compute; for
this reason our formula is certainly not the final answer to this
problem. However it gives a good conceptual understanding of where the
difficulties lie, and provides a way to compute characters or
multiplicities which is much more efficient than classical techniques
in representation theory. For instance, unpublished intensive efforts
of Jantzen were not sufficient to completely answer the question of
describing simple characters in types $\mathbf{B}_3$, 
$\mathbf{C}_3$ and $\mathbf{A}_4$. Preliminary results of
Jensen--Scheinmann indicate that our formula will allow one to
solve these cases, and maybe the case of some bigger
groups with the help of a computer. (For example, Jensen and Scheinmann
obtain a missing multiplicity in Jantzen's work
for $\mathbf{A}_4$ in a few lines using our
results.) We also believe these results will help answer the important
question of when exactly Lusztig's character formula holds.

\begin{rmk}
 \begin{enumerate}
  \item 
  A
conjecture of Donkin~\cite{donkin} would imply that our formula is valid for $p >
h$; see Remark~\ref{rmk:Donkin} below for more details.
\item
It has been known for a long time that, in theory, knowing the characters of tilting modules is enough to determine the characters of simple modules (see e.g.~\cite[\S 1.8]{rw} for details, and~\cite{sobaje} for more recent advances on this question). However, obtaining a \emph{concrete} character formula for simples out of a given character formula for tilting modules is a different story, which is the main topic of the present paper (taking as input the character formula from~\cite{amrw}).
 \end{enumerate}
\end{rmk}

\subsection{Representation theory of $G_1 T$}
\label{ss:intro-G1T}

We continue with the notation of~\S\ref{ss:intro-simple}, and
let $\bX:=X^*(T)$ be the character lattice of $T$. Let also $B^+$ be the Borel subgroup of $G$ opposite to $B$ with respect to $T$.

If as above $G_1 T$, resp.~$B_1^+ T$, denotes the preimage of the Frobenius twist $\dot{T}$ of $T$ under the Frobenius morphism of $G$, resp.~$B^+$, then
for each $\lambda \in \bX$ we have a $G_1 T$-module $\hZ(\lambda)$ (called the \emph{baby Verma module} attached to $\lambda$) obtained by coinducing to $G_1T$ the $1$-dimensional representation of $B_1^+ T$ defined by $\lambda$ (see~\S\ref{ss:G1T} below for details). The module $\hZ(\lambda)$ has a unique simple quotient $\hSim(\lambda)$, and the assignment $\lambda \mapsto \hSim(\lambda)$ induces a bijection between $\bX$ and the set of isomorphism classes of simple $G_1T$-modules. For any $\lambda,\mu \in \bX$ we have
\[
 \hSim(\lambda+p\mu) \cong \hSim(\lambda) \otimes \bk_{\dot{T}}(\mu)
\]
(where we identify the weight lattice of $\dot{T}$ with $\bX$ in such a way that the pullback under the Frobenius morphism $T \to \dot{T}$ corresponds to $\nu \mapsto p\nu$, and $\bk_{\dot{T}}(\mu)$ is viewed as a $G_1T$-module via the Frobenius morphism $G_1T \to \dot{T}$), and for $\lambda$ dominant and $p$-restricted $\hSim(\lambda)$ is the restriction to $G_1T$ of the simple $G$-module $\Sim(\lambda)$ considered in~\S\ref{ss:intro-simple}. In this way, understanding the simple $G$-modules, the simple $G_1$-modules or the simple $G_1T$ are equivalent problems.

\subsection{Characters of $G_1 T$-modules and alcoves}

Assume now that $p \geq h$.

Let $\Delta \subset \bX$ be the root system of $(G,T)$.
Let $\Wf$ and $W = \Wf \ltimes \Z \Delta$
denote the finite and affine Weyl groups, and denote their subsets of simple
reflections (determined by $B$) by $\Sf$ and $S$ respectively. The affine Weyl group acts naturally on
$\bX \otimes_\Z \R$, giving rise to the set of alcoves $\mathscr{A}$. Let
$\Afund$ denote the fundamental alcove; then the map $x \mapsto x(\Afund)$ gives a
bijection 
\begin{equation}
\label{eqn:W-alcoves}
 W \simto \mathscr{A}.
\end{equation}

We will also consider the ``dot-action'' of $W$ on $\bX$, defined by
\[
 (xt_\lambda) \cdot_p \mu = x(\mu + p\lambda + \rho)-\rho
\]
for $x \in \Wf$, $\lambda \in \Z\Delta$ and $\mu \in \bX$, where $\rho$ is the halfsum of the positive roots.
Let $\Rep_0(G_1 T)$ denote the principal block of the category of algebraic finite-dimensional $G_1T$-modules, i.e.~the Serre subcategory generated by the simple modules $\hSim(\lambda)$ with $\lambda \in W \cdot_p 0$. By construction
the highest weights of simple and baby Verma
modules belonging to this block are labelled by the affine
Weyl group, and hence (via~\eqref{eqn:W-alcoves}) by alcoves. Given an
alcove $A$, let us denote the corresponding simple and baby Verma
modules by
$\hSim_A$ and $\hZ_A$. We denote the projective cover (equivalently, injective hull) of $\hSim_A$ by
$\hQ_A$. 

Each 
$\hQ_A$ admits a baby Verma flag
(that is, a finite filtration whose successive subquotients are isomorphic to
baby Verma modues). We write $(\hQ_A:\hZ_B)$ for the number of times
the baby Verma module $\hZ_B$ occurs in such a filtration. (This number is known to be independent of the chosen filtration.) For an alcove $A \in \mathscr{A}$, consider the element
\[
q_A := \sum_{B \in \mathscr{A}}  (\hQ_A : \hZ_B ) B \in \Z[\mathscr{A}].
\]
By Brauer--Humphrey's reciprocity~\cite{humphreys}, it is known that 
\[
(\hQ_A:\hZ_B) = [\hZ_B :
\hSim_A]
\]
for any pair of alcoves $(A,B)$. As the characters of the baby Verma modules are known (and
easy!), knowledge of the elements $q_A$ for all alcoves $A$ therefore
implies knowledge of the characters of the simple $G_1T$-modules in
the principal block, and hence of all simple $G_1T$-modules by Jantzen's
translation principle.

\subsection{Statement}
\label{ss:intro-statement}

Let $\Haff$ denote the Hecke algebra of the Coxeter system $(\Waff,\Saff)$ (an algebra over the ring $\Z[v^{\pm 1}]$, with standard basis $(H_w : w \in W)$), and let
$\Per$ denote its periodic (right) module. (We follow the notational conventions of~\cite{soergel-comb-tilting}.) As a $\Z[v^{\pm
  1}]$-module, $\Per$ is free with basis given by alcoves:
\[
\Per = \bigoplus_{A \in \mathscr{A}} \Z[v^{\pm 1}] A.
\]
In~\cite{lusztig-patterns}, Lusztig has defined a canonical basis for $\Per$; following the conventions of~\cite[Theorem~4.3]{soergel-comb-tilting} we will denote this family of elements by $\{ \uP_A : A \in \mathscr{A} \}$. 
(Note that this terminology might be
  misleading: $\{ \uP_A : A \in \mathscr{A} \}$ is not a
  $\Z[v^{\pm 1}]$-basis of $\Per$, but of a certain submodule.)

Lusztig conjectured (see the last three paragraphs of the
  introduction to \cite{lusztig-patterns})
that the canonical basis determines the characters
of indecomposable projective modules, as follows:
\[
q_A = (\uP_{\hat{A}})_{ v \mapsto 1}.
\]
(Here, given $R = \sum p_A A \in \Per$, $R_{v\mapsto 1} =
\sum p_A(1) A \in \Z[\mathscr{A}]$ denotes its specialisation at $v =
1$, and $A \mapsto \hat{A}$ is a simple operation on alcoves, whose definition is recalled in Section~\ref{sec_pm}.) See also~\cite[\S 3]{fiebig-moment-graph} for the relation with Lusztig's original conjecture~\cite{lusztig} for characters of the modules $\Sim(\lambda)$.

In this paper, we define the $p$-canonical basis $\{ \puP_A : A \in \mathscr{A} \}$ in the periodic module $\Per$, and we prove that it can be used to compute the elements $q_A$, as follows.

\begin{thm}
\label{thm:main-intro}
  Assume $p \ge 2h-1$. Then for any alcove $A$ we have
\[
q_A = (\puP_{\hat{A}})_{ v \mapsto 1}.
\]
\end{thm}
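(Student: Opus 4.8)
The plan is to reduce the statement to the known tilting character formula from~\cite{amrw} by exploiting the representation-theoretic bridge between projective $G_1T$-modules and tilting $G$-modules, and then to match this bridge with a combinatorial bridge on the Hecke-theoretic side. First I would recall the equivalence (valid for $p \ge 2h-1$, cf.~the work of Andersen--Jantzen--Soergel and its refinements) between the principal block $\Rep_0(G_1T)$ and a suitable category of modules over the combinatorial/diagrammatic category attached to the affine Weyl group, under which baby Verma modules $\hZ_A$ correspond to standard objects and the indecomposable projectives $\hQ_A$ correspond to indecomposable objects whose multiplicities are governed by $p$-Kazhdan--Lusztig combinatorics. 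Dually, I would recall from~\cite{amrw} that indecomposable tilting $G$-modules in the principal block, expressed in terms of Weyl (or dual Weyl) modules, are governed by the $p$-canonical basis of the antispherical module $\Masph$ (or the spherical module), via the antispherical analogue of the Riche--Williamson tilting-character formula.

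The second step is to define the $p$-canonical basis $\{\puP_A\}$ in the periodic module $\Per$ and to establish its compatibility with these two module structures. Here I would follow Lusztig's and Soergel's framework~\cite{soergel-comb-tilting}: the periodic module carries commuting actions relating it to both the antispherical module and the spherical module (or, more precisely, there are ``$\Sf$-antispherical'' and ``$\Sf$-spherical'' truncations, and translation functors across walls), and the $p$-canonical elements $\puP_A$ should be characterized by self-duality together with a triangularity condition with respect to the standard basis $\{A\}$, exactly mirroring the characterization of indecomposable projectives/tiltings by self-duality and a standard/costandard flag condition. The key point is that the operation $A \mapsto \hat A$ (recalled in Section~\ref{sec_pm}) implements, on the combinatorial side, precisely the passage from the tilting-module picture to the baby-Verma picture — this is the ``simple'' character-theoretic identity alluded to in the title, and is the affine/periodic shadow of the classical fact that $(\hQ_A:\hZ_B) = [\hZ_B:\hSim_A]$ combined with the reciprocity between tilting and projective multiplicities.

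The third step is to carry out the actual translation: using Brauer--Humphreys reciprocity $(\hQ_A:\hZ_B) = [\hZ_B:\hSim_A]$, one has $q_A = \sum_B [\hZ_B:\hSim_A]\, B$, so it suffices to compute the decomposition numbers $[\hZ_B:\hSim_A]$. By the equivalence with the diagrammatic category and the main result of~\cite{amrw}, these decomposition numbers are read off from the $p$-canonical basis of the antispherical module; one then transports this through the map relating the antispherical module to the periodic module and checks that the resulting element is $(\puP_{\hat A})_{v \mapsto 1}$. Concretely this amounts to verifying, on the level of $\Z[v^{\pm 1}]$-modules with their bar-involutions and standard bases, that the ``hat'' operation intertwines the antispherical (or spherical) $p$-canonical basis with the periodic $p$-canonical basis after the appropriate specialization, which is a bookkeeping exercise once all the normalizations (shifts, the roles of $\rho$ and $w_0$, the $v \mapsto 1$ specialization) are pinned down.

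I expect the main obstacle to be precisely this last matching of normalizations and the proof that the $p$-canonical basis of $\Per$ is the correct object — i.e.~that the self-dual triangular elements one writes down in $\Per$ genuinely coincide, after $v \mapsto 1$, with the multiplicity vectors $q_A$ coming from representation theory. This requires simultaneously controlling (i) the singular/wall-crossing behaviour, since the fundamental alcove and its walls must be handled with care when $p$ is only assumed $\ge 2h-1$ rather than large; (ii) the passage between the spherical and antispherical sides, which involves Soergel's duality and is where sign/length shifts are easy to get wrong; and (iii) the fact that $\{\puP_A\}$ spans only a submodule of $\Per$, so one must be careful that all the alcoves $\hat A$ actually lie in the range where $\puP_{\hat A}$ is defined. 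Everything else — the category equivalence, the reciprocity, the input from~\cite{amrw} — is either standard or quotable, so the real content is this combinatorial/normalization reconciliation.
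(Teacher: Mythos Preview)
Your outline misses the main technical contribution of the paper and misidentifies where the real difficulty lies. You describe the final step as ``a bookkeeping exercise once all the normalizations are pinned down,'' but in fact the heart of the proof is a substantive new result: the morphism $\varphi : \Msphext \to \Masphext$ (convolution by $\uN_{t_\varsigma}$) sends the $p$-canonical basis element $\puM_w$ to $\puN_{t_\varsigma w}$. This is Theorem~\ref{thm:main} (equivalently Theorem~\ref{thm:intro-phi}), and it is \emph{not} a normalization check. The paper proves it by categorifying $\varphi$ as a convolution functor $\Phi$ between categories of parity complexes on the affine flag variety and showing that $\Phi$ is \emph{full}; this fullness relies on the results of~\cite{bgmrr} (an Iwahori--Whittaker model for the Satake category) together with the perversity of spherical parity sheaves from~\cite{mr}. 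Without this, there is no reason the $p$-canonical bases of $\Msph$ and $\Masph$ should be compatible in the required way, and your third step has no content.

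Two further points. First, your proposed definition of $\puP_A$ via ``self-duality together with a triangularity condition'' is not available: as the paper explicitly notes, the known categorifications of the periodic module involve semi-infinite geometry and are not presently usable, so $\puP_A$ is instead \emph{defined} by substituting $\puM$ for $\uM$ in Lusztig's formula (Lemma~\ref{eq:LusztigLemma}). This definition is what makes the final deduction from Theorem~\ref{thm:multiplicities} straightforward. Second, the representation-theoretic bridge you invoke is not an abstract equivalence between $\Rep_0(G_1T)$ and a diagrammatic category; rather it is Jantzen's concrete result (Theorem~\ref{thm:jantzen}, requiring $p \ge 2h-2$) that $\Til(2(p-1)\rho + \wf\lambda)$ restricts to the indecomposable projective $\hQ(\lambda)$, which combined with the tilting formula from~\cite{amrw} and the $\varphi$-compatibility yields the projective multiplicities as values ${}^p h_{y,w}(1)$. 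So the architecture is: geometry $\Rightarrow$ Theorem~\ref{thm:main} $\Rightarrow$ (via~\cite{amrw} and Jantzen) Theorem~\ref{thm:multiplicities} $\Rightarrow$ (via the definition of $\puP$) Theorem~\ref{thm:main-intro}.
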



\subsection{Spherical and antispherical modules}
\label{ss:intro-sph-asph}


Theorem~\ref{thm:main-intro} will be obtained as a consequence of a relation between the $p$-canonical bases in two other $\Haff$-modules, namely the (twisted) spherical and the antispherical modules. We will denote by $(\uH_w : w \in \Waff)$ the Kazhdan--Lusztig basis of $\Haff$, see~\cite[Theorem~2.1]{soergel-comb-tilting}, and by $(\puH_w : w \in W)$ its $p$-canonical basis (see~\cite{jw, rw}).

The antispherical module $\Masph$ is defined as
\[
 \Masph=\sgn \otimes_{\Hf} \Haff
\]
where $\Hf$ is the Hecke algebra of $(\Wf,\Sf)$ (which is a subalgebra in $\Haff$ in a natural way), and $\sgn$ is the right $\Hf$-module defined as $\Z[v^{\pm 1}]$ with $H_s$ acting as multiplication by $-v$ for $s \in \Sf$. This module has a standard basis $(N_w : w \in \fW)$ parametrized by the subset $\fW \subset W$ consisting of elements $w$ which are minimal in $\Wf w$, where $N_w := 1 \otimes H_w$. It also has a Kazhdan--Lusztig basis $(\uN_w : w \in \fW)$ and a $p$-canonical basis $(\puN_w : w \in \fW)$, where
\[
\uN_w = 1 \otimes \uH_w, \qquad \puN_w = 1 \otimes \puH_w.
\]
The subset $\mathscr{A}^+$ of $\mathscr{A}$ corresponding to $\fW$
under the bijection~\eqref{eqn:W-alcoves} consists of the alcoves
contained in the dominant Weyl chamber $\mathscr{C}^+$; hence we will
rather parametrize these bases by $\mathscr{A}^+$ and denote them
$(N_A : A \in \mathscr{A}^+)$, $(\uN_A : A \in \mathscr{A}^+)$ and
$(\puN_A : A \in \mathscr{A}^+)$.

On the other hand, for $\lambda \in \bX$ we denote by
$\overline{\lambda}$ the unique element in  $\overline{\Afund} \cap (W \cdot \lambda)$,
and let $W_{\overline{\lambda}} \subset W$ denote its
  isotropy group. This is a standard parabolic subgroup of $W$
  isomorphic to $\Wf$; we denote by $S_{\overline{\lambda}}:=S \cap W_{\overline{\lambda}}$ its subset of
  simple reflections, and by $\mathcal{H}_{\overline{\lambda}} \subset
  \mathcal{H}$ the associated Hecke algebra. Let $\triv
  _{\overline{\lambda}}$ denote the ``trivial'' right $\mathcal{H}_{\overline{\lambda}}$-module (defined as $\Z[v^{\pm 1}]$, with $H_s$ acting as multiplication by $v^{-1}$ for $s \in S _{\overline{\lambda}}$), and set
\[
 \Msph _{\overline{\lambda}} := \triv _{\overline{\lambda}} \otimes_{\mathcal{H}_{\overline{\lambda}}} \Haff.
\]
Then $\Msph _{\overline{\lambda}}$ has a standard basis
$(M^{\overline{\lambda}}_w := 1 \otimes H_w: w \in {}^{\overline{\lambda}}
\hspace{-1pt} \Waff)$ parametrized by the subset
${}^{\overline{\lambda}} \hspace{-1pt} \Waff \subset \Waff$ consisting
of elements $w$ which are minimal in $W _{\overline{\lambda}} w$.

If $w _{\overline{\lambda}}$ is the longest element in $W _{\overline{\lambda}}$, 
then the map $1 \otimes h \mapsto \uH_{w_{\overline{\lambda}}} \cdot
h$ provides an embedding
\begin{equation}
\label{eqn:def-zeta-lambda}
\zeta_{\overline{\lambda}}:\Msph_{\overline{\lambda}} \hookrightarrow \Haff.
\end{equation}
For any $w \in {}^{\overline{\lambda}} \hspace{-1pt} \Waff$, the
element $\uH_{w_{\overline{\lambda}} w}$,
resp.~$\puH_{w_{\overline{\lambda}} w}$, belongs to the image of
$\zeta_{\overline{\lambda}}$, and if we denote by
$\uM^{\overline{\lambda}}_w$, resp. $\puM^{\overline{\lambda}}_w$, its
preimage in $\Msph_{\overline{\lambda}}$, then
$(\uM^{\overline{\lambda}}_w : w \in {}^{\overline{\lambda}}
\hspace{-1pt} \Waff)$ and $(\puM^{\overline{\lambda}}_w : w \in
{}^{\overline{\lambda}} \hspace{-1pt} \Waff)$ are bases of
$\Msph_{\overline{\lambda}}$, called the Kazhdan--Lusztig and the
$p$-canonical basis respectively.

Recall that the set $\mathscr{A}$ also admits a natural \emph{right} action of
$\Waff$; through the identification~\eqref{eqn:W-alcoves} this
action corresponds to the right multiplication of $\Waff$ on
itself, and will be denoted $(A,x) \mapsto A \cdot x$. Then the
assignment $w \mapsto (\lambda + \Afund) \cdot w$ identifies
${}^{\overline{\lambda}} \hspace{-1pt} \Waff$ with the subset
$\mathscr{A}^+_\lambda \subset \mathscr{A}$ consisting of alcoves
contained in $\lambda+\mathscr{C}^+$. In this way the bases of
$\Msph_{\overline{\lambda}}$ considered above can be labelled by
$\mathscr{A}^+_{\lambda}$, and will be denoted $(M^\lambda_A : A \in
\mathscr{A}_{\lambda}^+)$ and $(\puM^{\lambda}_A : A \in
\mathscr{A}_{\lambda}^+)$. (Note that this labelling depends on
$\lambda$, not only on $\overline{\lambda}$.)

\begin{rmk}
\label{rmk:notation-p=0}
 The bases $(\uN_w : w \in \fW)$ and $(\uM^{\overline{\lambda}}_w : w \in
 {}^{\overline{\lambda}} \Waff)$ coincide with the bases considered in~\cite[Theorem~3.1]{soergel-comb-tilting} (for the parabolic subgroups $\Wf$ and $W_{\overline{\lambda}}$ respectively), see~\cite[Proof of Proposition~3.4]{soergel-comb-tilting}. In case $\lambda=\overline{\lambda}=0$, we will write $\Msph$ for $\Msph_0$, $M_w$ for $M_w^0$, $\uM_w$ for $\uM_w^0$, and $\puM_w$ for $\puM_w^0$.
\end{rmk}


\subsection{Outline of the proof of Theorem~\texorpdfstring{\ref{thm:main-intro}}{}}

Let $\lambda \in \bX$ be such that $\lambda + x(\Afund)$ belongs to
$\mathscr{C}^+$  for all $x \in \Wf$. It is easily seen that the assignment $1 \otimes h \mapsto \uN_{\lambda+\Afund} \cdot h$ induces an $\Haff$-module morphism
\[
 \varphi_\lambda : \Msph_{\overline{\lambda}} \to \Masph.
\]
Now we assume (for simplicity) that $G$ is semisimple, and specialize to the case $\lambda=\rho$.
The key step in our approach to Theorem~\ref{thm:main-intro} is the following claim.

\begin{thm}
\label{thm:intro-phi}
 Assume that $p$ is good for $G$. Then for any $A \in \mathscr{A}^+_\rho$ we have
 \[
  \varphi_\rho(\puM^\rho_A) = \puN_A.
 \]
\end{thm}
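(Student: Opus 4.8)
The plan is to prove the stronger ``categorified'' statement and then decategorify. The $p$-canonical bases $(\puN_A)$ and $(\puM^\rho_A)$ are the characters (in the split Grothendieck group) of the indecomposable objects in the antispherical and spherical categories of Soergel bimodules (or parity sheaves on the affine flag variety), so the natural approach is to realise $\varphi_\rho$ as a functor between these categorifications and show that it sends indecomposables to indecomposables. Concretely, I would work with the diagrammatic categories $\Dasph_{\mathrm{BS},\Bbbk}$ and the ``$\overline{\rho}$-spherical'' analogue: there is a functor given by tensoring on the left with the Bott--Samelson object $B_{w_{\overline{\rho}}}$ attached to the longest element of $W_{\overline{\rho}}$, which categorifies the map $1 \otimes h \mapsto \uH_{w_{\overline{\rho}}} \cdot h$, composed with the projection to the antispherical quotient. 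The point is that $B_{w_{\overline{\rho}}}$ applied to an indecomposable spherical object should stay indecomposable after passing to the antispherical quotient, and this is where the hypothesis that $p$ is good enters.

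The key geometric input is that, when $\lambda = \rho$, the weight $\rho$ is such that $\rho + x(\Afund) \subset \mathscr{C}^+$ for all $x \in \Wf$, i.e. the alcove $\rho + \Afund$ is ``deep enough'' in the dominant chamber, and more precisely it is the alcove in the $\Wf$-orbit of $\Afund$ that is furthest from the walls in the relevant sense; translation from the regular block to the $\overline{\rho}$-singular block and back is then an equivalence of the relevant additive categories onto direct summands, compatible with the bases. So the first step is to recall (from \cite{rw} or the companion references) that $\puN_A$ and $\puM^\rho_A$ are genuinely characters of indecomposable parity/Soergel objects, and that $\zeta_{\overline{\rho}}$ and $\varphi_\rho$ lift to fully faithful functors. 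The second step is to identify $\varphi_\rho$, under these identifications, with ``convolve with $\uH_{w_{\overline{\rho}}}$ then quotient'', and to check that on Bott--Samelson generators this functor is well-defined and exact in the appropriate sense. The third step is the crux: show that for $A \in \mathscr{A}^+_\rho$ the image under $\varphi_\rho$ of the indecomposable object with character $\puM^\rho_A$ is again indecomposable with character $\puN_A$ — equivalently, that $\varphi_\rho(\puM^\rho_A)$ has no lower terms in the $p$-canonical filtration. One does this by a support/combinatorial argument: $\varphi_\rho(\puM^\rho_A)$ is selfdual, lies in $\Masph$, and equals $N_A$ plus a $\Z_{\ge 0}[v^{\pm1}]$-combination of $N_B$ with $B < A$; to conclude it equals $\puN_A$ one must show the ``extra'' terms vanish. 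The cleanest way is to use that $w_{\overline{\rho}}$ is the \emph{longest} element of $W_{\overline{\rho}} \cong \Wf$, so that $\uH_{w_{\overline{\rho}}} \cdot \puH_w$ for $w \in {}^{\overline{\rho}}W$ already equals $\puH_{w_{\overline{\rho}} w}$ in $\Haff$ (this is a standard fact about $p$-canonical bases and multiplication by $\uH_s$ for $s$ in the parabolic, iterated), and then project: the projection $\Haff \to \Masph$ sends $\puH_{w_{\overline{\rho}}w}$ to $\puN_{\overline{\rho}w} = \puN_A$ precisely because $\overline{\rho} w \in \fW$, using that $\rho$-genericity guarantees $w_{\overline{\rho}} w$ and hence the relevant parity object ``survives'' the antispherical truncation without breaking up.

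The main obstacle I anticipate is precisely this last point: controlling the behaviour of $p$-canonical basis elements under the antispherical projection when one multiplies by $\uH_{w_{\overline{\rho}}}$. Over $\Q$ (the $p=0$ case) this is classical — it is essentially \cite[Proposition 3.4]{soergel-comb-tilting} combined with Soergel's combinatorics of translation functors — but for $p$-canonical bases one cannot argue by positivity of structure constants alone, and one genuinely needs the geometric statement that the relevant parity sheaf on the affine flag variety restricts to a parity sheaf (still indecomposable) on the relevant partial flag variety, which is where goodness of $p$ is used to guarantee that the parity complexes exist and base change behaves well (e.g. via the results on $\Iv$-constructible versus $\Iw$-constructible parity sheaves and the Iwahori--Whittaker formalism). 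I would isolate this as a lemma: for $A \in \mathscr{A}^+_\rho$, the object $B_{w_{\overline{\rho}}} \otimes E^{\mathrm{sph}}_A$ is indecomposable in the antispherical quotient, with character $\puN_A$ — and prove it by a dévissage on the length of $A$, using that the lowest alcove $\rho + \Afund$ is the base case (where $\puM^\rho_{\rho+\Afund} = M^\rho_{\rho+\Afund}$ and $\varphi_\rho$ visibly sends it to $N_{\rho+\Afund} = \uN_{\rho+\Afura+\Afund}$... here one must simply track the combinatorics carefully), and the inductive step uses compatibility of $\varphi_\rho$ with the right $\Haff$-action together with the known recursion for the $p$-canonical basis.
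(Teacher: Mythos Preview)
Your high-level strategy --- categorify $\varphi_\rho$ and show the functor sends indecomposables to indecomposables --- is exactly what the paper does. But the execution you sketch has a genuine gap at the crucial step, and your categorification is not quite the one the paper uses.

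First, the categorification. The paper does \emph{not} model $\varphi_\rho$ as ``tensor with $B_{w_{\overline{\rho}}}$, then project to the antispherical quotient.'' Instead it uses the Iwahori--Whittaker model for $\Masph$: the antispherical category is realised as $\Par_{\IW}(\Fl,\K)$, the spherical one as $\Par_{G^\vee_\scO}(\Fl,\K)$, and the functor $\Phi$ is left convolution with a specific Iwahori--Whittaker parity complex $\cF^\IW_\varsigma$ on the affine Grassmannian $\Gr$. This is not obviously the same as your proposed functor, and in any case the paper's choice is what makes the subsequent argument go through.

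Second, and more seriously: your proposed proof that the functor preserves indecomposability --- a d\'evissage on the length of $A$ using ``the known recursion for the $p$-canonical basis'' --- will not work. There is no usable combinatorial recursion for the $p$-canonical basis; that is precisely what makes it hard. The paper instead proves that $\Phi$ is \emph{full} (though not faithful). Fullness between Krull--Schmidt categories forces indecomposables to indecomposables, since a quotient of a local ring is local. The fullness is established by an adjunction trick: using $\pi^*\cong\pi^![-2\ell(\wf)]$ for $\pi:\Fl\to\Gr$, one reduces to showing that the analogous functor $\Psi$ on $\Gr$ (convolution with $\cF^\IW_\varsigma$) is full on parity complexes. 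This in turn follows from two substantial geometric inputs: the theorem of Mautner--Riche that for $p$ good the indecomposable $G^\vee_\scO$-equivariant parity sheaves on $\Gr$ are perverse, and the theorem of Bezrukavnikov--Gaitsgory--Mirkovi\'c--Riche--Rider that $\Psi$ restricts to an equivalence on perverse sheaves and sends these parity sheaves to tilting objects (whence there are no higher Homs on the target side).

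Finally, your identification of where ``$p$ good'' enters is off: parity complexes and base change are fine for all $p$. Goodness of $p$ is used precisely for the Mautner--Riche perversity statement on $\Gr$, without which the fullness argument collapses.
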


\begin{rmk}\phantomsection
\label{rmk:intro-Hext}
  \begin{enumerate}
   \item 
  In the body of the paper, we find it more convenient to work with
  the \emph{extended} affine Hecke algebra and its
  spherical/antispherical modules. This allows us to remove some of
  the twistings above.
   \item
   \label{it:thm-classical}
   Our proof of Theorem~\ref{thm:intro-phi} also applies to the
   Kazhdan--Lusztig bases, and shows that for any $A \in
   \mathscr{A}^+_\rho$ we have $\varphi_\rho(\uM^\rho_A) =
   \uN_A$. This fact could have been stated many years ago (since it
   does not involve the $p$-canonical bases in any way), but seems to
   be new. A direct combinatorial proof of this formula (explained to
   us by Wolfgang Soergel) is provided in Section~\ref{sec:comb-proof}.
  \end{enumerate}
\end{rmk}

Theorem~\ref{thm:intro-phi} is obtained as a ``combinatorial trace'' of a statement of categorical nature. Namely, the modules $\Msph_{\overline{\rho}}$ and $\Masph$ can be ``categorified'' via some categories of parity complexes (in the sense of~\cite{jmw}) on the affine flag variety of the Langlands dual group $G^\vee$: $\Msph_{\overline{\rho}}$ corresponds to (twisted) \emph{spherical} 
parity complexes, while $\Masph$ corresponds to
\emph{Iwahori--Whittaker}\footnote{This construction is a
  ``finite-dimensional'' and geometric counterpart for the classical
  (and very useful!) Whittaker constructions in representation theory
  of $p$-adic groups; see in particular~\cite{bbm,by,bgmrr}.} parity
complexes. The morphism $\varphi_\rho$ can also be categorified by a
functor $\Phi_\rho$, given by convolution with a certain object. We
then prove that this functor is full (but \emph{not} faithful); in
particular, as both categories involved are Krull-Schmidt, it must send indecomposable objects to indecomposable objects, and Theorem~\ref{thm:intro-phi} follows. This fullness result is deduced from a similar result in the case where the affine flag variety is replaced by the affine Grassmannian of $G^\vee$ (in which case the corresponding functor is even an equivalence of categories) proved recently by the first author with R. Bezrukavnikov, D. Gaitsgory, I. Mirkovi{\'c} and L. Rider, see~\cite{bgmrr}.

Once Theorem~\ref{thm:intro-phi} is proved, we deduce
Theorem~\ref{thm:main-intro} from the fact that the $p$-canonical
basis of $\Masph$ encodes the characters of indecomposable tilting
$G$-modules (as proved in joint work with P. Achar and
S. Makisumi~\cite{amrw}) and a result of
Jantzen~\cite{jantzen-darstellungen} (in the interpretation of
Donkin~\cite{donkin}) saying that if the highest weight of an
indecomposable tilting module is of the form $(p-1)\rho + \mu$ with
$\mu$ dominant and $p$-restricted, then this tilting module is indecomposable as a
$G_1 T$-module. It is well known (and easy to check) that these modules are projective over $G_1 T$; this therefore provides a useful relation between indecomposable tilting modules for $G$ and indecomposable projective modules for $G_1 T$, which allows us to deduce Theorem~\ref{thm:main-intro} from Theorem~\ref{thm:intro-phi} and the results of~\cite{amrw}.

\begin{rmk}
\label{rmk:Donkin}
 A conjecture by Donkin states that Jantzen's result recalled above
 should be true in any characteristic, which would imply that
 Theorem~\ref{thm:main-intro} holds as soon as $p>h$. Very recent work
 of Bendel--Nakano--Pillen--Sobaje~\cite{bnps} shows that this
 conjecture is not true in full generality. It is currently not known whether the condition that $p>h$ is sufficient to ensure that this conjecture holds (which would be sufficient for our purposes).
\end{rmk}

\subsection{Computational complexity}

Let us return briefly to our earlier claim that our formula 
is potentially useful in practice. In all known algorithms to compute
a $p$-canonical basis element $\puH_w$, $\puN_w$, $\puM_w$
etc.~the computational complexity is exponential in the length
of $w$. For a character formula involving these data to be computable
in practice, it must therefore only involve elements whose length is
small (or at least as small as possible). In the following, we compare the lengths involved in the computation based directly on the
approach of Andersen~\cite{andersen2} (see also \cite{sobaje}) and our formula.

We use $G = \Sp_4$ as a running example.\footnote{This choice is made only so that we can draw pictures. The case
  of $G = \Sp_4$ can easily be settled by classical techniques,
  e.g. the Jantzen sum formula.} Here are the first few dominant alcoves:
\begin{equation*}
  \begin{array}{c}
  \begin{tikzpicture}[scale=0.7]
    \begin{scope}
      \clip(-.1,-.1) rectangle (4.5,4.5);
      \draw[fill=white!90!black] (0,0) -- (0,2) -- (1,3) -- (1,1) -- (0,0);
    \draw (0,0) -- (0,8);
    \draw (1,1) -- (1,8);
    \draw (2,2) -- (2,8);
        \draw (3,3) -- (3,8);
        \draw (4,4) -- (4,8);
            \draw (5,5) -- (5,8);
        \draw (6,6) -- (6,8);
        \draw (7,7) -- (7,8);
\draw (0,0) -- (8,8);
\draw (0,2) -- (6,8);
\draw (0,4) -- (4,8);
\draw (0,6) -- (2,8);
\foreach \x in {1,2,...,8}{
  \draw (0,\x) -- (\x,\x);}
\foreach \x in {1,2,...,7}{
  \draw (\x,\x) -- (0,2*\x);}
\end{scope}
\end{tikzpicture}
 \end{array}
\end{equation*}
By Steinberg's tensor product theorem, it is enough to know the
characters of the simple modules corresponding to the shaded alcoves.

As we explained above, Brauer--Humphrey's reciprocity combined with a
result of Donkin (assuming $p \ge 2h-2$) allows us to rephrase this question in terms of the
characters of indecomposable tilting modules indexed by the following shaded alcoves:
\begin{equation*}
  \begin{array}{c}
  \begin{tikzpicture}[scale=0.7]
    \begin{scope}
      \clip(-.1,-.1) rectangle (6.5,6.5);
      \draw[fill=white!90!black] (1,3) -- (1,5) -- (2,6) -- (2,4) -- (1,3);
    \draw (0,0) -- (0,8);
    \draw (1,1) -- (1,8);
    \draw (2,2) -- (2,8);
        \draw (3,3) -- (3,8);
        \draw (4,4) -- (4,8);
            \draw (5,5) -- (5,8);
        \draw (6,6) -- (6,8);
        \draw (7,7) -- (7,8);
\draw (0,0) -- (8,8);
\draw (0,2) -- (6,8);
\draw (0,4) -- (4,8);
\draw (0,6) -- (2,8);
\foreach \x in {1,2,...,8}{
  \draw (0,\x) -- (\x,\x);}
\foreach \x in {1,2,...,7}{
  \draw (\x,\x) -- (0,2*\x);}
\node at (1.33,3.66) {\small $A$}; 
\node at (1.66,5.33) {\small $D$};
\end{scope}
\end{tikzpicture}
 \end{array}
\end{equation*}
The main theorem of \cite{amrw} (see also \cite{rw-smith}) asserts that
the character of the indecomposable tilting module attached to an alcove $A \in \scA^+$
is calculated by the element $\puN_w$ for $w \in W$ such that $w(\Afund)=A$.

This looks innocent enough in this example, however for a general
group the smallest and largest alcoves in this set (marked $A$ and $D$
above) correspond to elements of $\fW$ of lengths
\begin{equation} \label{eq:orig}
2 \langle \rho^\vee, \rho \rangle = \sum_{\alpha \in \Delta^+} \height(\alpha)
\quad \text{and} \quad  4 \langle \rho^\vee, \rho \rangle - \ell(\wf) = 2
\left( \sum_{\alpha \in \Delta^+} \height(\alpha) \right) - \ell(\wf).
\end{equation}
(Here $\Delta^+ \subset \Delta$ denotes the subset of positive roots, $\height$ denotes 
height, $\rho^\vee$ denotes the halfsum of the positive
coroots and $\wf$ denotes the longest element of $\Wf$. This formula is easily deduced from standard formulas for the
length function in affine Weyl groups, see \cite[Proposition~1.23]{im}.) These numbers grow
fast in the rank of the group, and soon exceed current computational
capability.

The main technical result of this paper (Theorem \ref{thm:intro-phi}) says that the span of the $p$-canonical basis
elements indexed by the following shaded alcoves
\begin{equation*}
  \begin{array}{c}
  \begin{tikzpicture}[scale=0.7]
    \begin{scope}
      \clip(-.1,-.1) rectangle (6.5,6.5);
      \draw[fill=white!90!black] (1,3) -- (1,8) -- (5,7) -- (1,3);
    \draw (0,0) -- (0,8);
    \draw (1,1) -- (1,8);
    \draw (2,2) -- (2,8);
        \draw (3,3) -- (3,8);
        \draw (4,4) -- (4,8);
            \draw (5,5) -- (5,8);
        \draw (6,6) -- (6,8);
        \draw (7,7) -- (7,8);
\draw (0,0) -- (8,8);
\draw (0,2) -- (6,8);
\draw (0,4) -- (4,8);
\draw (0,6) -- (2,8);
\foreach \x in {1,2,...,8}{
  \draw (0,\x) -- (\x,\x);}
\foreach \x in {1,2,...,7}{
  \draw (\x,\x) -- (0,2*\x);}
\end{scope}
\end{tikzpicture}
  \end{array}
\end{equation*}
in the anti-spherical module $\Masph$ yield a submodule isomorphic to
the (twisted) spherical module $\Msph_{\overline{\rho}}$, and moreover that this inclusion
preserves the $p$-canonical basis. In other words, for the tilting
characters that concern us above, it is enough to calculate the
$p$-canonical basis elements for the following shaded alcoves in the
(twisted) spherical module:
\begin{equation*}
  \begin{array}{c}
  \begin{tikzpicture}[scale=0.7]
    \begin{scope}
      \clip(-.1,-.1) rectangle (4.5,4.5);
      \draw[fill=white!90!black] (0,0) -- (0,2) -- (1,3) -- (1,1) -- (0,0);
    \draw (0,0) -- (0,8);
    \draw (1,1) -- (1,8);
    \draw (2,2) -- (2,8);
        \draw (3,3) -- (3,8);
        \draw (4,4) -- (4,8);
            \draw (5,5) -- (5,8);
        \draw (6,6) -- (6,8);
        \draw (7,7) -- (7,8);
\draw (0,0) -- (8,8);
\draw (0,2) -- (6,8);
\draw (0,4) -- (4,8);
\draw (0,6) -- (2,8);
\foreach \x in {1,2,...,8}{
  \draw (0,\x) -- (\x,\x);}
\foreach \x in {1,2,...,7}{
  \draw (\x,\x) -- (0,2*\x);}
\end{scope}
\end{tikzpicture}
 \end{array}
\end{equation*}
This is computationally a much simpler prospect, because the
elements involved are shorter; the elements we have to consider range
from length 0 to
\begin{equation} \label{eq:improve}
2 \langle \rho^\vee, \rho \rangle - \ell(\wf) = \left( \sum_{\alpha \in \Delta^+} \height(\alpha)
\right) - \ell(\wf).\end{equation}
(That is, we have improved \eqref{eq:orig} by a constant factor of $2
\langle \rho^\vee, \rho \rangle$.)

\subsection{Acknowledgements}

Part of the work on this paper was done while the first author was a
member of the Freiburg Institute for Advanced Studies, as part of the
Research Focus ``Cohomology in Algebraic Geometry and Representation
Theory'' led by A. Huber--Klawitter, S. Kebekus and W. Soergel. We
thank H.~H.~Andersen, G.~Lusztig and the referees for their
helpful suggestions. A special thanks to W.~Soergel for
providing the proof explained in Section~\ref{sec:comb-proof}.

\section{The periodic module}
\label{sec_pm}


\subsection{Extended affine Weyl group}
\label{ss:Wext}

As in~\S\ref{ss:intro-G1T} we consider a connected reductive algebraic group $G$ with simply-connected derived subgroup over an algebraically closed field $\bk$ of characteristic $p>0$, with a fixed choice of maximal torus and Borel subgroup $T \subset B \subset G$. We set $\bX:=X^*(T)$, and denote by $\Delta \subset \bX$ the root system of $(G,T)$, by $\Delta^+ \subset \Delta$ the positive system consisting of the opposites of the $T$-weights in the Lie algebra of $B$, by $\Sigma \subset \Delta$ the corresponding subset of simple roots, by $\Wf$ and $W=\Wf \ltimes \Z\Delta$ the Weyl group and the affine Weyl group, and finally by $\Sf$ and $S$ their subsets of simple roots, see e.g.~\cite[Chap.~6]{jantzen}. (Here, ``$\mathrm{f}$'' stands for ``finite.'')

By a result of Iwahori--Matsumoto~\cite{im},
the length function associated with the Coxeter system $(\Waff,\Saff)$ satisfies
\begin{equation}
\label{eqn:length} 
\ell(w \cdot t_\lambda)=\sum_{\substack{\alpha \in \Delta^+ \\ w(\alpha) \in \Delta^+}} |\langle \lambda,
\alpha^\vee \rangle | + \sum_{\substack{\alpha \in
      \Delta^+ \\ w(\alpha) \in -\Delta^+}}
  |1 + \langle \lambda, \alpha^\vee \rangle |
\end{equation}
for $w \in \Wf$ and $\lambda \in \Z\Delta$. (Here, to avoid confusion, the image of $\lambda \in \Z\Delta$ in $\Waff$ is denoted $t_\lambda$.)
We will also  consider the \emph{extended} affine Weyl group
\[
\Wext:=\Wf \ltimes \bX.
\]
The formula~\eqref{eqn:length} defines a function $\ell : \Wext \to \Z$, and we set $\Omega:=\{w \in \Wext \mid \ell(w)=0\}$. Then it is known that $\Omega$ is a subgroup of $\Wext$, and that multiplication induces a group isomorphism
\[
 \Omega \ltimes \Waff \simto \Wext.
\]
More precisely, conjugation by any element of $\Omega$ defines a Coxeter group automorphism of $\Waff$; in other words it stabilizes $\Saff$ (hence preserves lengths). It is known also that
the composition
\begin{equation}
\label{eqn:isom-Omega-fundgp}
 \Omega \hookrightarrow \Wext = W \ltimes \bX \twoheadrightarrow \bX \twoheadrightarrow \bX/\Z\Delta
\end{equation}
is a group isomorphism. (In particular, $\Omega$ is abelian.)

We consider the action of $\Waff$ and $\Wext$ on $V:=\mathbb{R} \otimes_\Z \bX$ as in~\cite[Section~4]{soergel-comb-tilting},  and denote by $\mathscr{A}$ the set of alcoves in $V$ (i.e. the connected components of the complement of the union of the reflection hyperplanes associated with the action of $\Waff$) and by $\Afund \in \mathscr{A}$ the fundamental alcove, defined as
\[
\Afund = \{v \in V \mid \forall \alpha \in \Delta^+, 0 < \langle v,\alpha^\vee \rangle < 1\}.
\]
The action of $\Wext$ on $V$ preserves alcoves, hence induces an action on $\mathscr{A}$. Moreover the assignment $w \mapsto w\Afund$ induces a bijection
$W \simto \mathscr{A}$,
see~\eqref{eqn:W-alcoves}.
If we denote by $\mathscr{A}^+ \subset \mathscr{A}$ the subset of alcoves contained in the dominant chamber (denoted $\mathcal{C}$ in~\cite{soergel-comb-tilting}), then this bijection restricts to a bijection
\begin{equation}
\label{eqn:bijection-fW-A+}
\fW \xrightarrow{\sim} \mathscr{A}^+
\end{equation}
where $\fW$ is as in~\S\ref{ss:intro-sph-asph}.

\begin{rmk}
\label{rmk:Omega}
 The subset $\Omega \subset \Wext$ can be characterized as consisting of the elements $w \in \Wext$ such that $w(\Afund)=\Afund$. For any $\lambda \in \bX$, the subset $\lambda+\Afund$ is an alcove, so that there exists $x_\lambda \in \Waff$ such that $x_\lambda(\Afund)=\lambda+\Afund$, see~\eqref{eqn:W-alcoves}. Then $\omega_\lambda=(x_\lambda)^{-1} \cdot t_\lambda$ belongs to $\Omega$, and has image $\lambda + \Z\Delta$ under~\eqref{eqn:isom-Omega-fundgp}. This procedure realizes the isomorphism inverse to~\eqref{eqn:isom-Omega-fundgp}. It also allows us to associate to any $\lambda \in \bX$ a Coxeter group automorphism $\tau_\lambda$ of $\Waff$, given by conjugation by $\omega_\lambda$ in $\Wext$. With this notation, the subset $S_{\overline{\lambda}}$ of~\S\ref{ss:intro-sph-asph} is given by $S_{\overline{\lambda}}=\tau_\lambda(\Sf)$.
\end{rmk}

\subsection{More about alcoves}

Using again the
identification~\eqref{eqn:W-alcoves} we may transport the
Bruhat order on $W$ to obtain a partial order on $\mathscr{A}$, which we also
denote by $\le$. We define the \emph{generic order} on $\mathscr{A}$
by
\[
A \preccurlyeq B \; \text{if $A + m\g \le B + m \g$ for all strictly dominant $\g$ and $m \gg 0$}.
\]
Equivalently, the generic order is uniquely determined by the fact
that it agrees with the Bruhat order on $\mathscr{A}^+$ and is
invariant under translation by $\bX$.

For any $\mu \in \bX$ we will consider the following subsets of $V$:
\begin{gather*}
\check{\Pi}_\mu := \{ v \in V \; | \; \langle \mu, \alpha^\vee \rangle
- 1 < \langle v, \alpha^\vee \rangle \le \langle \mu, \alpha^\vee
\rangle \text{ for all $\a \in \Sigma$} \}, \\
\hat{\Pi}_\mu := \{ v \in V \; | \; \langle \mu, \alpha^\vee \rangle
\le  \langle v, \alpha^\vee \rangle < \langle \mu, \alpha^\vee
\rangle + 1\text{ for all $\a \in \Sigma$} \}.
\end{gather*}
Note that for $\mu,\nu \in \bX$ we have
\begin{equation}
\label{eqn:equality-Pi}
 \check{\Pi}_\mu = \check{\Pi}_\nu \ \Leftrightarrow \ \hat{\Pi}_\mu = \hat{\Pi}_\nu \ \Leftrightarrow \ \bigl( \forall \alpha \in \Delta, \, \langle \mu-\nu, \alpha^\vee \rangle=0 \bigr).
\end{equation}
The set $\hat{\Pi}_0$ is often called the ``fundamental box.'' (It
is denoted $\Pi$ in \cite[\S 4]{soergel-comb-tilting}.) The notation is
intended to suggest that $\check{\Pi}_\mu$ (resp. $\hat{\Pi}_\mu$) is
``the fundamental box below (resp. above) $\mu$.''

Given any alcove $A \in \mathscr{A}$ there exists $\mu \in \bX$ such that
$A \subset \check{\Pi}_\mu$. The stabiliser of $\mu$ in $W$ is then
$t_\mu \Wf t_{-\mu}$. We define
\[
\hat{A} := (t_\mu \wf t_{-\mu})(A),
\]
where $\wf$ is the longest element in $\Wf$.
From~\eqref{eqn:equality-Pi} we see that $\hat{A}$ does not depend on the choice of $\mu$, that $\hat{A} \subset \hat{\Pi}_\mu$, and that $A
\mapsto \hat{A}$ is a bijection $\mathscr{A} \simto \mathscr{A}$. We
denote its inverse by $A \mapsto \check{A}$. (These operations agree
with the maps denoted similarly in~\cite{soergel-comb-tilting}, see~\cite[Comments before Lemma~4.21]{soergel-comb-tilting}.)

\subsection{The periodic module and its canonical basis}
\label{ss:periodic-module}

The \emph{periodic module} $\Per$ is the right $\Haff$-module (where, as in Section~\ref{sec:intro}, $\Haff$ is the Hecke algebra of the Coxeter system $(\Waff,\Saff)$) defined as
follows. As a $\Z[v^{\pm 1}]$-module, $\Per$ is free with basis the set of alcoves:
\[
\Per := \bigoplus_{A \in \mathscr{A}} \Z[v^{\pm 1}]A.
\]
The structure of a right $\Haff$-module on $\Per$ is characterized by the following formulas for $s \in \Saff$:
\begin{equation}
\label{eqn:action-Per}
 A \cdot \uH_s = \begin{cases}
As + vA & \text{if $A \preccurlyeq As$;} \\
As + v^{-1}A & \text{if $As \preccurlyeq A$,}
\end{cases}
\end{equation}
see~\cite[Lemma~4.1]{soergel-comb-tilting}.
(Here, $\uH_s=H_s + v$).

The action of $\bX$ on $\mathscr{A}$ by translations extends to an
action of $\bX$ on $\Per$: namely, given $R = \sum p_A A \in \Per$, set $R +
\mu := \sum p_A (\mu+A)$. This action does not commute with the $\Haff$-action; it rather satisfies the following relation for all $h \in \Haff$:
\begin{equation}
\label{eqn:trans-action}
 (R \cdot h) + \mu = (R + \mu) \cdot \tau_\mu(h),
\end{equation}
where we still denote by $\tau_\mu$ the automorphism of $\Haff$ defined by $\tau_\mu(H_w)=H_{\tau_\mu(w)}$.

As in~\S\ref{ss:intro-statement} we denote by
$\{ \uP_A : A \in \mathscr{A} \}$ the canonical basis
of $\Per$. Then for any $\mu \in \bX$ we have
\begin{gather}
  \label{eq:Ptrans}
\uP_{A + \mu} = \uP_A + \mu  
\end{gather}
(see~\cite[Comments before Proposition~4.18]{soergel-comb-tilting}) and
\begin{equation}
  \label{eq:uPAfund}
  \uP_{\Afund+\mu} = \sum_{x \in \Wf} v^{\ell(x)} \cdot \bigl( x(\Afund)+\mu \bigr)
\end{equation}
(see~\cite[Proof of Proposition~4.16]{soergel-comb-tilting}).

We now recall a crucial observation of Lusztig. Recall the embedding
$\zeta_{\overline{\mu}} : \Msph_{\overline{\mu}} \to \Haff$ from~\eqref{eqn:def-zeta-lambda}. 
By~\cite[Theorem 5.2]{lusztig-patterns}
we then have the following formula, where we set $\pi_{\mathrm{f}} := v^{-\ell(\wf)} \sum_{x \in \Wf} v^{\ell(x)}$.

\begin{lem}
\label{eq:LusztigLemma}
Let $A \in \mathscr{A}$, let $\mu \in \bX$ be such that $A \subset \hat{\Pi}_\mu$, and let $w \in \Waff$ be the unique element such that $(\mu+\Afund) \cdot w = A$. 
Then we have
\[
\uP_{A} = \frac{1}{\pi_{\mathrm{f}}} \uP_{\Afund+\mu} \cdot \zeta_{\overline{\mu}} (\uM^{\overline{\mu}}_w).
\]
\end{lem}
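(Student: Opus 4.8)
The plan is to exploit the two explicit formulas already recorded---equation~\eqref{eq:uPAfund} for $\uP_{\Afund+\mu}$, and the defining property of $\zeta_{\overline{\mu}}$ together with the fact that the $\uM^{\overline{\mu}}_w$ are by construction the preimages of the $\uH_{w_{\overline{\mu}}w}$---and to match both sides against the characterization of the canonical basis element $\uP_A$ of $\Per$. Concretely, I would first unwind the right-hand side: since $\zeta_{\overline{\mu}}(\uM^{\overline{\mu}}_w) = \uH_{w_{\overline{\mu}}w}$ inside $\Haff$, the element $\uP_{\Afund+\mu}\cdot\zeta_{\overline{\mu}}(\uM^{\overline{\mu}}_w)$ is simply $\uP_{\Afund+\mu}\cdot\uH_{w_{\overline{\mu}}w}$, and by~\eqref{eqn:trans-action} I can translate everything to the case $\mu=0$ at the cost of applying the Coxeter automorphism $\tau_\mu$, so it suffices to prove the identity $\uP_{\check A} \cdot$(something) $= \pi_{\mathrm f}\uP_A$ in the untwisted situation and then transport. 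The factor $\pi_{\mathrm f}=v^{-\ell(\wf)}\sum_{x\in\Wf}v^{\ell(x)}$ is exactly the ``mass'' appearing in~\eqref{eq:uPAfund}, which is the sign that $\uP_{\Afund+\mu}$ behaves like $\pi_{\mathrm f}$ times an idempotent-type element for the finite Hecke algebra acting on the left of $\Per$.

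Second, I would set up the comparison with Lusztig's original statement~\cite[Theorem 5.2]{lusztig-patterns}. The cited theorem gives a formula for $\uP_A$ in terms of the canonical basis of the (parabolic) periodic/antispherical-type module; the content here is to rewrite that formula in the notation of~\cite{soergel-comb-tilting}, using the dictionary ``$\uM^{\overline\mu}_w$ is the Kazhdan--Lusztig basis element of $\Msph_{\overline\mu}$ corresponding to $\uH_{w_{\overline\mu}w}$'' (established in~\S\ref{ss:intro-sph-asph}) and the dictionary between alcoves in $\hat\Pi_\mu$ and the right $\Waff$-action $w\mapsto(\mu+\Afund)\cdot w$. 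So the bulk of the work is a translation/bookkeeping argument: one checks that the element of $\Per$ produced by Lusztig's recursion, when re-expressed through these identifications, is precisely $\tfrac{1}{\pi_{\mathrm f}}\uP_{\Afund+\mu}\cdot\zeta_{\overline\mu}(\uM^{\overline\mu}_w)$. One delicate input is to see that the right-hand side actually lies in $\Per$ (and not merely in $\Per\otimes\Q(v)$) despite the division by $\pi_{\mathrm f}$: this follows because $\uP_{\Afund+\mu}$ is divisible by $\pi_{\mathrm f}$ in the appropriate sense---indeed $\tfrac{1}{\pi_{\mathrm f}}\uP_{\Afund+\mu} = v^{\ell(\wf)}\sum_{x\in\Wf}v^{\ell(x)}\cdot\big(x(\Afund)+\mu\big)\big/\sum_{x}v^{\ell(x)}$ should be interpreted via the left action of $\uH_{\wf}$-type elements, and one must verify this lands in $\Per$ after multiplying by $\uH_{w_{\overline\mu}w}$.

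Third, I would pin down well-definedness: $\uP_A$ depends only on $A$, and the right-hand side must be shown independent of the choice of $\mu$ with $A\subset\hat\Pi_\mu$. By~\eqref{eqn:equality-Pi} two such choices $\mu,\nu$ differ by an element orthogonal to all coroots, i.e.\ $\mu-\nu$ lies in the ``central'' part, so $\overline\mu=\overline\nu$, $w$ is unchanged, and $\uP_{\Afund+\mu}$ and $\uP_{\Afund+\nu}$ differ by the translation $\mu-\nu$ which commutes appropriately with the rest by~\eqref{eq:Ptrans} and~\eqref{eqn:trans-action}; hence the formula is consistent. Putting these together---translate to $\mu=0$ via $\tau_\mu$, match with~\cite[Theorem 5.2]{lusztig-patterns} under the Soergel-conventions dictionary, and verify integrality and independence of $\mu$---yields the lemma.

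The main obstacle I anticipate is not any single hard estimate but the faithful translation between Lusztig's conventions in~\cite{lusztig-patterns} and Soergel's in~\cite{soergel-comb-tilting}: normalizations of the periodic module, of the canonical basis, of the parabolic (spherical) module, of $\hat{A}$ versus $\check A$, and of the twist by $\tau_\mu$ all have to be lined up exactly, and a sign or a $v^{\pm\ell(\wf)}$ out of place invalidates the identity. The cleanest route is probably to take Lusztig's Theorem~5.2 as the definition-level input, check it for the base case $A=\Afund$ (where it reduces to~\eqref{eq:uPAfund} and the statement that $\uM^{\overline 0}_e$ maps to $\uH_{\wf}$), and then observe that both sides satisfy the same recursion under the right $\uH_s$-action using~\eqref{eqn:action-Per}, so that they agree on all of $\mathscr{A}$ by induction on the Bruhat/generic order.
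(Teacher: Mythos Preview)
The paper does not actually give a proof of this lemma: it is stated as a direct citation of \cite[Theorem~5.2]{lusztig-patterns}, with no further argument. Your proposal is therefore strictly more than what the paper provides---you are sketching the translation work that the paper tacitly leaves to the reader. Your plan to unwind $\zeta_{\overline\mu}(\uM^{\overline\mu}_w) = \uH_{w_{\overline\mu}w}$, reduce to $\mu=0$ via~\eqref{eqn:trans-action}, and then compare conventions with Lusztig's original formulation is exactly the right dictionary-building exercise. The well-definedness check in your third paragraph is not strictly needed for the lemma as stated (which fixes one $\mu$), but it is precisely the content of Remark~\ref{rmk:Lusztig-formula-translation} and of property~(2) listed after the definition of $\puP_A$, so it is useful to have.

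One caution on the alternative inductive route you suggest at the end. The canonical basis $\uP_A$ is characterized not by a simple recursion under right multiplication by $\uH_s$, but by self-duality together with a triangularity condition with respect to the standard basis (see \cite[Theorem~4.3]{soergel-comb-tilting}). So ``both sides satisfy the same recursion'' is not quite enough: you would need to show that the right-hand side $\tfrac{1}{\pi_{\mathrm f}}\uP_{\Afund+\mu}\cdot\uH_{w_{\overline\mu}w}$ is self-dual under the bar involution on $\Per$ and has leading term $A$ with the correct normalization. Verifying self-duality of this product requires knowing how the bar involution on $\Per$ interacts with right multiplication by $\uH_{w_{\overline\mu}w}$ and with the element $\uP_{\Afund+\mu}$---which is essentially the substance of Lusztig's original argument, not a way around it. Your primary strategy (careful translation of \cite[Theorem~5.2]{lusztig-patterns}) is the cleaner one.
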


\begin{rmk}
\label{rmk:Lusztig-formula-translation}
The formula in Lemma~\ref{eq:LusztigLemma} is compatible with~\eqref{eq:Ptrans} in view of~\eqref{eqn:trans-action}.
\end{rmk}

\subsection{The $p$-canonical basis of the periodic module}

The usual procedure to define a $p$-canonical basis of an $\Haff$-module (see~\cite{jw, rw, ar-survey}) is to start with a categorification of this module in terms of a $\mathbb{C}$-linear category (in practice, either via some diagrammatic category or some category of parity complexes) such that the classes of indecomposable objects correspond to the Kazhdan--Lusztig basis, and then to replace (in some appropriate way) the coefficients $\mathbb{C}$ by a field of characteristic $p$. In the case of the periodic module, the known categorifications involve semi-infinite geometry, and are beyond the authors' present understanding of the subject. So we will use a different strategy to define this basis: we will start with the formula of Lemma~\ref{eq:LusztigLemma}, and replace there the canonical basis of $\Msph_{\overline{\mu}}$ by the $p$-canonical version. We expect that any reasonable categorification of $\Per$ with characteristic-$p$ coefficients should provide the same basis as the one constructed here.

Namely, for $A \in \mathscr{A}$, we choose $\mu \in \bX$ such that $A \subset \hat{\Pi}_\mu$, and let $w \in \Waff$ be the unique element such that $(\mu+\Afund) \cdot w = A$. 
Then we set
\[
\puP_{A} := \frac{1}{\pi_{\mathrm{f}}} \uP_{\Afund+\mu} \cdot \zeta_{\overline{\mu}} (\puM^{\overline{\mu}}_w).
\]
The following properties are easy to check (using in particular~\eqref{eqn:trans-action} and the fact that $\tau_\mu=\tau_\nu$ if $\hat{\Pi}_\mu=\hat{\Pi}_\nu$, as follows from~\eqref{eqn:equality-Pi}):
\begin{enumerate}
 \item
 for any $h \in \Msph_{\overline{\mu}}$ the element $\uP_{\Afund+\mu} \cdot \zeta_\mu(h)$ belongs to $\pi_{\mathrm{f}} \cdot \Per$, so that $\puP_{A}$ belongs to $\Per$;
 \item
 the element $\puP_{A}$ does not depend on the choice of $\mu$;
 \item
 for any $\nu \in \bX$ we have
 \begin{equation}
 \label{eqn:puP-trans}
 \puP_{A+\nu} = \puP_A + \nu.
 \end{equation}
\end{enumerate}
It can also be shown (although this is less obvious, and will not be proved here) that for any alcove $A$ we have
\[
 \puP_A \in \sum_{B \in \mathscr{A}} \Z_{\geq 0}[v^{\pm 1}] \cdot \uP_B.
\]

%

%

\section{The extended affine Hecke algebra and its spherical and antispherical modules}
\label{sec:Hext}

%
%

\subsection{The spherical and antispherical modules}
\label{ss:sph-asph}

We continue with the notation of Section~\ref{sec_pm}, and fix a weight $\varsigma \in \bX$ such that $\langle \varsigma, \alpha^\vee \rangle = 1$ for any $\alpha \in \Sigma$. (Such a weight exists thanks to our assumption on the derived subgroup of $G$. However, it might not be unique.)

As mentioned in Remark~\ref{rmk:intro-Hext}, to avoid difficulties related to the twists $\tau_\lambda$, it will be more convenient to work with
the Hecke algebra $\Hext$ associated with the ``quasi-Coxeter'' group $\Wext$ (see~\S\ref{ss:Wext}), i.e.~the $\Z[v^{\pm 1}]$-algebra with a ``standard'' basis consisting of elements $(H_w : w \in \Wext)$, with multiplication characterized by the following relations:
\begin{enumerate}
\item
$(H_s + v) \cdot (H_s-v^{-1})=0$ for $s \in \Saff$;
\item
$H_x \cdot H_y = H_{xy}$ if $x,y \in \Wext$ and $\ell(xy)=\ell(x)+\ell(y)$.
\end{enumerate}

The algebra $\Hext$ contains $\Haff$ as a subalgebra (spanned by the elements $H_w$ with $w \in \Waff$).
Inducing from $\Hf$ to $\Hext$ the modules considered in~\S\ref{ss:intro-sph-asph}, we obtain the right $\Hext$-modules 
\[
 \Msphext := \triv_0 \otimes_{\Hf} \Hext \quad \text{and} \quad \Masphext := \sgn \otimes_{\Hf} \Hext,
\]
which are called the \emph{spherical} and \emph{antispherical} module respectively.

We denote by $\fWext \subset \Wext$ the subset consisting of elements $w$ which are of minimal length in the coset $\Wf w$ (in other words, of the form $w \omega$ with $w \in \fW$ and $\omega \in \Omega$). Then for $w \in \fWext$ we set
\[
 M_w := 1 \otimes H_w \in \Msphext, \qquad N_w := 1 \otimes H_w \in \Masphext.
\]
The collections $(M_w : w \in \fWext)$ and $(N_w : w \in \fWext)$ are $\Z[v^{\pm 1}]$-bases of $\Msph$ and $\Masph$ respectively (called again the \emph{standard} bases). Of course there are natural embeddings $\Msph \hookrightarrow \Msph_\ext$ and $\Masph \hookrightarrow \Masph_\ext$, such that the elements denoted $M_w$, resp.~$N_w$, in~\S\ref{ss:intro-sph-asph} (see Remark~\ref{rmk:notation-p=0}) correspond to the elements denoted similarly here.

\begin{rmk}
\label{rmk:Msph-twist-ext}
 Let $\lambda \in \bX$, and consider the associated element
 $\omega_\lambda \in \Omega$, see Remark~\ref{rmk:Omega}. Then the map
 $h \mapsto H_{\omega_\lambda^{-1}} \cdot h$ induces an embedding of
 right $\Haff$-modules $\Msph_{\overline{\lambda}} \hookrightarrow \Msphext$. In
 this way, one can consider $\Msphext$ as the result of ``gluing
 together'' all the modules $\Msph_{\overline{\lambda}}$ from the introduction.
\end{rmk}

\subsection{Kazhdan--Lusztig and $p$-canonical bases}

Via the natural embeddings $\Haff \hookrightarrow \Hext$, $\Msph \hookrightarrow \Msphext$ and $\Masph \hookrightarrow \Masphext$, the Kazhdan--Lusztig and $p$-canonical bases of $\Haff$, $\Msph$ and $\Masph$ define families of elements in $\Hext$, $\Msphext$ and $\Masphext$. We complete these families into bases by setting, for $w \in \Waff$ and $\omega \in \Omega$,
\begin{gather*}
 \uH_{w \omega} := \uH_w \cdot H_\omega, \qquad \puH_{w \omega} := \puH_w \cdot H_\omega,\\
 \uM_{w \omega} := \uM_w \cdot H_\omega, \qquad \puM_{w \omega} := \puM_w \cdot H_\omega,\\
 \uN_{w \omega} := \uN_w \cdot H_\omega, \qquad \puN_{w \omega} := \puN_w \cdot H_\omega.
\end{gather*}
(It can be easily checked that we also have $\uH_{\omega w} = H_\omega \uH_w$ and $\puH_{\omega w} = H_\omega \puH_w$ for any $\omega \in \Omega$ and $w \in \Waff$.)

%

Let us recall the following well-known property of the Kazhdan--Lusztig basis.

\begin{lem}
\label{lem:uH-mult-smaller}
 Let $w \in \Wext$ and $s \in \Saff$. If $\ell(ws)<\ell(w)$, then
 \[
  \uH_w \cdot \uH_s = (v+v^{-1}) \cdot \uH_w.
 \]
\end{lem}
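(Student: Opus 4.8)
The statement to prove is Lemma~\ref{lem:uH-mult-smaller}: for $w \in \Wext$ and $s \in \Saff$ with $\ell(ws) < \ell(w)$, one has $\uH_w \cdot \uH_s = (v+v^{-1}) \cdot \uH_w$. The plan is first to reduce to the case $w \in \Waff$. Writing $w = w'\omega$ with $w' \in \Waff$ and $\omega \in \Omega$, conjugation by $\omega$ is a Coxeter automorphism of $\Waff$, so $s' := \omega s \omega^{-1} \in \Saff$ and $\ell(w's') = \ell(ws) < \ell(w) = \ell(w')$; using $\uH_w = \uH_{w'} H_\omega$ and $H_\omega \uH_s = H_\omega \uH_s H_\omega^{-1} H_\omega = \uH_{s'} H_\omega$ (which follows from $H_\omega H_s H_\omega^{-1} = H_{s'}$ and $H_\omega v H_\omega^{-1} = v$), the identity for $w$ follows from the identity for $w' \in \Waff$. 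So it suffices to treat $w \in \Waff$.

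For $w \in \Waff$ with $\ell(ws) < \ell(w)$, I would argue as follows. Since $\ell(ws) < \ell(w)$ one can write $\uH_w = \uH_{w s} \cdot \uH_s + (\text{lower terms})$; more precisely, in the Hecke algebra $\Haff$ one has the standard identity $\uH_{ws}\uH_s = \uH_w + \sum_{z < w,\ zs < z} \mu(z,ws) \uH_z$ for the Kazhdan--Lusztig basis. Thus $\uH_w$ lies in the right ideal generated by $\uH_s$, and since $\uH_s^2 = (v+v^{-1})\uH_s$ (from $(H_s+v)(H_s - v^{-1}) = 0$, i.e.\ $H_s^2 = (v^{-1}-v)H_s + 1$, which gives $\uH_s^2 = (v+v^{-1})\uH_s$), every element of that right ideal is an eigenvector of right multiplication by $\uH_s$ with eigenvalue $v+v^{-1}$. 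Concretely: $\uH_w = \bigl(\uH_{ws}\uH_s - \sum \mu(z,ws)\uH_z\bigr)$, and each $\uH_z$ appearing has $zs < z$, so by downward induction on length each such $\uH_z$ already satisfies $\uH_z \uH_s = (v+v^{-1})\uH_z$; hence $\uH_w\uH_s = \uH_{ws}\uH_s^2 - \sum \mu(z,ws)\uH_z\uH_s = (v+v^{-1})\bigl(\uH_{ws}\uH_s - \sum \mu(z,ws)\uH_z\bigr) = (v+v^{-1})\uH_w$. The base case (minimal $w$ with $ws < w$, namely $w = s$) is exactly $\uH_s^2 = (v+v^{-1})\uH_s$.

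The main obstacle, such as it is, is purely bookkeeping: one must be careful that the sum $\sum_{z<ws} \mu(z,ws)\uH_z$ entering $\uH_{ws}\uH_s$ really only involves $z$ with $zs < z$ (this is the defining property of the $\mu$-function in the multiplication rule $\uH_y \uH_s$), so that the inductive hypothesis applies to each of them; this is standard (see e.g.\ the Kazhdan--Lusztig multiplication formulas as recalled in~\cite{soergel-comb-tilting}). An alternative, even shorter route avoiding induction: it is classical that $\uH_w \uH_s$ depends only on whether $ws > w$ or $ws < w$, and in the latter case equals $(v+v^{-1})\uH_w$ — this is immediate from the fact that $\{\uH_y : ys < y\}$ is exactly the set of $\uH_y$ lying in $\Haff \uH_s$, together with $\uH_s^2 = (v+v^{-1})\uH_s$. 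Either way the proof is a few lines; the only thing genuinely needing the hypothesis $\ell(ws)<\ell(w)$ is the assertion $\uH_w \in \Haff\uH_s$, and the only ring-theoretic input is the quadratic relation rewritten as $\uH_s^2 = (v+v^{-1})\uH_s$.
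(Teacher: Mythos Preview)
Your proof is correct. The paper does not actually prove this lemma: it is introduced with ``Let us recall the following well-known property of the Kazhdan--Lusztig basis'' and stated without argument. Your reduction from $\Wext$ to $\Waff$ via $w = w'\omega$ and $H_\omega \uH_s = \uH_{\omega s \omega^{-1}} H_\omega$ is exactly right, and the inductive argument in $\Haff$ using the Kazhdan--Lusztig multiplication formula together with $\uH_s^2 = (v+v^{-1})\uH_s$ is the standard justification of this fact. (One small wording issue: what you call ``downward induction on length'' is really ordinary induction on $\ell(w)$ --- the terms $\uH_z$ appearing have $\ell(z) < \ell(w)$, so the inductive hypothesis applies to them.) The alternative one-line route you sketch at the end, via $\uH_w \in \Haff\,\uH_s$ whenever $ws<w$, is equally valid and is how most references summarize the argument.
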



As in the setting of~\S\ref{ss:intro-sph-asph} we have an $\Hext$-module morphism
\[
 \xi : \Hext \to \Masphext
\]
defined by $\xi(h)= N_{\id} \cdot h$. This morphism is clearly surjective; moreover, for $w \in \Wext$ we have
\[
 \xi(\uH_w) = \begin{cases}
               \uN_w & \text{if $w \in \fWext$;} \\
               0 & \text{otherwise,}
              \end{cases} \qquad
               \xi(\puH_w) = \begin{cases}
               \puN_w & \text{if $w \in \fWext$;} \\
               0 & \text{otherwise}
              \end{cases}
\]
(see~\cite{rw} for details).

Now, let $\wf$ be the longest element in $\Wf$. Consider the endomorphism of $\Hext$ (as a right $\Hext$-module) sending $h$ to $\uH_{\wf} \cdot h$. It follows from Lemma~\ref{lem:uH-mult-smaller} that this morphism factors through a morphism
\begin{equation}
\label{eqn:zeta}
 \zeta : \Msphext \to \Hext.
\end{equation}
This morphism is injective, and satisfies
\begin{equation}
 \label{eqn:zeta-puM}
 \zeta(\uM_w)=\uH_{\wf w}, \qquad \zeta(\puM_w)=\puH_{\wf w}
\end{equation}
for any $w \in \fWext$. In particular, for $\omega \in \Omega$ we have
\begin{equation}
\label{eqn:zeta-id}
 \zeta(M_\omega) = \uH_{\wf} \cdot H_\omega = \sum_{z \in \Wf} v^{\ell(\wf)-\ell(z)} H_{z\omega}
\end{equation}
(where the second equality uses~\cite[Proposition~2.9]{soergel-comb-tilting}).

\begin{rmk}
\label{rmk:pcan-p-large}
Recall that for any $w \in \Wext$, there exists $K_w \in \Z_{\geq 0}$ such that $\puH_w = \uH_w$ for any prime number $p$ such that $p \geq K_w$. (However, determining $K_w$ is a very difficult task.) A similar claim holds for the $p$-canonical bases in $\Masphext$ and $\Msphext$.
\end{rmk}

\subsection{Statement}
\label{ss:statement}

We can now state a version of Theorem~\ref{thm:intro-phi} in terms of the \emph{extended} affine Hecke algebra (and for reductive groups which are not necessarily semisimple).

The statement will involve the element considered in the following lemma.
(Here the second equality follows from~\cite[Lemma~5.7]{soergel-comb-tilting}. We will give a (geometric) proof of both equalities in~\S\ref{ss:parity-Gr} below.)

\begin{lem}
\label{lem:rho+Af}
 We have
 \begin{equation}
 \label{eqn:uN-varsigma}
  \puN_{t_\varsigma} = \uN_{t_\varsigma} = \sum_{z \in \Wf} v^{\ell(z)} N_{t_\varsigma \cdot z}.
 \end{equation}
Moreover, for any $s \in \Sf$ we have
\[
 \uN_{t_\varsigma} \cdot \uH_s = (v+v^{-1}) \cdot \uN_{t_\varsigma}.
\]
\end{lem}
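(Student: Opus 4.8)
The plan is to reduce the first equality to a statement about the antispherical module that one can check directly, and to obtain the multiplication identity as a formal consequence. First I would recall that $t_\varsigma \in \fWext$: indeed, by the Iwahori--Matsumoto formula~\eqref{eqn:length} applied with $w=\id$, we have $\ell(t_\varsigma) = \sum_{\alpha \in \Delta^+} |\langle \varsigma, \alpha^\vee\rangle| = \sum_{\alpha \in \Delta^+}\langle \varsigma, \alpha^\vee\rangle$ (using $\langle \varsigma, \alpha^\vee\rangle \geq 1$ for $\alpha \in \Delta^+$, which follows from $\langle \varsigma, \beta^\vee\rangle = 1$ on simple roots and dominance), and a short computation with the same formula shows $\ell(s t_\varsigma) = \ell(t_\varsigma)+1$ for each $s \in \Sf$, so $t_\varsigma$ is minimal in $\Wf t_\varsigma$. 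Hence $\uN_{t_\varsigma} = \xi(\uH_{t_\varsigma})$ and $\puN_{t_\varsigma} = \xi(\puH_{t_\varsigma})$ are defined.

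Next I would establish $\puH_{t_\varsigma} = \uH_{t_\varsigma}$. The key point is that the alcove $\varsigma + \Afund$ (equivalently $t_\varsigma(\Afund)$) is "dominant and translated," so the interval $[\id, t_\varsigma]$ in the Bruhat order on $\Waff$ has a simple, multiplicity-free structure; more precisely, $\uH_{t_\varsigma} = \sum_{y \leq t_\varsigma} v^{\ell(t_\varsigma)-\ell(y)} H_y$ with all Kazhdan--Lusztig polynomials equal to $1$, because $t_\varsigma$ has a reduced expression all of whose subwords give distinct elements (this can be checked directly from~\eqref{eqn:length}, or cited). Since the $p$-canonical basis element $\puH_{t_\varsigma}$ is bounded below by $\uH_{t_\varsigma}$ in the appropriate order and the Kazhdan--Lusztig element is already "as small as possible," we get $\puH_{t_\varsigma} = \uH_{t_\varsigma}$ for every $p$. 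Alternatively, following the parenthetical remark in the excerpt, I would instead simply defer the clean proof to~\S\ref{ss:parity-Gr}, where a geometric argument (the relevant parity complex on the affine Grassmannian is a shifted constant sheaf on a single affine-space orbit, hence perverse and "the same" in all characteristics) gives both the equality $\puN_{t_\varsigma} = \uN_{t_\varsigma}$ and the explicit formula at once.

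Granting this, the explicit formula $\uN_{t_\varsigma} = \sum_{z \in \Wf} v^{\ell(z)} N_{t_\varsigma \cdot z}$ follows from~\cite[Lemma~5.7]{soergel-comb-tilting}: the quoted result computes $\uN_{t_\varsigma}$ in the (parabolic) antispherical module in exactly this form, using that $t_\varsigma \cdot z$ runs over the $|\Wf|$ alcoves in the box $\hat{\Pi}_\varsigma$ (equivalently, that $\{t_\varsigma \cdot z : z \in \Wf\} = \{w \in \fWext : t_\varsigma \le w \le t_\varsigma \wf\}$ and all relevant parabolic Kazhdan--Lusztig polynomials are trivial). Finally, for the multiplication identity: fix $s \in \Sf$. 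By the formula just obtained, $\uN_{t_\varsigma}$ is supported on the alcoves $\{t_\varsigma \cdot z : z \in \Wf\}$, which is stable under right multiplication by $s$ (since $s \in \Sf$ and $\Wf s = \Wf$ as a set... more precisely $\{zs : z \in \Wf\} = \Wf$), and the coefficient of $N_{t_\varsigma \cdot z}$ is $v^{\ell(z)}$; applying the right action of $\uH_s = H_s + v$ via the analogue of~\eqref{eqn:action-Per} for $\Masphext$ and summing, the terms pair up exactly as in the standard computation of $\uH_s^2$, yielding $\uN_{t_\varsigma}\cdot \uH_s = (v+v^{-1})\uN_{t_\varsigma}$. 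Even more cleanly, $\zeta(M_{t_\varsigma}) \propto \uH_{\wf t_\varsigma}$ type reasoning shows $\uN_{t_\varsigma}$ lies in the image of the "induction from a bigger parabolic" map, on which $\uH_s$ acts by $v+v^{-1}$ by Lemma~\ref{lem:uH-mult-smaller}; I would use whichever of these is shortest in context.

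The main obstacle is the equality $\puN_{t_\varsigma} = \uN_{t_\varsigma}$ — i.e. the characteristic-independence — since $p$-canonical basis elements are genuinely hard to control in general; everything else is bookkeeping with~\eqref{eqn:length} and quotations from~\cite{soergel-comb-tilting}. As the excerpt signals, the right move is to postpone this to the geometric argument in~\S\ref{ss:parity-Gr}, where the relevant parity sheaf is manifestly a perverse constant sheaf on one orbit and hence behaves uniformly in $p$.
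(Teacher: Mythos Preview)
Your overall plan is sound, and the move you flag as ``the right move'' (deferring the equality $\puN_{t_\varsigma}=\uN_{t_\varsigma}$ to the geometric argument of~\S\ref{ss:parity-Gr}) is exactly what the paper does. The Soergel citation for the explicit expansion and the direct pairing computation for the multiplication identity are both valid.

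The combinatorial alternative you sketch, however, has a genuine gap. You propose to prove the stronger statement $\puH_{t_\varsigma}=\uH_{t_\varsigma}$ in $\Hext$ by asserting that every Kazhdan--Lusztig polynomial $h_{y,t_\varsigma}$ is the monomial $v^{\ell(t_\varsigma)-\ell(y)}$, justified by ``a reduced expression all of whose subwords give distinct elements.'' That claim is equivalent to rational smoothness of the Iwahori Schubert variety $\overline{\Fl_{t_\varsigma}}$, which you do not prove and which does not hold in general. Even granting the shape of $\uH_{t_\varsigma}$, the passage to $\puH_{t_\varsigma}=\uH_{t_\varsigma}$ (``as small as possible'') is not an argument. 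And going through $\Hext$ is harder than necessary: only the antispherical statement is needed.

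The paper's proof is cleaner and runs in the opposite logical direction. One checks via~\eqref{eqn:length} that $t_\varsigma$ is \emph{maximal} in $t_\varsigma \Wf$, so the Iwahori--Whittaker orbit on $\Fl$ indexed by $t_\varsigma$ is the full preimage under $\pi:\Fl\to\Gr$ of the orbit of $L_\varsigma$; hence $\cE^\IW_{t_\varsigma}=\pi^*(\cF^\IW_\varsigma)[\ell(\wf)]$. Since $\cF^\IW_\varsigma$ is supported on a single orbit (its boundary supports no nonzero Iwahori--Whittaker local system), this immediately yields the explicit formula for $\puN_{t_\varsigma}$, uniformly in $p$, together with the multiplication identity (right convolution with $\cE_s$ for $s\in\Sf$ acts along the fibre of $\pi$). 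The statements for $\uN_{t_\varsigma}$ then follow by taking $p\gg 0$ (Remark~\ref{rmk:pcan-p-large}). In short: keep your deferral and drop the combinatorial alternative.
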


Note for later use that, if $\omega \in \Omega$, multiplying~\eqref{eqn:uN-varsigma} on the right by $H_\omega$ 
we obtain that
\begin{equation}
 \label{eqn:uN-varsigma-2}
  \puN_{t_\varsigma \omega} = \uN_{t_\varsigma \omega} = \sum_{z \in \Wf} v^{\ell(z)} N_{t_\varsigma z \omega}.
 \end{equation}

Lemma~\ref{lem:rho+Af} shows that the map $\Hext \to \Masphext$ defined by $h \mapsto \uN_{t_\varsigma} \cdot h$ factors through a morphism of right $\Hext$-modules
\begin{equation}
\label{eqn:def-phi}
  \varphi : \Msphext \to \Masphext.
\end{equation}
The main technical result of the paper is the following.

\begin{thm}
\label{thm:main}
Assume that $p$ is good for $G$. Then for any $w \in \fWext$ we have
 \[
  \varphi(\puM_w) = \puN_{t_\varsigma \cdot w}.
 \]
\end{thm}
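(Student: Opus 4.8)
The strategy is to categorify the map $\varphi$ and deduce the statement from a fullness property of the corresponding functor. Concretely, the idea is to realize $\Msphext$ (after the twist by $H_{\omega_\lambda^{-1}}$, as in Remark~\ref{rmk:Msph-twist-ext}) as the split Grothendieck group of a category of (twisted) spherical parity complexes on the affine flag variety $\Fl$ of the Langlands dual group $G^\vee$, and $\Masphext$ as the split Grothendieck group of the category of Iwahori--Whittaker parity complexes on $\Fl$; in both cases the classes of indecomposable objects correspond to the respective $p$-canonical bases (this is the content of~\cite{rw} for the antispherical side, and of the analogous construction on the spherical side). The morphism $\varphi$, being given by right convolution with $\uN_{t_\varsigma}$, lifts to a convolution functor $\Phi$ between these two categories, sending the object categorifying $M_w$ to a parity complex whose class is $\uN_{t_\varsigma \cdot w}$ on the nose (this uses Lemma~\ref{lem:rho+Af}, which identifies $\uN_{t_\varsigma}$ as the class of the relevant ``kernel'' object, together with its $\uH_s$-invariance that guarantees the convolution descends to the spherical quotient).

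The key point is then to prove that $\Phi$ is \emph{full}. Granting this, the argument concludes formally: both source and target categories are Krull--Schmidt $\C$-linear categories (the endomorphism rings of parity complexes being finite-dimensional), and a full additive functor between Krull--Schmidt categories sends indecomposable objects to indecomposable objects (an indecomposable has local endomorphism ring, and a surjection of rings onto a nonzero ring with a local source has local image). Therefore $\Phi$ sends the indecomposable object lifting $\puM_w$ to an indecomposable parity complex in the Iwahori--Whittaker category; since its class in the Grothendieck group is $\varphi(\puM_w)$, which by construction lies in the span of the $\puN$'s with leading term $\puN_{t_\varsigma \cdot w}$ (the underlying ``combinatorics'' of support being controlled by the standard basis computation $\varphi(M_w) = N_{t_\varsigma \cdot w} + (\text{lower})$), it must equal $\puN_{t_\varsigma \cdot w}$ exactly. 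Here one uses the standard triangularity of $p$-canonical bases with respect to the standard basis and the Bruhat(-type) order.

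The fullness of $\Phi$ is where the real work lies, and it is to be imported from known results rather than proved from scratch: the analogous statement with $\Fl$ replaced by the affine Grassmannian $\Gr$ of $G^\vee$ — where the corresponding functor is in fact an \emph{equivalence} — is established in~\cite{bgmrr} (the geometric Satake-type Iwahori--Whittaker equivalence). One then transfers fullness from the $\Gr$-setting to the $\Fl$-setting by a standard compatibility argument: there is a smooth (affine-bundle) projection $\Fl \to \Gr$ relating the two pictures, the relevant convolution functors intertwine appropriately with pullback/pushforward along this map, and parity complexes behave well under these operations (by~\cite{jmw}), so the $\Fl$-level functor inherits fullness from the $\Gr$-level equivalence. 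The main obstacle, and the part requiring genuine care, is setting up this transfer precisely: one must check that the Iwahori--Whittaker and spherical parity-complex categories on $\Fl$ are generated (as additive or triangulated categories, compatibly with convolution) by objects pulled back from $\Gr$, so that surjectivity on Hom-spaces at the $\Gr$-level propagates to $\Fl$. Once the geometric input is correctly formulated and quoted, the descent of $\varphi$ to the spherical module, the Krull--Schmidt bookkeeping, and the triangularity argument are all routine.

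Finally, a remark on the classical ($p = 0$) shadow of this argument, mentioned in Remark~\ref{rmk:intro-Hext}\eqref{it:thm-classical}: replacing parity complexes by semisimple complexes (equivalently, working with the Kazhdan--Lusztig bases), the same functor is full for the same geometric reason, giving $\varphi(\uM_w) = \uN_{t_\varsigma \cdot w}$; a purely combinatorial proof of this identity is deferred to Section~\ref{sec:comb-proof}, and can also be used as a consistency check on the normalizations (in particular on the choice of $\varsigma$) in the statement above.
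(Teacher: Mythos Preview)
Your overall strategy matches the paper's: categorify $\varphi$ as a convolution functor $\Phi$ between parity-complex categories on $\Fl$, prove $\Phi$ is full by reduction to the affine-Grassmannian result of~\cite{bgmrr}, and conclude via Krull--Schmidt plus support considerations. Two points in your sketch need correction, however.

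First, the transfer of fullness from $\Gr$ to $\Fl$ does not go via ``generation by objects pulled back from $\Gr$'': the Iwahori--Whittaker parity category on $\Fl$ is certainly not generated this way (it sees the finer Iwahori stratification). The paper instead argues by two adjunctions. Every object of $\Par_{G^\vee_\scO}(\Fl,\K)$ is a direct summand of some $\underline{\K}_{G^\vee_\scO/I^\vee} \star^{I^\vee} \cE$ with $\cE \in \Par_{I^\vee}(\Fl,\K)$, and since $(-)\star^{I^\vee}\cE$ has a left adjoint of the same shape (and $\Phi$ commutes with right $I^\vee$-convolution), one reduces to checking surjectivity on $\Hom$'s into $\underline{\K}_{G^\vee_\scO/I^\vee}=\pi^*\cE^\sph_0$. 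Then the adjunction for $\pi_!\dashv\pi^!$ (using $\pi^*\cong\pi^![-2\ell(\wf)]$) together with $\pi_!\circ\Phi\cong\Psi\circ\pi_!$ pushes the question down to $\Gr$.

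Second, you do not mention Theorem~\ref{thm:MR}, and this is where the hypothesis ``$p$ good'' is actually consumed. The input from~\cite{bgmrr} is an equivalence on \emph{perverse sheaves}, not on parity complexes. To obtain surjectivity on $\Hom$'s between parity complexes on $\Gr$ (Corollary~\ref{cor:Phi-full}), one needs that the indecomposable spherical parities $\cE^\sph_\lambda$ are perverse (Theorem~\ref{thm:MR}, requiring $p$ good) and that their images under $\Psi$ are tilting (the second part of Theorem~\ref{thm:BGMRR}), so that higher $\Hom$'s on the target side vanish while degree-zero $\Hom$'s match via the equivalence. Without this step your reduction to~\cite{bgmrr} is incomplete.
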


\begin{rmk}\phantomsection
\label{rmk:main-thm}
\begin{enumerate}
\item
Theorem~\ref{thm:main} implies in particular that $\varphi$ is injective. (Of course, this can also be seen more directly.)
\item
\label{it:rmk-Haff-Hext}
To deduce Theorem~\ref{thm:intro-phi} from Theorem~\ref{thm:main}, one simply observes that $\varphi$ restricts to a morphism of right $\Haff$-modules from the submodule of $\Msphext$ generated by $N_{\omega_{\varsigma}^{-1}}$ to $\Masph$. Now the latter submodules identifies with $\Msph_{\overline{\varsigma}}$ (see Remark~\ref{rmk:Msph-twist-ext}), so that Theorem~\ref{thm:intro-phi} becomes the special case of Theorem~\ref{thm:main} when $w \in \omega_{\varsigma}^{-1} \Waff$. 
\end{enumerate}
\end{rmk}

\section{Proof of Theorem~\ref{thm:main}}

\subsection{Categorification and $p$-canonical bases}
\label{ss:categorification}

The proof of Theorem~\ref{thm:main} will use the geometric description of the $p$-canonical bases in terms of parity complexes, which we now recall. For this we need to choose a field $\K$ of coefficients for the parity complexes, which should be of characteristic $p$ but might differ from $\bk$. In fact, for technical reasons we will take for $\K$ a finite field. We also choose a prime number $\ell \neq p$, and assume that $\K$ contains a nontrivial $\ell$-th root of unity.

We now fix an algebraically closed field $\F$ of characteristic $\ell$.
Let $G^\vee$ be the connected reductive algebraic group over $\F$ which is Langlands dual to $G$. By definition, this group comes with a maximal torus $T^\vee \subset G^\vee$ whose cocharacter lattice is $\bX$. We will denote by $B^\vee \subset G^\vee$ the Borel subgroup containing $T^\vee$ whose $T^\vee$-weights are the negative coroots of $(G,T)$. We set $\scK:=\F( \hspace{-1pt} (z) \hspace{-1pt} )$, $\scO:=\F[ \hspace{-1pt} [z] \hspace{-1pt} ]$, and denote by $G^\vee_\scK$, resp.~$G^\vee_\scO$, the ind-group scheme, resp.~group scheme, over $\F$ which represents the functor $R \mapsto G^\vee \bigl( R( \hspace{-1pt} (z) \hspace{-1pt} ) \bigr)$, resp.~$R \mapsto G^\vee \bigl( R[ \hspace{-1pt} [z] \hspace{-1pt} ] \bigr)$. We also denote by $I^\vee \subset G^\vee_\scK$ the Iwahori subgroup, i.e.~the inverse image of $B^\vee$ under the evaluation morphism $G^\vee_\scO \to G^\vee$. We then consider the affine flag variety
\[
\Fl := G^\vee_\scK / I^\vee.
\]

Following~\cite{jmw} we can consider the category $\Par_{I^\vee}(\Fl,\K)$ of $I^\vee$-equivariant parity (\'etale) $\K$-complexes on $\Fl$.
The $I^\vee$-orbits on $\Fl$ are parametrized in a natural way by $\Wext$; we will denote by $\Fl_w$ the orbit corresponding to $w$ (so that $\dim(\Fl_w)=\ell(w)$). For any $w \in \Wext$, there exists a unique indecomposable parity complex $\cE_w$ on $\Fl$ which is supported on $\overline{\Fl_w}$ and whose restriction to $\Fl_w$ is $\underline{\K}_{\Fl_w}[\ell(w)]$. Then the assignment $(w,n) \mapsto \cE_w[n]$ defines a bijection between $\Wext \times \Z$ and the set of isomorphism classes of indecomposable objects in $\Par_{I^\vee}(\Fl,\K)$.

The usual convolution construction endows $\Par_{I^\vee}(\Fl,\K)$ with the structure of a monoidal category. (The fact that a convolution of parity complexes is parity is proved in~\cite[\S 4.1]{jmw}.) In particular, the split Grothendieck group
\[
[\Par_{I^\vee}(\Fl,\K)]
\]
has a natural product; we will in fact view this ring as a $\Z[v^{\pm 1}]$-algebra, where $v$ acts via the automorphism induced by the cohomological shift $[1]$. It is well known (see~\cite{springer,jmw,jw}) that there exists a unique $\Z[v^{\pm 1}]$-algebra isomorphism
\begin{equation}
\label{eqn:Hext-parity}
\Hext \simto [\Par_{I^\vee}(\Fl,\K)]
\end{equation}
sending $\uH_s$ to $[\cE_s]$ for any $s \in \Saff$ and $H_\omega$ to $[\cE_\omega]$ for any $\omega \in \Omega$.
Then
for $w \in \Wext$, the element $\puH_w$ is the inverse image of $[\cE_w]$ under~\eqref{eqn:Hext-parity}, see e.g.~\cite[Part~3]{rw}.
Recall also that the \emph{$p$-Kazhdan--Lusztig polynomials} are the elements $(\pph_{y,w})_{y,w \in \Wext}$ of $\Z[v^{\pm 1}]$ such that
\[
 \puH_w = \sum_{y \in \Wext} \pph_{y,w} \cdot H_y.
\]


In order to categorify the module $\Masphext$, we consider the category of ``Iwahori--Whittaker'' parity complexes $\Par_{\IW}(\Fl,\K)$ on $\Fl$. These objects are defined using the action of the unipotent radical $I^{\vee,+}_{\mathrm{u}}$ of the Iwahori subgroup $I^{\vee,+}$ associated with the Borel subgroup $B^{\vee,+}$ of $G^\vee$ which is opposite to $B^\vee$ with respect to $T^\vee$; see~\cite[\S 11]{rw} for details. (Here we use our assumption on $\ell$-th roots of unity in $\K$.) The $I^{\vee,+}_{\mathrm{u}}$-orbits on $\Fl$ are parametrized in a natural way by $\Wext$; but only those corresponding to elements in $\fWext$ support nonzero Iwahori--Whittaker local systems. Therefore the isomorphism classes of indecomposable objects in $\Par_{\IW}(\Fl,\K)$ are naturally in bijection with $\fWext \times \Z$; we will denote by $\cE_w^\IW$ the object associated with $(w,0)$.

The convolution construction defines a right action of the monoidal category $\Par_{I^\vee}(\Fl,\K)$ on the category $\Par_{\IW}(\Fl,\K)$, and there exists a unique isomorphism of right $\Hext$-modules
\begin{equation}
\label{eqn:Masphext}
\Masphext \simto [\Par_{\IW}(\Fl,\K)]
\end{equation}
sending $N_{\id}$ to $\cE^\IW_{\id}$. Then for $w \in \fWext$, $\puN_w$ is the inverse image of $[\cE_w^\IW]$ under this isomorphism, see~\cite[\S 11]{rw}.



Finally, we explain the categorification of $\Msphext$.
We consider the ``opposite affine Grassmannian''
\[
\Gr^\op := G^\vee_\scO \backslash G^\vee_{\scK}.
\]
This variety admits an action of $I^\vee$ induced by right multiplication on $G^\vee_\scK$, and we can consider the corresponding category of parity complexes $\Par_{I^\vee}(\Gr^{\op},\K)$. The $I^\vee$-orbits on $\Gr^{\op}$ are parametrized by $\fWext$; therefore the indecomposable objects in $\Par_{I^\vee}(\Gr^{\op},\K)$ are parametrized in a natural way by $\fWext \times \Z$. The object associated with $(w,0)$ (for $w \in \fWext$) will be denoted $\cF_w$.

Again, the convolution construction defines a right action of the monoidal category $\Par_{I^\vee}(\Fl,\K)$ on the category $\Par_{I^\vee}(\Gr^{\op},\K)$, and there exists a unique isomorphism of right $\Hext$-modules
\[
\Msphext \simto [\Par_{I^\vee}(\Gr^{\op},\K)]
\]
sending $M_{\id}$ to $\cF_{\id}$. Using~\cite[Lemma~A.5]{acr} and the construction of the $p$-canonical basis in $\Msphext$, one can check that,
for $w \in \fWext$, $\puM_w$ is the inverse image of $[\cF_w]$ under this isomorphism.


\subsection{Parity complexes on affine Grassmannians}
\label{ss:parity-Gr}

From now on we assume that $p$ is good for $G$.

Consider the ``usual'' affine Grassmannian
\[
\Gr:=G^\vee_{\scK} / G^\vee_\scO,
\]
and the $G^\vee_\scO$-equivariant constructible derived category $\Db_{G^\vee_\scO}(\Gr,\K)$. This category possesses a natural perverse t-structure, whose heart will be denoted $\Perv_{G^\vee_\scO}(\Gr, \K)$.

Under our assumptions that $p$ is good for $G$ (equivalently, for $G^\vee$) and that $G$ has a simply-connected derived subgroup (so that the quotient of $X^*(T^\vee)$ by the coroot lattice of $G$ is torsion-free), it is known that the equivariant cohomology $\mathsf{H}^\bullet_{G^\vee}(\mathrm{pt}; \K) = \mathsf{H}^\bullet_{G^\vee_\scO}(\mathrm{pt}; \K)$ vanishes in odd degrees; see e.g.~\cite[\S 2.6]{jmw} or~\cite[\S 3.2]{mr} for references. Therefore, the theory developed in~\cite{jmw} applies in this context, and we will denote by $\Par_{G^\vee_\scO}(\Gr,\K)$ the corresponding category of parity complexes.

For $\lambda \in \bX$, we set $L_\lambda := z^{\lambda} \cdot G^\vee_\scO \in \Gr$. Then the assignment $\lambda \mapsto G^\vee_\scO \cdot L_\lambda$ induces a bijection between $\bX^+$ and the set of $G^\vee_\scO$-orbits on $\Gr$. Therefore, the isomorphism classes of indecomposable objects in $\Par_{G^\vee_\scO}(\Gr,\K)$ are parametrized in a natural way by $\bX^+ \times \Z$; for $\lambda \in \bX^+$ we will denote by $\cE^\sph_\lambda$ the object associated with $(\lambda,0)$.

The following result is proved in~\cite{jmw2} under some technical assumptions, and in~\cite[Corollary~1.6]{mr} in the present generality. (This claim is known to be false if we remove the assumption that $p$ is good for $G$, see~\cite{jmw2}.)

\begin{thm}
\label{thm:MR}
For any $\lambda \in \bX^+$, the object $\cE^\sph_\lambda$ is perverse.
\end{thm}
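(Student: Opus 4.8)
The plan is to prove Theorem~\ref{thm:MR} by reducing the statement about parity complexes on $\Gr$ to a statement about the geometric Satake equivalence and the known structure of tilting modules for $G$. Recall that by the geometric Satake equivalence of Mirkovi\'c--Vilonen, the category $\Perv_{G^\vee_\scO}(\Gr,\K)$ is equivalent, as a monoidal category, to the category $\Rep(G_\K)$ of finite-dimensional algebraic representations of the split reductive $\K$-group $G_\K$ whose root datum is dual to that of $G^\vee$ (i.e.~the base change to $\K$ of our original group $G$). Under this equivalence, the simple perverse sheaf $\IC_\lambda$ on the orbit closure $\overline{G^\vee_\scO \cdot L_\lambda}$ corresponds to the simple module $\Sim(\lambda)$, and the standard/costandard objects (the $!$- and $*$-extensions of constant sheaves on the orbits, suitably shifted) correspond to Weyl modules $\Delta(\lambda)$ and induced modules $\nabla(\lambda)$. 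The essential point is then that an object of $\Perv_{G^\vee_\scO}(\Gr,\K)$ is a \emph{tilting} object for the highest-weight structure (i.e.~has both a standard and a costandard filtration) if and only if the corresponding $G_\K$-module is a tilting module.

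First I would recall (or cite, e.g.~from the work of Juteau--Mautner--Williamson or Achar--Rider) that the parity complexes $\cE^\sph_\lambda$, when they happen to be perverse, are precisely the indecomposable tilting objects of $\Perv_{G^\vee_\scO}(\Gr,\K)$: a parity complex that is perverse is automatically $*$-even and $!$-even in the appropriate sense, which translates under Satake into having both kinds of filtrations. Conversely, the indecomposable tilting object $\mathsf{T}(\lambda)$ with highest weight $\lambda$, being a successive extension of standard and costandard objects supported on orbit closures, is a parity complex (this uses the evenness of the equivariant cohomology of the orbits, which holds since $p$ is good and the relevant lattice quotient is torsion-free, exactly as invoked before the statement). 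So the content of the theorem is: \emph{the indecomposable tilting $G_\K$-module $\mathsf{T}(\lambda)$ and the indecomposable parity complex $\cE^\sph_\lambda$ have the same support} — equivalently, $\cE^\sph_\lambda$ lives entirely in perverse degree $0$.

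The key step is therefore to control the support, and here I would use the ``miraculous'' interaction between the perverse t-structure and the Satake grading: the hypercohomology functor $\mathsf{H}^\bullet(\Gr,-)$ is exact on perverse sheaves and corresponds to the total weight space / forgetful functor, and the failure of $\cE^\sph_\lambda$ to be perverse would force its hypercohomology to be concentrated in the wrong degrees relative to the cohomological grading coming from the loop rotation $\mathbb{G}_m$-action. More precisely, one exploits that $\Gr$ is ``equivariantly formal'' and that the cohomology of $\cE^\sph_\lambda$ is free over $\mathsf{H}^\bullet_{G^\vee}(\mathrm{pt};\K)$ (by parity); combined with the self-duality of $\cE^\sph_\lambda$ under Verdier duality, a weight/parity count pins its support inside the perverse heart. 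This is essentially the argument of~\cite{mr}, building on~\cite{jmw2}; I would cite it rather than reproduce it.

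The main obstacle is precisely this last support estimate: it is genuinely false without the goodness hypothesis on $p$ (as noted, with a counterexample in~\cite{jmw2}), so any proof must use goodness in an essential way — via the evenness of $\mathsf{H}^\bullet_{G^\vee}(\mathrm{pt};\K)$ and, more subtly, via the fact that for good $p$ the ``smallness'' of the convolution morphisms and the parity of the stalks of $\IC$-sheaves behave as in characteristic $0$. In practice I would not attempt an independent proof: since the statement is quoted verbatim from~\cite[Corollary~1.6]{mr} (with the predecessor~\cite{jmw2} handling the case of ``most'' groups), the honest ``proof'' in the body of the paper is a pointer to those references, together with the observation that our running hypotheses ($p$ good, derived subgroup simply connected) are exactly what is needed for~\cite[\S 2.6]{jmw} and~\cite{mr} to apply.
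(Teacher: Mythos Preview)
Your proposal is correct and matches the paper's approach: the paper does not give a proof of this theorem at all, but simply cites~\cite{jmw2} (under extra hypotheses) and~\cite[Corollary~1.6]{mr} (in the stated generality), exactly as you conclude in your final paragraph. Your preceding sketch of the ideas behind those references is reasonable background but not part of the paper's argument.
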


\begin{rmk}
 From the combinatorial point of view, this theorem says that if $w \in \Wext$ is maximal in $\Wf w \Wf$, then $\puH_w$ belongs to $\bigoplus_y \Z \cdot \uH_y$.
\end{rmk}

The other result which we will need is the main result
of~\cite{bgmrr}. Here we consider the Iwahori--Whittaker derived
category of sheaves on $\Gr$, denoted $\Db_{\IW}(\Gr,\K)$. This
category is endowed with the perverse t-structure, whose heart will be
denoted $\Perv_{\IW}(\Gr,\K)$. This abelian category admits a natural
structure of highest weight category (in the sense considered e.g.~in~\cite[\S 7]{riche-hab}), and moreover the realization functor provides an equivalence of triangulated categories
\[
 \Db \Perv_{\IW}(\Gr,\K) \simto \Db_{\IW}(\Gr,\K).
\]

 The $I^{\vee,+}_{\mathrm{u}}$-orbits on $\Gr$ are parametrized by $\bX$, and those which support a nonzero Iwahori--Whittaker local system are the ones parametrized by elements in $\varsigma + \bX^+$ (i.e.~by strictly dominant weights). In particular, no orbit in the boundary of the orbit associated with $\varsigma$ supports such a local system; therefore the corresponding standard perverse sheaf is simple, and isomorphic to the associated costandard perverse sheaf (see~\cite[Equation~(3.2)]{bgmrr}). Hence this object is also parity, and will be denoted $\cF_\varsigma^\IW$.
 
 The following result is proved in~\cite{bgmrr}. (The first claim holds without any assumption on $p$; however for the second assertion we need the restriction that $p$ is good.) Here we denote by $\star^{G^\vee_\scO}$ the natural convolution bifunctor
 \[
  \Db_\IW(\Gr,\K) \times \Db_{G^\vee_\scO}(\Gr, \K) \to \Db_\IW(\Gr,\K)
 \]
(see~\cite{bgmrr} for details).
 
 \begin{thm}
 \label{thm:BGMRR}
 The functor
 \[
 \Psi : \Db_{G^\vee_\scO}(\Gr, \K) \to \Db_\IW(\Gr,\K)
 \]
 defined by $\Psi(\cF) = \cF_\varsigma^\IW \star^{G^\vee_\scO} \cF$ is t-exact for the perverse t-structures, and restricts to an equivalence of categories $\Perv_{G^\vee_\scO}(\Gr, \K) \simto \Perv_\IW(\Gr,\K)$.
Moreover, for any $\lambda \in \bX$, the object $\Psi(\cE^\sph_\lambda)$ is a tilting perverse sheaf.
 \end{thm}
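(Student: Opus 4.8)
The plan is to verify the three assertions in turn: t-exactness of $\Psi$, the fact that it restricts to an equivalence on the perverse hearts, and the tilting property.

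\textbf{t-exactness.} Since $\star^{G^\vee_{\scO}}$ is triangulated in each variable and the perverse t-structures involved are bounded, it is enough to show that $\Psi$ sends $\Perv_{G^\vee_{\scO}}(\Gr,\K)$ into $\Perv_\IW(\Gr,\K)$. This is an Iwahori--Whittaker avatar of the t-exactness of convolution underlying geometric Satake: one shows that the convolution morphism $\Gr \ttimes \Gr \to \Gr$ is stratified semismall when the source is stratified by products $(I^{\vee,+}_{\mathrm{u}}\text{-orbit}) \ttimes (G^\vee_{\scO}\text{-orbit})$ and the target by $I^{\vee,+}_{\mathrm{u}}$-orbits, so that $\cF \star^{G^\vee_{\scO}} \mathcal{A}$ is perverse whenever $\cF \in \Perv_\IW(\Gr,\K)$ and $\mathcal{A} \in \Perv_{G^\vee_{\scO}}(\Gr,\K)$; alternatively one deduces this from the classical t-exactness of the convolution action of $\Perv_{G^\vee_{\scO}}(\Gr,\K)$ by a Whittaker-averaging argument. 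Taking $\cF = \cF_\varsigma^\IW$ — which is perverse, being the common value of the standard and costandard objects attached to the minimal Iwahori--Whittaker orbit — yields that $\Psi$ is t-exact.

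\textbf{Equivalence.} Write $\Psi_0$ for the resulting exact functor on hearts. The crucial point is to compute $\Psi_0$ on standard and costandard objects: using the Mirkovi\'c--Vilonen (semi-infinite orbit) description of the Satake standards and costandards, together with the fact that the minimal Iwahori--Whittaker orbit $\Gr_\varsigma^\IW$ is closed, one checks that $\Psi_0$ carries standard objects to standard Iwahori--Whittaker objects, and likewise costandard objects to costandard ones, shifting the highest weight by $\varsigma$. Since a simple object is the image of the canonical map from its standard object to its costandard object and $\Psi_0$ is exact, it follows that $\Psi_0$ sends $\IC_\lambda$ to $\IC^\IW_{\varsigma+\lambda}$; in particular $\Psi_0$ induces a bijection on simple objects and is compatible with the highest weight structures. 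To promote this to an equivalence I would either construct a quasi-inverse (convolution with the object dual to $\cF_\varsigma^\IW$, checking the unit and counit on standard objects, which reduces to the computation above) or establish full faithfulness by comparing the $\Hom$-spaces between standard and costandard objects on the two sides, essential surjectivity then following from the bijection on simples.

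\textbf{Tilting.} The hypothesis that $p$ is good enters through Theorem~\ref{thm:MR}, which ensures that $\cE^\sph_\lambda$ is simultaneously a parity complex and a perverse sheaf; combined with the identification, for good $p$, of the parity complexes on $\Gr$ with the indecomposable tilting objects of the Satake category, this gives $\cE^\sph_\lambda$ both a standard filtration and a costandard filtration. Applying the exact functor $\Psi_0$ and using that it carries standards to standard Iwahori--Whittaker objects and costandards to costandard ones, $\Psi(\cE^\sph_\lambda)$ acquires both filtrations, hence is a tilting perverse sheaf. The genuinely delicate step in this program is the t-exactness of Iwahori--Whittaker convolution: one must either transport the semismallness estimates from geometric Satake to the Iwahori--Whittaker stratification — controlling how the $G^\vee_{\scO}$-orbits meet the $I^{\vee,+}_{\mathrm{u}}$-orbits inside $\Gr \ttimes \Gr$ — or set up the averaging formalism carefully enough that the reduction to the classical case is clean; once this is in hand, the remaining steps are comparatively formal.
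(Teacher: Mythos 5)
The paper does not prove Theorem~\ref{thm:BGMRR}: it is quoted verbatim from~\cite{bgmrr}, which is where the argument actually lives. Your outline broadly tracks the architecture of the proof in~\cite{bgmrr} — exploit the cleanness of $\cF_\varsigma^\IW$, prove t-exactness of the Iwahori--Whittaker convolution, match standard and costandard objects under $\Psi_0$, and upgrade to an equivalence by a highest weight argument — so the plan is sound.

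That said, the proposal has a genuine gap plus a small factual slip. The gap: the t-exactness of $\Psi$ is the core technical content of~\cite{bgmrr}, and you correctly flag it as the delicate step, but you do not actually carry out either route you propose (the semismallness estimate for the Iwahori--Whittaker stratification of $\Gr \ttimes \Gr$, or the averaging reduction to the classical Satake convolution). As written, the rest of the argument rests on an unproven assertion, so this cannot count as a proof. The slip: you claim the orbit $\Gr_\varsigma^\IW$ is closed. It is not (its closure contains $I^{\vee,+}_{\mathrm{u}}$-orbits attached to smaller weights); the relevant fact, which the paper states explicitly, is that no orbit in its boundary supports an Iwahori--Whittaker local system, so the extension is \emph{clean} (standard $=$ costandard $=$ simple) even though the orbit has a nonempty boundary. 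The standard-to-standard and costandard-to-costandard computation you invoke should rest on this cleanness (and on the geometry of the convolution diagram), not on closedness.

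One further remark on the tilting step: your argument needs $\cE^\sph_\lambda$ to admit both a standard and a costandard filtration in the Satake category, i.e.\ the full ``parity $=$ tilting'' identification of~\cite{jmw2,mr} for good $p$, not merely the perversity of $\cE^\sph_\lambda$ that the present paper quotes from Theorem~\ref{thm:MR}. That stronger statement is indeed available under the hypotheses in force, so this is not a flaw, but it is worth being explicit that you are using more than what the paper cites at that point.
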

 
 The consequence of Theorems~\ref{thm:MR} and~\ref{thm:BGMRR} that we will use below is the following.
 
\begin{cor}
\label{cor:Phi-full}
For any $\cG,\cG'$ in $\Par_{G^\vee_\scO}(\Gr, \K)$, the morphism
\[
\Hom_{\Db_{G^\vee_\scO}(\Gr, \K)}(\cG,\cG') \to \Hom_{\Db_{\IW}(\Gr,\K)}(\Psi(\cG), \Psi(\cG'))
\]
induced by $\Psi$
is surjective.
\end{cor}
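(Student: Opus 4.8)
The plan is to deduce the surjectivity in Corollary~\ref{cor:Phi-full} from the t-exactness of $\Psi$ together with the fact that $\Psi$ sends the objects $\cE^\sph_\lambda$ to tilting perverse sheaves, using standard homological properties of parity complexes and highest weight categories. First I would reduce to the case of indecomposable objects: since $\Par_{G^\vee_\scO}(\Gr,\K)$ is Krull--Schmidt with indecomposables the $\cE^\sph_\lambda[n]$, and since the cohomological shift $[1]$ commutes with $\Psi$, it suffices to prove surjectivity of
\[
\Hom_{\Db_{G^\vee_\scO}(\Gr, \K)}(\cE^\sph_\lambda, \cE^\sph_\mu[n]) \to \Hom_{\Db_{\IW}(\Gr,\K)}(\Psi(\cE^\sph_\lambda), \Psi(\cE^\sph_\mu)[n])
\]
for all $\lambda,\mu \in \bX^+$ and all $n \in \Z$.

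The key observation is that parity complexes have no negative self-extensions and, more precisely, that $\Hom(\cE^\sph_\lambda,\cE^\sph_\mu[n]) = 0$ for $n$ odd, while for $n$ even the Hom-group between parity complexes is a free module computed degree by degree (this is part of the formalism of~\cite{jmw}). On the Iwahori--Whittaker side, since $\Psi(\cE^\sph_\lambda)$ and $\Psi(\cE^\sph_\mu)$ are tilting objects in the highest weight category $\Perv_\IW(\Gr,\K)$, and the realization functor $\Db\Perv_\IW(\Gr,\K) \simto \Db_\IW(\Gr,\K)$ is an equivalence, we have $\Hom(\Psi(\cE^\sph_\lambda),\Psi(\cE^\sph_\mu)[n]) = \Ext^n_{\Perv_\IW(\Gr,\K)}(\Psi(\cE^\sph_\lambda),\Psi(\cE^\sph_\mu))$, which vanishes for $n \neq 0$ because tilting objects are both $\Delta$-filtered and $\nabla$-filtered. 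Hence for $n \neq 0$ the target is zero and surjectivity is automatic, and it remains to treat $n = 0$. For $n = 0$, since $\Psi$ restricts to an equivalence $\Perv_{G^\vee_\scO}(\Gr,\K) \simto \Perv_\IW(\Gr,\K)$ (Theorem~\ref{thm:BGMRR}) and $\cE^\sph_\lambda$, $\cE^\sph_\mu$ are perverse (Theorem~\ref{thm:MR}), the map $\Hom_{\Perv_{G^\vee_\scO}}(\cE^\sph_\lambda,\cE^\sph_\mu) \to \Hom_{\Perv_\IW}(\Psi\cE^\sph_\lambda,\Psi\cE^\sph_\mu)$ is a bijection; and because $\cE^\sph_\lambda$, $\cE^\sph_\mu$ lie in the hearts, $\Hom$ in the derived category between them agrees with $\Hom$ in the abelian category on both sides. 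Composing these identifications gives that the map in question is in fact a bijection when $n = 0$, and surjective (trivially) when $n \neq 0$.

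The main subtlety, and the step I would be most careful about, is the parity vanishing on the source side: one must know that $\Hom_{\Db_{G^\vee_\scO}(\Gr,\K)}(\cG,\cG'[n])$ vanishes for $n$ odd when $\cG,\cG'$ are parity, so that the only contributions come from even $n$, and then match these with the Ext-groups on the Whittaker side. This uses that the odd equivariant cohomology of a point vanishes (recalled in~\S\ref{ss:parity-Gr} under the goodness hypothesis), which is exactly what makes the $\Par$ formalism of~\cite{jmw} available here. Given this, the argument is essentially a bookkeeping of t-structures: t-exactness of $\Psi$ transports the perverse cohomology, perversity of the $\cE^\sph_\lambda$ places everything in the heart in degree $0$, and the tilting property kills the higher extensions on the target. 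I would also note that one does not even need the full strength of bijectivity — surjectivity suffices for the later application (fullness of $\Phi$) — so if any of the identifications above were only known to be surjective rather than bijective, the argument would still go through.
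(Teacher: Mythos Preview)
Your proof is correct and follows essentially the same route as the paper: reduce to indecomposables, use the tilting property of $\Psi(\cE^\sph_\lambda)$ to kill the target for $n\neq 0$, and use that $\Psi$ is an equivalence on perverse sheaves (together with perversity of $\cE^\sph_\lambda$) to get bijectivity for $n=0$. The one extraneous ingredient is your ``main subtlety'': the odd-degree parity vanishing on the source side is true but plays no role here, since once the target vanishes for $n\neq 0$ surjectivity is automatic regardless of the source, so you can safely drop that paragraph.
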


\begin{proof}
Any object of $\Par_{G^\vee_\scO}(\Gr, \K)$ is a direct sum of cohomological shifts of objects of the form $\cE^\sph_\lambda$ (with $\lambda \in \bX^+$); therefore to prove the corollary it suffices to prove that for any $\lambda,\mu \in \bX^+$ and $n \in \Z$ the functor $\Psi$ induces a surjection
\begin{equation}
\label{eqn:Phi-full}
\Hom_{\Db_{G^\vee_\scO}(\Gr, \K)}(\cE^\sph_\lambda,\cE^\sph_\mu[n]) \to \Hom_{\Db_{\IW}(\Gr,\K)}(\Psi(\cE^\sph_\lambda), \Psi(\cE^\sph_\mu)[n]).
\end{equation}
Now, by Theorem~\ref{thm:BGMRR} the objects $\Psi(\cE^\sph_\lambda)$ and $\Psi(\cE^\sph_\mu)$ are tilting perverse sheaves; therefore the right-hand side vanishes unless $n=0$. And if $n=0$, since $\cE^\sph_\lambda$ and $\cE^\sph_\mu$ are perverse, and since $\Psi$ restricts to an equivalence on perverse sheaves, the map~\eqref{eqn:Phi-full} is an isomorphism in this case.
\end{proof}

We can now give the proof of Lemma~\ref{lem:rho+Af}.

\begin{proof}[Proof of Lemma~{\rm \ref{lem:rho+Af}}]
One can easily check using~\eqref{eqn:length} (and the fact that $\ell(x)=\ell(x^{-1})$ for any $x \in \Wext$) that $t_\varsigma$ is of maximal length in $t_\varsigma \cdot \Wf$. Therefore, the $I^{\vee,+}_{\mathrm{u}}$-orbit in $\Fl$ associated with $t_\varsigma$ is the inverse image under the projection $\pi : \Fl \to \Gr$ of the orbit of $L_\varsigma$. Using~\cite[Lemma~A.5]{acr} we deduce that $\cE_{t_\varsigma}^\IW = \pi^*(\cF_\varsigma^\IW) [\ell(\wf)]$. It follows that $  \puN_{t_\varsigma} = \sum_{z \in \Wf} v^{\ell(z)} N_{t_\varsigma \cdot z}$, and that for any $s \in \Sf$ we have
 $\puN_{t_\varsigma} \cdot \uH_s = (v+v^{-1}) \cdot \puN_{t_\varsigma}$. The claims about $\uN_{t_\varsigma}$ follow, taking $p \gg 0$ (see Remark~\ref{rmk:pcan-p-large}).
\end{proof}

\subsection{Fullness}

To prove Theorem~\ref{thm:main} we will consider a categorification of $\varphi$. 
For this, we work with the $G^\vee_\scO$-equivariant derived category $\Db_{G^\vee_\scO}(\Fl,\K)$. This category admits a right action of the $I^\vee$-equivariant derived category $\Db_{I^\vee}(\Fl,\K)$ (by convolution, as usual), and it is clear that there exists a canonical equivalence of triangulated categories
\[
\imath : \Db_{I^\vee}(\Gr^\op,\K) \simto \Db_{G^\vee_\scO}(\Fl,\K)
\]
sending $\cF_\id$ to $\underline{\K}_{G^\vee_\scO/I^\vee}[\ell(\wf)]$ (where $\Gr^\op$ is as in~\S\ref{ss:categorification}) and commuting with the right actions of $\Db_{I^\vee}(\Fl,\K)$ on both sides.
Moreover, the theory of parity complexes from~\cite{jmw} applies in $\Db_{G^\vee_\scO}(\Fl,\K)$ also, and $\imath$ restricts to an equivalence of categories
\[
\imath_{\mathsf{Par}} : \Par_{I^\vee}(\Gr^\op,\K) \simto \Par_{G^\vee_\scO}(\Fl,\K),
\]
where in the right-hand side $\Par_{G^\vee_\scO}(\Fl,\K)$ means the full subcategory of parity complexes 
in $\Db_{G^\vee_\scO}(\Fl,\K)$.
In particular, the indecomposable objects in the category $\Par_{G^\vee_\scO}(\Fl,\K)$ are the objects $\imath(\cF_w)[n]$ for $w \in \fWext$ and $n \in \Z$, and we have a canonical isomorphism
\[
\Msphext \simto [\Par_{G^\vee_\scO}(\Fl,\K)]
\]
sending $\puM_w$ to $[\imath(\cF_w)]$ for any $w \in \fWext$.

We now consider the functor
\[
\Phi : \Db_{G^\vee_{\scO}}(\Fl,\K) \to \Db_\IW(\Fl,\K)
\]
defined by
\[
\Phi(\cF) = \cF^\IW_{\varsigma} \star^{G^\vee_\scO} \cF,
\]
where $\star^{G^\vee_\scO}$ now denotes the natural convolution bifunctor
\[
\Db_{\IW}(\Gr,\K) \times \Db_{G^\vee_\scO}(\Fl,\K) \to \Db_{\IW}(\Fl,\K).
\]

\begin{lem}
\label{lem:Psi-parity}
The functor $\Phi$ sends parity complexes to parity complexes. Moreover, the map on split Grothendieck groups induced by the restriction
\[
\Phi_{\mathsf{Par}} : \Par_{G^\vee_{\scO}}(\Fl,\K) \to \Par_\IW(\Fl,\K)
\]
is $\varphi$.
\end{lem}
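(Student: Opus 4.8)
The plan is to first establish the abstract ``parity'' claim, and then identify the induced map on Grothendieck groups by a formal bookkeeping argument. For the first claim, observe that $\Phi$ is a composition of familiar operations: by definition $\Phi(\cF) = \cF^\IW_{\varsigma} \star^{G^\vee_\scO} \cF$, where the convolution is taken over $G^\vee_\scO$. The key point is that $\Phi$ factors (up to the equivalence $\imath$ of the previous subsection) as the composite of $\imath^{-1} : \Db_{G^\vee_\scO}(\Fl,\K) \simto \Db_{I^\vee}(\Gr^\op,\K)$, followed by the functor $\Psi$ of Theorem~\ref{thm:BGMRR} applied ``on the $\Gr^\op$-factor,'' followed by a convolution with the free-monodromy object on $\Fl$. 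More concretely: writing $\cF = \imath(\cG)$ for $\cG \in \Db_{I^\vee}(\Gr^\op,\K)$, unwinding the associativity of convolution identifies $\cF^\IW_\varsigma \star^{G^\vee_\scO} \imath(\cG)$ with $(\cF^\IW_\varsigma \star^{G^\vee_\scO} \underline{\K}_{G^\vee_\scO/I^\vee}[\ell(\wf)]) \star^{I^\vee} \cG$, i.e.\ with $\Psi(\cE^\sph_0 \text{-type object}) \star^{I^\vee} \cG$ pushed to $\Fl$. The object $\cF^\IW_\varsigma$ is parity (it was observed in~\S\ref{ss:parity-Gr} to be simultaneously standard, costandard and parity), $\cG$ is parity by hypothesis, and a convolution of parity complexes over $I^\vee$ (or over $G^\vee_\scO$, using that $p$ is good so that the hypotheses of~\cite{jmw} hold) is parity by~\cite[\S 4.1]{jmw}. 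Hence $\Phi(\cF)$ is parity. The one technical obstacle here is to check that $\cF^\IW_\varsigma \star^{G^\vee_\scO}(-)$ genuinely sends parity to parity in the \emph{Iwahori--Whittaker} equivariant setting on $\Fl$; this requires the parity vanishing of the relevant stalks, which follows from Theorem~\ref{thm:BGMRR} (t-exactness of $\Psi$ combined with the tilting statement) in the $\Gr$-case and is transported to $\Fl$ via $\pi^*$ and $\imath$, exactly as in the proof of Lemma~\ref{lem:rho+Af}.

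Granting that $\Phi$ restricts to $\Phi_{\mathsf{Par}} : \Par_{G^\vee_\scO}(\Fl,\K) \to \Par_\IW(\Fl,\K)$, it induces a $\Z[v^{\pm 1}]$-linear map on split Grothendieck groups
\[
[\Phi_{\mathsf{Par}}] : [\Par_{G^\vee_\scO}(\Fl,\K)] \to [\Par_\IW(\Fl,\K)],
\]
$\Z[v^{\pm1}]$-linearity because $\Phi$ commutes with the shift $[1]$. Under the identifications $\Msphext \simto [\Par_{G^\vee_\scO}(\Fl,\K)]$ (sending $\puM_w$ to $[\imath(\cF_w)]$) and $\Masphext \simto [\Par_\IW(\Fl,\K)]$ (sending $\puN_w$ to $[\cE^\IW_w]$), I must show $[\Phi_{\mathsf{Par}}]$ corresponds to $\varphi$. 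Both $[\Phi_{\mathsf{Par}}]$ and $\varphi$ are morphisms of right $\Hext$-modules: for $\varphi$ this is its definition~\eqref{eqn:def-phi}, and for $[\Phi_{\mathsf{Par}}]$ it follows from the fact that $\Phi$ commutes with the right convolution action of $\Par_{I^\vee}(\Fl,\K)$ on both categories (the action on the source is $\star^{I^\vee}$, and $\Phi$ is itself given by a left convolution, so the two commute by associativity of convolution). Since $\Msphext$ is generated as a right $\Hext$-module by the classes $M_\omega$ for $\omega \in \Omega$ (equivalently, by $M_{\id}$ together with the invertible elements $H_\omega$), it suffices to check the two maps agree on these generators.

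So the remaining point is the base case: $[\Phi_{\mathsf{Par}}](M_{\id}) = \varphi(M_{\id})$, and likewise with $M_\omega$. By construction $M_{\id}$ corresponds to $[\imath(\cF_{\id})] = [\underline{\K}_{G^\vee_\scO/I^\vee}[\ell(\wf)]]$, and $\Phi$ applied to this object is, essentially by definition, $\cF^\IW_\varsigma$ pulled back along $\pi : \Fl \to \Gr$ and shifted, which is exactly the object $\cE^\IW_{t_\varsigma}$ identified in the proof of Lemma~\ref{lem:rho+Af}. Hence $[\Phi_{\mathsf{Par}}](M_{\id}) = [\cE^\IW_{t_\varsigma}] = \puN_{t_\varsigma}$, which matches $\varphi(M_{\id}) = N_{\id} \cdot (\text{image of } M_{\id})$; more precisely one recalls that $\varphi$ is defined by $h \mapsto \uN_{t_\varsigma} \cdot h$ and that $\puN_{t_\varsigma} = \uN_{t_\varsigma}$ by Lemma~\ref{lem:rho+Af}, while $M_{\id} \mapsto 1 \in \triv_0 \otimes H_{\id}$, so $\varphi(M_{\id}) = \uN_{t_\varsigma} = \puN_{t_\varsigma}$. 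The case of $M_\omega$ is obtained by multiplying on the right by $H_\omega$, using $\eqref{eqn:uN-varsigma-2}$ and the compatibility of $\Phi$ with convolution by $\cE_\omega$. The main obstacle in the whole argument is genuinely the first paragraph—verifying that the Iwahori--Whittaker convolution $\cF^\IW_\varsigma \star^{G^\vee_\scO}(-)$ preserves parity on $\Fl$; once that is in place, the identification of the Grothendieck-group map with $\varphi$ is a routine matter of matching generators and using $\Hext$-linearity.
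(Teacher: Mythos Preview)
Your argument for the second claim is essentially the paper's: both maps are right $\Hext$-module morphisms (by associativity of convolution), $\Msphext$ is cyclic, so it suffices to check the generator $M_{\id}$; then $\Phi(\underline{\K}_{G^\vee_\scO/I^\vee}[\ell(\wf)]) = \pi^*(\cF^\IW_\varsigma)[\ell(\wf)] = \cE^\IW_{t_\varsigma}$, whose class is $\puN_{t_\varsigma} = \uN_{t_\varsigma} = \varphi(M_{\id})$ by Lemma~\ref{lem:rho+Af}. Your separate treatment of $M_\omega$ is redundant once $\Hext$-linearity is in hand.

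For the first claim the paper simply refers to~\cite[Lemma~4.14]{bgmrr}. Your attempted direct argument has the right endpoint (reduce to ``parity convolved with parity is parity'') but the factorization you write down does not parse. You set $\cG \in \Db_{I^\vee}(\Gr^\op,\K)$ and then form $(\cF^\IW_\varsigma \star^{G^\vee_\scO} \underline{\K}_{G^\vee_\scO/I^\vee}[\ell(\wf)]) \star^{I^\vee} \cG$; but the right factor lives on $\Gr^\op$, not on $\Fl$, so there is no such convolution. Relatedly, the implicit identity $\imath(\cG) = \underline{\K}_{G^\vee_\scO/I^\vee}[\ell(\wf)] \star^{I^\vee} \cG$ is false: $\imath$ is an equivalence of categories, not left convolution by a fixed kernel. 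A correct version of your idea routes through $\Par_{I^\vee}(\Fl,\K)$ instead: every object of $\Par_{G^\vee_\scO}(\Fl,\K)$ is a direct summand of one of the form $\underline{\K}_{G^\vee_\scO/I^\vee} \star^{I^\vee} \cE$ with $\cE \in \Par_{I^\vee}(\Fl,\K)$ (this is exactly the observation used in the proof of the fullness proposition that follows), and then
\[
\Phi\bigl(\underline{\K}_{G^\vee_\scO/I^\vee} \star^{I^\vee} \cE\bigr) \;\cong\; \bigl(\cF^\IW_\varsigma \star^{G^\vee_\scO} \underline{\K}_{G^\vee_\scO/I^\vee}\bigr) \star^{I^\vee} \cE \;\cong\; \pi^*(\cF^\IW_\varsigma) \star^{I^\vee} \cE,
\]
which is a convolution of the parity object $\pi^*(\cF^\IW_\varsigma) = \cE^\IW_{t_\varsigma}[-\ell(\wf)] \in \Par_\IW(\Fl,\K)$ with $\cE \in \Par_{I^\vee}(\Fl,\K)$, hence parity by~\cite[\S4.1]{jmw}. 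So the strategy is sound; only the bookkeeping of which category each object lives in needs correcting.
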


\begin{proof}
The proof of the first claim is similar to that of~\cite[Lemma~4.14]{bgmrr}. For the second claim, we observe that the map induced by $\Phi_{\mathsf{Par}}$ is clearly a morphism of right $\Hext$-modules. Since $\Msphext$ is a cyclic module, this reduces the proof to checking that the image of $[\underline{\K}_{G^\vee_\scO/I^\vee}[\ell(\wf)]]$ corresponds to $\varphi(M_e)=\uN_{t_\varsigma}$ under~\eqref{eqn:Masphext}. However we have
\[
\Phi \bigl( \underline{\K}_{G^\vee_\scO/I^\vee}[\ell(\wf)] \bigr) = \pi^*(\cF^\IW_{\varsigma}) [\ell(\wf)],
\]
where $\pi : \Fl \to \Gr$ is the projection. As observed in the proof of Lemma~\ref{lem:rho+Af}, the right-hand side is $\cE^{\IW}_{t_\varsigma}$, whose class in the Grothendieck group corresponds to $\puN_{t_\varsigma}$ by definition. The claim follows, using the first equality in Lemma~\ref{lem:rho+Af}.
\end{proof}

The key step in our proof of Theorem~\ref{thm:main} is the following claim.

\begin{prop}
The functor $\Phi_{\mathsf{Par}}$ from Lemma~{\rm \ref{lem:Psi-parity}}
is full.
\end{prop}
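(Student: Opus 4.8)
The plan is to deduce the fullness of $\Phi_{\mathsf{Par}}$ from the fullness of $\Psi$ on parity complexes on the affine Grassmannian, i.e.\ from Corollary~\ref{cor:Phi-full}, transporting the latter along the projection $\pi\colon\Fl\to\Gr$. Since $\pi$ is a $G^\vee/B^\vee$-bundle, hence proper and smooth with fibres whose cohomology is concentrated in even degrees, the functors $R\pi_*$ and $\pi^*$ preserve parity complexes, in the $G^\vee_\scO$-equivariant as well as in the Iwahori--Whittaker setting; for the latter one uses in addition that $\pi$ is $I^{\vee,+}_{\mathrm{u}}$-equivariant, so that these functors respect the Iwahori--Whittaker condition. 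Recall also that $\imath(\cF_{\id})=\pi^*(\cE^\sph_0)[\ell(\wf)]$, where $\cE^\sph_0$ is the skyscraper unit on $\Gr$.

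First I would introduce the full subcategory $\mathcal{C}\subseteq\Par_{G^\vee_\scO}(\Fl,\K)$ of objects $\cG$ for which $\Phi$ induces a surjection $\Hom(\cG,\cG')\to\Hom(\Phi\cG,\Phi\cG')$ for \emph{every} $\cG'\in\Par_{G^\vee_\scO}(\Fl,\K)$, the goal being to show $\mathcal{C}=\Par_{G^\vee_\scO}(\Fl,\K)$. It is immediate that $\mathcal{C}$ is closed under direct sums, direct summands and cohomological shifts. It is also closed under right convolution with $\cE_s$ ($s\in\Saff$) and with $\cE_\omega$ ($\omega\in\Omega$): indeed $\Phi$ commutes with the right convolution action of $\Par_{I^\vee}(\Fl,\K)$ (associativity of convolution), right convolution with $\cE_s$ is biadjoint (up to a shift) to right convolution with $\D\cE_s\cong\cE_s$, and right convolution with $\cE_\omega$ is invertible with inverse right convolution with $\cE_{\omega^{-1}}$; applying the corresponding (co)units, which are intertwined by $\Phi$, shows that surjectivity of the $\Phi$-map for the pair $(\cG\star\cE_s,\cG')$ is equivalent to surjectivity for $(\cG,\cG'\star\cE_s)$ up to shift, and similarly for $\cE_\omega$. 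Finally, since $\Msphext\simto[\Par_{G^\vee_\scO}(\Fl,\K)]$ is cyclic over $\Hext$, generated by $M_{\id}=[\imath(\cF_{\id})]$, every object of $\Par_{G^\vee_\scO}(\Fl,\K)$ is a direct summand of a direct sum of shifts of objects $\imath(\cF_{\id})\star\cE_{s_1}\star\cdots\star\cE_{s_k}\star\cE_\omega$. Combining these observations, it suffices to prove that $\imath(\cF_{\id})\in\mathcal{C}$.

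To handle this, I would fix $\cG'\in\Par_{G^\vee_\scO}(\Fl,\K)$ and observe that $R\pi_*\cG'\in\Par_{G^\vee_\scO}(\Gr,\K)$. The adjunction $(\pi^*,R\pi_*)$ gives $\Hom(\pi^*\cE^\sph_0,\cG')\cong\Hom(\cE^\sph_0,R\pi_*\cG')$. On the Iwahori--Whittaker side, proper base change along the Cartesian square comparing the convolution (multiplication) morphisms over $\Fl$ and over $\Gr$ yields natural isomorphisms $\Phi\circ\pi^*\cong\pi^*\circ\Psi$ and $R\pi_*\circ\Phi\cong\Psi\circ R\pi_*$; together with the adjunction $(\pi^*,R\pi_*)$ in the Iwahori--Whittaker categories this gives $\Hom(\Phi\pi^*\cE^\sph_0,\Phi\cG')\cong\Hom(\pi^*\Psi\cE^\sph_0,\Phi\cG')\cong\Hom(\Psi\cE^\sph_0,\Psi(R\pi_*\cG'))$. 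Under these two chains of isomorphisms, the map induced by $\Phi$ is carried to the map induced by $\Psi$ on $\Hom(\cE^\sph_0,R\pi_*\cG')\to\Hom(\Psi\cE^\sph_0,\Psi(R\pi_*\cG'))$, which is surjective by Corollary~\ref{cor:Phi-full}. Hence $\imath(\cF_{\id})\in\mathcal{C}$, so $\mathcal{C}=\Par_{G^\vee_\scO}(\Fl,\K)$ and $\Phi_{\mathsf{Par}}$ is full.

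The step I expect to be the main obstacle is the last compatibility assertion: verifying that the morphism of $\Hom$-spaces induced by $\Phi$ is indeed carried, by the adjunction-and-base-change isomorphisms above, to the morphism induced by $\Psi$. This amounts to a formal but somewhat delicate diagram chase with the units and counits of $(\pi^*,R\pi_*)$ on the $G^\vee_\scO$-equivariant and Iwahori--Whittaker categories together with the base-change isomorphisms $\Phi\pi^*\cong\pi^*\Psi$ and $R\pi_*\Phi\cong\Psi R\pi_*$; one also needs the analogous, easier, intertwining used in the second paragraph for the biadjunction of right convolution with $\cE_s$. The cohomological shifts appearing in that biadjunction are irrelevant for surjectivity and can be tracked at the end.
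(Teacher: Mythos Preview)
Your proof is correct and follows essentially the same approach as the paper's: reduce one variable of the $\Hom$-pair to the generator $\underline{\K}_{G^\vee_\scO/I^\vee}=\pi^*\cE^\sph_0$ using adjoints of right convolution, then pass to $\Gr$ via the $\pi^*/\pi_*$ (equivalently $\pi_!/\pi^!$) adjunction and invoke Corollary~\ref{cor:Phi-full}. The only cosmetic difference is that the paper fixes the \emph{target} variable (using that $(-)\star^{I^\vee}\cE$ has a left adjoint of the same form) and then uses $\pi^*\cong\pi^![-2\ell(\wf)]$ together with $\pi_!\Phi\cong\Psi\pi_!$, whereas you fix the \emph{source} variable and use $R\pi_*\Phi\cong\Psi R\pi_*$; since $\pi$ is smooth and proper these are the same adjunction and the compatibility you flag as the ``main obstacle'' is exactly the naturality the paper implicitly uses.
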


\begin{proof}
It is easily seen that any object in $ \Par_{G^\vee_{\scO}}(\Fl,\K)$ is a direct sum of direct summands of objects of the form $\underline{\K}_{G^\vee_\scO/I^\vee} \star^{I^\vee} \cE$ with $\cE$ in $\Par_{I^\vee}(\Fl,\K)$, where $\star^{I^\vee}$ is the natural convolution bifunctor
\[
 \Par_{G^\vee_{\scO}}(\Fl,\K) \times \Par_{I^\vee}(\Fl,\K) \to \Par_{G^\vee_{\scO}}(\Fl,\K).
\]
Now any functor of the form $(-) \star^{I^\vee} \cE$ (with $\cE$ in $\Par_{I^\vee}(\Fl,\K)$) admits a left adjoint of the form $(-) \star^{I^\vee} \cE'$ with $\cE'$ in $\Par_{I^\vee}(\Fl,\K)$, hence this remark reduces the proof of fullness of $\Phi_{\mathsf{Par}}$ to proving that for any $\cF$ in $\Par_{G^\vee_{\scO}}(\Fl,\K)$ the map
\[
\Hom^\bullet_{\Db_{G^\vee_{\scO}}(\Fl,\K)}(\cG, \underline{\K}_{G^\vee_\scO/I^\vee}) \to \Hom^\bullet_{\Db_{\IW}(\Fl,\K)}(\Phi(\cG), \Phi(\underline{\K}_{G^\vee_\scO/I^\vee}))
\]
induced by $\Phi$ is surjective. 
If $\pi$ is as in the proof of Lemma~\ref{lem:rho+Af} (or of Lemma~\ref{lem:Psi-parity}), then we have
\[
\underline{\K}_{G^\vee_\scO/I^\vee} = \pi^* \cE^{\sph}_0, \quad \Phi(\underline{\K}_{G^\vee_\scO/I^\vee}) = \pi^*(\cF_\varsigma^\IW).
\]
Since $\pi^* \cong \pi^![-2\ell(\wf)]$, using adjunction we deduce isomorphisms
\begin{align*}
\Hom^\bullet_{\Db_{G^\vee_{\scO}}(\Fl,\K)}(\cG, \underline{\K}_{G^\vee_\scO/I^\vee}) &\cong \Hom^{\bullet-2\ell(\wf)}_{\Db_{G^\vee_{\scO}}(\Gr,\K)}(\pi_!(\cG), \cE^\sph_0), \\
 \Hom^\bullet_{\Db_{\IW}(\Fl,\K)}(\Phi(\cG), \Phi(\underline{\K}_{G^\vee_\scO/I^\vee})) &\cong  \Hom^{\bullet-2\ell(\wf)}_{\Db_{\IW}(\Gr,\K)}(\pi_! \Phi(\cG), \cF_\varsigma^\IW).
\end{align*}
Now we have $\pi_! \Phi(\cG) \cong \Psi(\pi_! \cG)$, where $\Psi$ is as in Theorem~\ref{thm:BGMRR}; hence we are reduced to proving that the morphism
\[
\Hom^{\bullet}_{\Db_{G^\vee_{\scO}}(\Gr,\K)}(\pi_!(\cG), \cE^\sph_0) \to \Hom^{\bullet}_{\Db_{\IW}(\Gr,\K)}(\Psi(\pi_! \cG), \cF_\varsigma^\IW)
\]
induced by $\Psi$ is surjective. However, $\pi_! \cG$ is parity, so that the claim follows from Corollary~\ref{cor:Phi-full}.
\end{proof}

\subsection{Proof of Theorem~\ref{thm:main}}

Since the functor $\Phi$ is a full functor between Krull-Schmidt categories, it must send indecomposable objects to indecomposable
objects. Indeed, this follows from the observation that any quotient
of a local ring is local. Using 
support considerations
it is not difficult to deduce that for any $w \in \fWext$ we have
\[
\Phi(\imath(\cF_w)) \cong \cE^\IW_{t_\varsigma w}.
\]
Passing to classes in the split Grothendieck group we deduce the formula of Theorem~\ref{thm:main}.

\section{Application: a character formula for simple \texorpdfstring{$G$}{G}-modules}


In this section we return to the setting of Sections~\ref{sec:intro}--\ref{sec:Hext}; in particular, $G$ is a connected reductive algebraic group with simply connected derived subgroup over an algebraically closed field $\bk$ of characteristic $p$. We will assume that $p>h$, where $h$ is the Coxeter number of $G$. (In particular, this condition implies that $p$ is good for $G$, so that Theorem~\ref{thm:main} is applicable.)

\subsection{The tilting character formula}

We set
\[
 \MMasphext := \Z \otimes_{\Z[v^{\pm 1}]} \Masphext,
\]
where $\Z$ is considered as a $\Z[v^{\pm 1}]$-module via $v \mapsto 1$. This $\Z$-module is a right module over
\[
 \Z \otimes_{\Z[v^{\pm 1}]} \Hext = \Z[\Wext].
\]

We will denote by $\Rep(G)$ the abelian category of finite-dimensional algebraic $G$-modules. The simple objects in this category are labelled in a natural way by the subset $\bX^+ \subset \bX$ of dominant weights; as in~\S\ref{ss:intro-simple} we will denote by $\Sim(\lambda)$ the simple $G$-module of highest weight $\lambda \in \bX^+$.

We consider the dilated and shifted action of $\Waff$ on $\bX$ defined by
\[
 w \cdot_p \lambda = w(\lambda+\varsigma)-\varsigma, \quad t_\mu \cdot_p \lambda = \lambda + p\mu
\]
for $w \in \Wf$ and $\lambda,\mu \in \bX$. (It is a classical fact that this action does not depend on the choice of $\varsigma$.) We then denote by $\Rep_\varnothing(G)$ the ``extended principal block'' of $\Rep(G)$, i.e.~the Serre subcategory generated by the simple objects $\Sim(w \cdot_p 0)$ with $w \in \fWext$. (Here, under our assumptions, for $w \in \Wext$ we have $w \cdot_p 0 \in \bX^+$ iff $w \in \fWext$.) 

For $\lambda \in \bX^+$ we also denote by $\Delta(\lambda)$, $\nabla(\lambda)$ and $\Til(\lambda)$ the Weyl, induced, and indecomposable tilting $G$-modules of highest weight $\lambda$ (see~\cite[\S 3.1]{rw}). If $\lambda = w \cdot_p 0$ for some $w \in \fWext$, then these objects belong to $\Rep_\varnothing(G)$.

In the following lemma, $T_\mu^\nu$ is the translation functor from the $\mu$-block to the $\nu$-block of $\Rep(G)$, see~\cite[Chapter~II.7]{jantzen}. 
See also Remark~\ref{rmk:Omega} for the definition of $\omega_\varsigma$.

\begin{lem}
\label{lem:Til-rho+Afund}
 For any $\omega \in \Omega$ we have
 \[
\Til(p\varsigma + \omega \cdot_p 0) \cong T_{\omega_\varsigma \cdot_p(-\varsigma)}^{\omega_\varsigma \omega \cdot_p 0} \bigl( \Sim ((p-1)\varsigma) \bigr).
 \]
 Moreover, for any $\lambda \in \bX^+$
we have
 \[
  (\Til(p\varsigma + \omega \cdot_p 0) : \nabla(\lambda)) = \begin{cases}
                                       1 & \text{if $\lambda=t_\varsigma x \omega \cdot_p 0$ for some $x \in \Wf$;} \\
                                       0 & \text{otherwise.}
                                      \end{cases}
 \]

\end{lem}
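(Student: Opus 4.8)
The plan is to recognise the right-hand side as a translation of the Steinberg module and to read off both statements from Jantzen's translation combinatorics. Write $\mathrm{St} := \Sim((p-1)\varsigma)$. Since $\langle (p-1)\varsigma, \alpha^\vee \rangle = p-1$ for all $\alpha \in \Sigma$, this is (a twist by a character trivial on the derived subgroup of) the Steinberg module, so
\[
\mathrm{St} = \Delta((p-1)\varsigma) = \nabla((p-1)\varsigma) = \Til((p-1)\varsigma) ;
\]
in particular $\mathrm{St}$ is at once a Weyl module, a dual Weyl module and a tilting module. Set $\mu_0 := \omega_\varsigma \cdot_p (-\varsigma)$ and $\nu_0 := \omega_\varsigma \omega \cdot_p 0$, and let $x_\varsigma \in \Waff$ be the element of Remark~\ref{rmk:Omega}, so that $x_\varsigma \omega_\varsigma = t_\varsigma$. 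Then $(p-1)\varsigma = t_\varsigma \cdot_p (-\varsigma) = x_\varsigma \cdot_p \mu_0$, so $\mathrm{St}$ lies in the block labelled $\mu_0$; the isotropy group of $\mu_0$ for $\cdot_p$ is $W_{\mu_0} = \omega_\varsigma \Wf \omega_\varsigma^{-1}$, a parabolic subgroup of $\Waff$ isomorphic to $\Wf$, while $\nu_0 = (\omega_\varsigma \omega) \cdot_p 0$ is $p$-regular (and dominant, as $\omega_\varsigma\omega \in \Omega \subset \fWext$). Thus $T := T_{\mu_0}^{\nu_0}$ is translation out of a $\Wf$-singular facet, applied to $\mathrm{St}$.

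I would first note that translation functors preserve the property of having a $\Delta$-filtration and that of having a $\nabla$-filtration; since $\mathrm{St}$ has both kinds of filtration, $T(\mathrm{St})$ is tilting. Jantzen's description of translation out of a wall (\cite[II.7]{jantzen}), applied to $\mathrm{St} = \nabla((p-1)\varsigma) = \nabla(x_\varsigma \cdot_p \mu_0)$, shows that $T(\mathrm{St})$ has a $\nabla$-filtration whose subquotients are the $\nabla((x_\varsigma z) \cdot_p \nu_0)$ for $z \in W_{\mu_0}$, each occurring exactly once. Writing $z = \omega_\varsigma y \omega_\varsigma^{-1}$ with $y \in \Wf$ and using $x_\varsigma \omega_\varsigma = t_\varsigma$ and $\nu_0 = \omega_\varsigma \omega \cdot_p 0$, one finds $(x_\varsigma z) \cdot_p \nu_0 = t_\varsigma y \omega \cdot_p 0$; hence the subquotients are exactly $\nabla(t_\varsigma y \omega \cdot_p 0)$ for $y \in \Wf$, each with multiplicity one, which is the content of the ``Moreover'' assertion once the first isomorphism is known. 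For the highest weight, $t_\varsigma y \omega \cdot_p 0 = y(\omega \cdot_p 0 + \varsigma) + (p-1)\varsigma$, and since $\omega \cdot_p 0 \in \bX^+$ (because $\omega \in \Omega \subset \fWext$) the weight $\omega \cdot_p 0 + \varsigma$ is dominant; so the maximum over $y$ is attained at $y = e$ and equals $p\varsigma + \omega \cdot_p 0$ (which is dominant, being $t_\varsigma \omega \cdot_p 0$ with $t_\varsigma \omega \in \fWext$). Therefore $T(\mathrm{St}) \cong \Til(p\varsigma + \omega \cdot_p 0) \oplus M'$ with $M'$ tilting and all of its highest weights strictly below $p\varsigma + \omega \cdot_p 0$.

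The essential point is that $M' = 0$, i.e. that $T(\mathrm{St})$ is indecomposable; this is the theorem of Jantzen~\cite{jantzen-darstellungen}, in the interpretation of Donkin~\cite{donkin}, referred to in the introduction, which one may simply quote. Alternatively I would argue through $G_1 T$. Translation functors are compatible with restriction to $G_1 T$, and $\mathrm{St}|_{G_1 T} = \hZ((p-1)\varsigma) = \hSim((p-1)\varsigma) = \hQ((p-1)\varsigma)$ is injective over $G_1 T$, so $T(\mathrm{St})|_{G_1 T}$ is injective over $G_1 T$. Its socle is computed by biadjunction with translation onto the wall $T' := T_{\nu_0}^{\mu_0}$: for a simple $\hSim(\nu)$ in the $\nu_0$-block one has $\Hom_{G_1 T}(\hSim(\nu), T(\mathrm{St})|_{G_1 T}) \cong \Hom_{G_1 T}(T'(\hSim(\nu)), \hSim((p-1)\varsigma))$, and $T'$ sends $\hSim(w \cdot_p \nu_0)$ to $\hSim(w \cdot_p \mu_0)$ when $w$ is of maximal length in $w W_{\mu_0}$ and to $0$ otherwise. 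Writing the weights collapsing to $(p-1)\varsigma$ as $(x_\varsigma z) \cdot_p \nu_0$ ($z \in W_{\mu_0}$), a computation of lengths via~\eqref{eqn:length} shows that $x_\varsigma$ is the longest element of $x_\varsigma W_{\mu_0}$, so $x_\varsigma z$ is longest only for $z = e$, i.e. the $\Hom$ above is nonzero only for $\nu = x_\varsigma \cdot_p \nu_0 = p\varsigma + \omega \cdot_p 0$. Hence $T(\mathrm{St})|_{G_1 T}$ is an injective $G_1 T$-module with simple socle $\hSim(p\varsigma + \omega \cdot_p 0)$, so it is $\hQ(p\varsigma + \omega \cdot_p 0)$ and in particular $T(\mathrm{St})$ is indecomposable over $G$; being tilting with highest weight $p\varsigma + \omega \cdot_p 0$, it is $\Til(p\varsigma + \omega \cdot_p 0)$. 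This proves the first isomorphism, and together with the $\nabla$-filtration found above, the second assertion. The main obstacle is precisely this indecomposability statement: it is the one step that is not formal translation-functor bookkeeping, and it relies on the special self-duality and $G_1 T$-injectivity of the Steinberg module.
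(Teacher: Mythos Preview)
Your overall structure is right and matches the paper's: recognise $T(\mathrm{St})$ as tilting, compute its $\nabla$-filtration via translation combinatorics, identify the highest weight, and then show indecomposability. The $\nabla$-multiplicity computation and the highest-weight identification are fine. The difficulty is the indecomposability step, and here both of your proposed justifications are problematic.

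Your first justification mis-cites the literature. The ``theorem of Jantzen, in the interpretation of Donkin, referred to in the introduction'' is the statement (Theorem~\ref{thm:jantzen} in the paper) that certain indecomposable tilting $G$-modules remain indecomposable over $G_1T$; it requires $p\geq 2h-2$ and goes in the wrong logical direction for what you need. The paper instead invokes \cite[Proposition~E.11]{jantzen}, which says directly that translating an indecomposable tilting module out of a wall yields an indecomposable tilting module; this needs only $p>h$ and makes the whole argument a two-line citation. The paper then reads off the $\nabla$-multiplicities from \cite[Proposition~II.7.13]{jantzen}.

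Your alternative $G_1T$ argument contains a genuine error. You assert that $T'$ kills $\hSim(w\cdot_p\nu_0)$ unless $w$ is \emph{maximal} in $wW_{\mu_0}$, and then conclude that the socle is $\hSim(p\varsigma+\omega\cdot_p 0)$. But the correct criterion is the upper-closure condition of \cite[II.7.15, II.9.22]{jantzen}, and since $\omega_\varsigma$ does not preserve the upper/lower closure decomposition of $\overline{\Afund}$, the facet $\mu_0=\omega_\varsigma\cdot_p(-\varsigma)$ is \emph{not} in the lower closure of $\Afund$; the length condition you state is therefore wrong. (Already in type $\mathbf{A}_1$ with $\omega=e$: here $\mu_0=(p-1)\rho$ is the \emph{upper} wall, $x_\varsigma=s_1$ is indeed maximal in $\{e,s_1\}$, yet $T'\hSim(p\rho)=0$ while $T'\hSim((p-2)\rho)=\mathrm{St}$.) The actual socle is $\hSim(t_\varsigma\wf\omega\cdot_p 0)$, as the paper establishes later in Lemma~\ref{lem:T-Q-rho+Afund}. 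Your broad strategy---exactly one coset element satisfies the upper-closure condition, hence the socle is simple---can be repaired, but then you are essentially reproving Lemma~\ref{lem:T-Q-rho+Afund} inside the proof of Lemma~\ref{lem:Til-rho+Afund}, which is both circular in the paper's organisation and far more work than citing E.11.
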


\begin{proof}
We have $p\varsigma + \omega \cdot_p 0 = t_\varsigma \omega_{\varsigma}^{-1} \cdot_p (\omega_\varsigma \omega \cdot_p 0)$, and $(p-1)\varsigma = t_\varsigma \omega_{\varsigma}^{-1} \cdot_p (\omega_\varsigma \cdot_p(-\varsigma))$. Moreover, $p\varsigma + \omega \cdot_p 0$ is maximal among the elements of the form $t_\varsigma \omega_{\varsigma}^{-1} \cdot_p (\omega_\varsigma x \omega \cdot_p 0)$ with $x \in \Wf$. Hence by~\cite[Proposition~E.11]{jantzen} we have
 \[
\Til(p\varsigma + \omega \cdot_p 0) \cong T_{\omega_\varsigma \cdot_p(-\varsigma)}^{\omega_\varsigma \omega \cdot_p 0} \bigl( \Til ((p-1)\varsigma) \bigr).
 \] 
Now by~\cite[Remark in~\S II.3.19]{jantzen}, the Steinberg module $\Sim((p-1)\varsigma)$ is isomorphic to $\Delta((p-1)\varsigma)$ and to $\nabla((p-1)\varsigma)$, hence is tilting. It is also clearly indecomposable, so that $\Til((p-1)\varsigma)=\Sim((p-1)\varsigma)$. The first claim follows. 

The second claim follows from the first one (and the fact that $\Sim((p-1)\varsigma)=\nabla((p-1)\varsigma)$) in view of~\cite[Proposition~II.7.13]{jantzen}.
\end{proof}

For any $s \in S$ we choose a weight $\mu_s$ as in~\cite[\S 3.1]{rw} (i.e. a ``generic'' weight on the $s$-wall of the fundamental alcove for the dilated and shifted action) and consider the exact selfadjoint endofunctor
\[
 \Theta_s := \bigoplus_{\omega \in \Omega} T_{\omega \cdot_p \mu_s}^{\omega \cdot_p 0} T_{\omega \cdot_p 0}^{\omega \cdot_p \mu_s}
\]
of $\Rep_\varnothing(G)$.
(Here the sum might be infinite but, for each object $V$ of $\Rep_\varnothing(G)$, only finitely many of these functors applied to $V$ do not vanish; so the functor $\Theta_s$ is well defined.)
If we denote by $[\Rep_\varnothing(G)]$ the Grothendieck group of the abelian category $\Rep_\varnothing(G)$, and by $[M]$ the class of an object $M$, then it 
is well known (see e.g.~\cite[\S 1.2]{rw}) 
that the assignment $1 \otimes N_w \mapsto [\Delta(w \cdot_p 0)]=[\nabla(w \cdot_p 0)]$ induces an isomorphism of abelian groups
\begin{equation}
\label{eqn:Groth-group}
 \MMasphext \simto [\Rep_\varnothing(G)].
\end{equation}
Using~\cite[Propositions~II.7.11 and~II.7.12]{jantzen} one can check that,
under this identification, the endomorphism of the right-hand side induced by $\Theta_s$ corresponds to the action of $(1 + s)$ on the left-hand side.

The following statement was conjectured in~\cite{rw} (see in particular~\cite[Corollary~1.4.1]{rw}) and proved in~\cite{amrw}.

\begin{thm}
\label{thm:amrw}
 Under the isomorphism~\eqref{eqn:Groth-group}, $1 \otimes \puN_w$ is sent to $[\Til(w \cdot_p 0)]$ for any $w \in \fWext$.
\end{thm}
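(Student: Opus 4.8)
The plan is to obtain Theorem~\ref{thm:amrw} as a translation, through the identifications of~\S\ref{ss:categorification}, of the main results of~\cite{amrw}. Recall that under the isomorphism $\Masphext \simto [\Par_\IW(\Fl,\K)]$ of~\S\ref{ss:categorification} the $p$-canonical basis element $\puN_w$ corresponds to the class $[\cE_w^\IW]$ of the indecomposable Iwahori--Whittaker parity complex attached to $w \in \fWext$. On the representation-theoretic side, the construction of~\cite{rw} (building on the monoidal description of the principal block due to Bezrukavnikov--Riche) provides, after passage to suitable mixed (i.e.\ graded) variants of all the categories involved, a triangulated functor from a mixed Iwahori--Whittaker category on $\Fl$ to the mixed derived category of $\Rep_\varnothing(G)$, intertwining the convolution action of the Hecke category with the action of the wall-crossing functors $\Theta_s$. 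The main results of~\cite{amrw} show that this functor is \emph{degrading} (an equivalence after forgetting gradings) and identify $\cE_w^\IW$ with $\Til(w \cdot_p 0)$ for every $w \in \fWext$; in particular the graded stalk multiplicities of $\cE_w^\IW$ match the graded $\nabla$-multiplicities of $\Til(w \cdot_p 0)$.

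Granting this, the statement follows by an elementary Grothendieck-group argument. Expanding $\puN_w = \sum_{x \in \fWext} \ppn_{x,w} N_x$ in terms of the antispherical $p$-Kazhdan--Lusztig polynomials and specialising $v \mapsto 1$, the assignment~\eqref{eqn:Groth-group} turns the desired identity $1 \otimes \puN_w \mapsto [\Til(w \cdot_p 0)]$ into the numerical statement $(\Til(w\cdot_p 0) : \nabla(x \cdot_p 0)) = \ppn_{x,w}(1)$ for all $x \in \fWext$. Since $\ppn_{x,w}(1)$ is the total dimension of the stalk of $\cE_w^\IW$ at the orbit labelled by $x$, while the left-hand side is the total $\nabla(x\cdot_p 0)$-multiplicity in a good filtration of $\Til(w\cdot_p 0)$, this is precisely the shadow, after forgetting the grading, of the equivalence supplied by~\cite{amrw}. (That $1\otimes N_w \mapsto [\Delta(w\cdot_p 0)]=[\nabla(w\cdot_p 0)]$ is~\eqref{eqn:Groth-group} itself.)

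There remains a point of bookkeeping: the results of~\cite{rw,amrw} are most naturally phrased for the principal block $\Rep_0(G)$ and the non-extended antispherical module $\Masph$, whereas here we use the extended block and $\Masphext$. Since $\Wext = \Omega \ltimes \Waff$, both sides split compatibly into pieces indexed by $\Omega$: on the geometric side via convolution with the invertible objects $\cE_\omega^\IW$, and on the representation-theoretic side via translation between the blocks of $\Rep_\varnothing(G)$, which are permuted by $\Omega$. The extended statement then follows from the non-extended one by transport along this $\Omega$-action, using the normalisations $\puN_{w\omega} = \puN_w \cdot H_\omega$ fixed in~\S\ref{ss:sph-asph}, and no new difficulty arises. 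The genuine obstacle is, of course, the input from~\cite{amrw} itself — the proof that the Riche--Williamson functor is degrading and matches parity complexes with tilting modules, which rests on monodromic Koszul duality for the Hecke category and on the formalism of mixed modular derived categories — and this we do not reprove here.
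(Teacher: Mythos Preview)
Your proposal is essentially correct and aligned with the paper's treatment: the paper does not give an independent proof of this theorem at all, but simply records it as the main result of~\cite{amrw} (conjectured in~\cite{rw}), together with the remark that the extension from $\Waff$ to $\Wext$ is immediate. Your sketch of how the identification goes through the categorification of~\S\ref{ss:categorification} and the degrading equivalence of~\cite{amrw}, and your handling of the $\Omega$-bookkeeping, are accurate elaborations of what the paper leaves implicit in that citation.
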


\begin{rmk}
In~\cite{rw,amrw} we work with $\Waff$ instead of $\Wext$; but the extension is immediate (see e.g.~\cite{ar-survey}).
\end{rmk}

\subsection{$G_1 T$-modules}
\label{ss:G1T}

Let $\dot{G}$ be the Frobenius twist of $G$, and let $\mathrm{Fr} : G \to \dot{G}$ be the Frobenius morphism. We denote by $G_1$ the kernel of $\mathrm{Fr}$ (a normal finite subgroup scheme of $G$) and by $G_1 T$, resp.~$G_1 B^+$, the inverse image of the Frobenius twist $\dot{T}$ of $T$, resp.~$\dot{B}^+$ of $B^+$, under $\mathrm{Fr}$. (Here, $B^+$ is the Borel subgroup of $G$ opposite to $B$ with respect to $T$.) We will identify the characters of $\dot{T}$ with $\bX$, in such a way that the composition of the Frobenius morphism $T \to \dot{T}$ with the character $\lambda$ of $\dot{T}$ is the character $p \lambda$ of $T$. We use similar conventions for $\dot{B}^+$.

We will denote by $\Rep(G_1 T)$ the category of finite-dimensional algebraic $G_1 T$-modules. As explained in~\cite[Proposition~II.9.6]{jantzen}, the simple objects in $\Rep(G_1 T)$ are in a canonical bijection with $\bX$; the simple module corresponding to $\lambda$ will be denoted $\hSim(\lambda)$. If $\lambda$ is dominant and restricted then $\hSim(\lambda)$ is the restriction of $\Sim(\lambda)$ to $G_1 T$, and if $\lambda,\mu \in \bX$ we have
\begin{equation}
\label{eqn:translation-simples}
\hSim(\lambda+p\mu) \cong \hSim(\lambda) \otimes \bk_{\dot T}(\mu),
\end{equation}
where $\bk_{\dot T}(\mu)$ is seen as a $G_1 T$-module via the surjection $G_1 T \to \dot T$. In particular, these simple objects are completely determined by the objects $(\Sim(\lambda) : \lambda \in \bX^+)$.

The category $\Rep(G_1 T)$ also contains the \emph{baby Verma modules}
\[
 \hZ(\lambda) = \mathrm{Coind}_{B^+}^{G_1 B^+} \bigl( \bk_{B^+} (\lambda) \bigr)
\]
for $\lambda \in \bX$.
With these conventions, we have a surjection of $G_1 T$-modules $\hZ(\lambda) \twoheadrightarrow \hSim(\lambda)$, see~\cite[Proposition~9.6(d)]{jantzen}. We also have canonical isomorphisms
\begin{equation}
\label{eqn:translation-babyVerma}
  \hZ(\lambda + p\mu) \cong \hZ(\lambda) \otimes \bk_{\dot{T}}(\mu)
\end{equation}
for $\lambda, \mu \in \bX$.

The category $\Rep(G_1 T)$
admits a ``block'' decomposition similar to that of $\Rep(G)$; see e.g.~\cite[\S II.9.19]{jantzen}. Hence we can consider $\Rep_\varnothing(G_1 T)$, the ``extended block'' of the weight $0$, i.e.~the Serre subcategory generated by the simple objets $\hSim(\lambda)$ with $\lambda \in \Wext \cdot_p 0$. If $\lambda \in \Wext \cdot_p 0$ then 
$\hZ(\lambda)$ also belongs to $\Rep_\varnothing(G_1 T)$.
It is clear from the Steinberg tensor product formula~\cite[Proposition~II.3.16]{jantzen} that the restriction functor $\Rep(G) \to \Rep(G_1 T)$ restricts to a functor
\begin{equation}
\label{eqn:Res-0}
 \Rep_\varnothing(G) \to \Rep_\varnothing(G_1 T).
\end{equation}
The image of a $G$-module $M$ under this functor will sometimes be denoted $M_{|G_1 T}$.

As explained in~\cite[\S II.9.22]{jantzen}, the translation functors can be canonically ``lifted'' to the category $\Rep(G_1 T)$. In particular, this means that there exists an exact selfadjoint endofunctor of $\Rep_\varnothing(G_1 T)$, which for simplicity will also be denoted $\Theta_s$, and such that the following diagram commutes:
\[
 \xymatrix@C=2cm{
 \Rep_\varnothing(G) \ar[d]_-{\eqref{eqn:Res-0}} \ar[r]^-{\Theta_s} & \Rep_\varnothing(G) \ar[d]^-{\eqref{eqn:Res-0}} \\
 \Rep_\varnothing(G_1 T) \ar[r]^-{\Theta_s} & \Rep_\varnothing(G_1 T).
 }
\]

\begin{lem}
\label{lem:Theta-hT}
For any $w \in \Wext$ and
any $s \in \Saff$, in the Grothendieck group of $\Rep_\varnothing(G_1 T)$ we have
 \[
  [\Theta_s(\hZ(w \cdot_p 0))] = [\hZ(w \cdot_p 0)] + [\hZ(ws \cdot_p 0)].
 \]
\end{lem}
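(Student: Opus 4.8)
The plan is to reduce the statement to the behaviour of translation functors on baby Verma modules, and then to a combinatorial identity relating the dilated and shifted action on weights with the right action of $\Saff$ on alcoves. First I would observe that, since $p > h$, the weight $0$ is regular for the dilated and shifted action of $\Waff$, so that $w \mapsto w \cdot_p 0$ is a bijection from $\Wext$ onto $\Wext \cdot_p 0$, and the classes $\{[\hZ(w \cdot_p 0)] : w \in \Wext\}$ form a $\Z$-basis of the Grothendieck group of $\Rep_\varnothing(G_1 T)$; in particular both sides of the asserted equality make sense, and it suffices to compute $[\Theta_s(\hZ(w \cdot_p 0))]$ for a fixed $w \in \Wext$. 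Writing $w = w' \omega$ with $w' \in \Waff$ and $\omega \in \Omega$, the module $\hZ(w \cdot_p 0)$ lies in the block of $\Rep_\varnothing(G_1 T)$ indexed by $\omega$, and $\Theta_s$ acts on it through the single summand $T^{\omega \cdot_p 0}_{\omega \cdot_p \mu_s} \circ T^{\omega \cdot_p \mu_s}_{\omega \cdot_p 0}$.

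The heart of the argument is the analysis of these two translation steps, based on the lift of translation functors to $\Rep(G_1 T)$ from~\cite[\S II.9.22]{jantzen} and on the baby-Verma analogues of~\cite[Propositions~II.7.11 and~II.7.12]{jantzen}. Since $\mu_s$ is generic on the $s$-wall of the fundamental alcove (for the $\cdot_p$-action), the weight $w \cdot_p \mu_s$ lies on exactly one reflection hyperplane, and it lies in the closure of the alcove containing $w \cdot_p 0$; translation to that wall sends $\hZ(w \cdot_p 0)$ to the baby Verma module $\hZ(w \cdot_p \mu_s)$, and translation back sends $\hZ(w \cdot_p \mu_s)$ to a module carrying a two-step baby Verma filtration whose subquotients are $\hZ(w \cdot_p 0)$ and $\hZ(\sigma \cdot_p (w \cdot_p 0))$, where $\sigma$ is the reflection in the hyperplane carrying $w \cdot_p \mu_s$. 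No dominance hypothesis enters here, since baby Verma modules are defined for every weight; and genericity of $\mu_s$ together with regularity of $0$ guarantee that the filtration has exactly two subquotients. Passing to the Grothendieck group, $[\Theta_s(\hZ(w \cdot_p 0))] = [\hZ(w \cdot_p 0)] + [\hZ(\sigma \cdot_p (w \cdot_p 0))]$.

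It then remains to identify $\sigma \cdot_p (w \cdot_p 0)$ with $ws \cdot_p 0$. Under the identification $\mu \mapsto (\mu + \varsigma)/p$, which intertwines the dilated and shifted action of $\Waff$ on $\bX$ with the action on $V$ used to define $\mathscr{A}$ (and sends $0$ into the open fundamental alcove, thanks to $p > h$), the weight $w \cdot_p \mu_s$ goes to a generic point of the hyperplane $w(H)$, where $H$ is the $s$-wall of the fundamental alcove; hence $\sigma$ corresponds to the reflection $w s w^{-1}$ of $V$, and $\sigma \cdot_p (w \cdot_p 0) = (w s w^{-1}) \cdot_p (w \cdot_p 0) = ws \cdot_p 0$. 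The factor $\omega$ in $w = w' \omega$ does not interfere, because $\omega$ stabilises the fundamental alcove. This yields the asserted identity uniformly in $w \in \Wext$ and $s \in \Saff$.

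The step I expect to be the main obstacle is establishing rigorously the $G_1 T$-analogues of~\cite[II.7.11 and~II.7.12]{jantzen}: that translation to the $s$-wall turns a baby Verma module into a \emph{single} baby Verma module, and that translation off the wall produces exactly the expected two-term filtration with the correct subquotients. I would handle this either by a direct (if somewhat involved) character computation --- translation functors are, up to projection onto a block, given by tensoring with a fixed finite-dimensional $G$-module, and $\operatorname{ch} \hZ(\lambda) = e^{\lambda} \chi$ with $\chi$ independent of $\lambda$ --- or by transporting the corresponding statements from $\Rep(G)$ along the exact functor~\eqref{eqn:Res-0}, which commutes with $\Theta_s$: one would first treat a regular dominant weight lying deep inside a $p^n$-box, deducing the translation behaviour of the relevant baby Verma module from that of the Weyl (or induced) module of the same highest weight, and then reduce the general case using~\eqref{eqn:translation-babyVerma} and the $\Z$-basis property.
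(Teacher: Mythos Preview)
Your approach is essentially the same as the paper's: the paper's proof is the single sentence ``This follows from \cite[Equations~(2) and~(3) in~\S II.9.22]{jantzen},'' and your argument is precisely an unpacking of that citation---translation to the $s$-wall sends a baby Verma to a single baby Verma, translation off the wall produces a two-step baby Verma filtration, and the reflection through the wall is $wsw^{-1}$ acting via $\cdot_p$, giving $ws \cdot_p 0$.

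One correction, however: your opening claim that the classes $\{[\hZ(w \cdot_p 0)] : w \in \Wext\}$ form a $\Z$-basis of $[\Rep_\varnothing(G_1 T)]$ is false, and the paper itself explicitly says so in the remark immediately following this lemma. (Concretely, for $G=\mathrm{SL}_2$ one checks that expressing $[\hSim(0)]$ in terms of baby Verma classes requires an infinite alternating sum.) Fortunately you never actually use this claim: the lemma is already stated for a fixed $w$, so there is nothing to ``reduce,'' and the rest of your argument proceeds by direct computation of $[\Theta_s(\hZ(w \cdot_p 0))]$. You should simply delete that sentence.
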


\begin{proof}
 This follows from~\cite[Equations (2) and (3) in~\S II.9.22]{jantzen}.
\end{proof}

\begin{rmk}
 The formula in Lemma~\ref{lem:Theta-hT} suggests that the Grothendieck group $[\Rep_\varnothing(G_1 T)]$ is closely related with the right $\Z[\Waff]$-module $\Z \otimes_{\Z[v^{\pm 1}]} \Per$. However, two important remarks are in order. First, since $\mathscr{A}$ is in bijection with $\Waff$ rather than $\Wext$, to make this precise we would have to work with the ``true'' principal block $\Rep_0(G_1 T)$ in $\Rep(G_1 T)$, i.e.~the Serre subcategory generated by simple modules $\hSim(w \cdot_p 0)$ with $w \in \Waff$. But even then a difficulty would remain, since the classes of baby Verma modules do \emph{not} form a basis of the Grothendieck group $[\Rep_0(G_1 T)]$. We will not try to address this problem here.
\end{rmk}

\subsection{Injective/projective $G_1 T$-modules}
\label{ss:projective}

For $\lambda \in \bX$, we will denote by $\hQ(\lambda)$ the injective hull of $\hSim(\lambda)$ as a $G_1 T$-module, see~\cite[\S II.11.3]{jantzen}. As explained in~\cite[Equation~(3) in~\S II.11.5]{jantzen}, $\hQ(\lambda)$ is also the projective cover of $\hSim(\lambda)$ in this category. As for simple and baby Verma modules, for $\lambda,\mu \in \bX$ we have
\begin{equation}
\label{eqn:translation-proj}
 \hQ(\lambda + p\mu) \cong \hQ(\lambda) \otimes \bk_{\dot{T}}(\mu).
\end{equation}

If $\lambda \in \Wext \cdot_p 0$ then $\hQ(\lambda)$ belongs to $\Rep_\varnothing(G_1 T)$.
By~\cite[Proposition~II.11.4]{jantzen}, this module admits a filtration with subquotients of the form $\hZ(\mu)$ with $\mu \in \Wext \cdot_p 0$; moreover the number of occurrences of $\hZ(\mu)$ does not depend on the choice of such a filtration, and is equal to the multiplicity $[\hZ(\mu):\hSim(\lambda)]$. More generally, any projective object $\hQ$ in $\Rep_\varnothing(G_1 T)$ admits a filtration with subquotients of the form $\hZ(\mu)$ with $\mu \in \Wext \cdot_p 0$, and the number of occurrences of $\hZ(\mu)$ does not depend on the choice of filtration; this number will be denoted $(\hQ : \hZ(\mu))$.

\begin{lem}
\label{lem:T-Q-rho+Afund}
 For any $\omega \in \Omega$, the projective $G_1 T$-module $\hQ(t_\varsigma \wf \omega \cdot_p 0)$
 is the image under~\eqref{eqn:Res-0} of $\Til(p\varsigma + \omega \cdot_p 0)$. Moreover, for any $\mu$ in $\bX$ we have
 \[
  (\hQ(t_\varsigma \wf \omega \cdot_p 0) : \hZ(\mu)) = \begin{cases}
                                               1 & \text{if $\mu=t_\varsigma x \omega \cdot_p 0$ for some $x \in \Wf$;} \\
                                               0 & \text{otherwise.}
                                              \end{cases}
 \]
\end{lem}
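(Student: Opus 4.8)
The key input is Lemma~\ref{lem:Til-rho+Afund}, which identifies $\Til(p\varsigma + \omega \cdot_p 0)$ as a translate of the Steinberg module $\Sim((p-1)\varsigma)$, and gives its $\nabla$-multiplicities. The first step is to observe that the Steinberg module is projective over $G_1 T$: this is classical (see e.g.~\cite[\S II.10.2 and~\S II.11.1]{jantzen}), and since translation functors preserve projectivity over $G_1T$ (they are exact and send projectives to projectives; cf.~\cite[\S II.9.22]{jantzen}), it follows from the first part of Lemma~\ref{lem:Til-rho+Afund} that $\Til(p\varsigma + \omega\cdot_p 0)_{|G_1 T}$ is a projective object of $\Rep_\varnothing(G_1 T)$.

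**Identifying the injective hull.** Next I would pin down which projective $G_1T$-module this is. The hypothesis $p > h$ guarantees that $(p-1)\varsigma$ is dominant and $p$-restricted, so the highest weight $p\varsigma + \omega\cdot_p 0$ reduces mod $p$ (in the sense of the linkage picture) to $\omega\cdot_p 0$, and in fact $p\varsigma + \omega\cdot_p 0 = t_\varsigma \wf\omega \cdot_p 0$ up to the identification $w\Afund \leftrightarrow \text{alcove}$; this is the combinatorial translation of the statement "the top alcove of the tilting module corresponds to $t_\varsigma \wf\omega$", exactly as recorded in the picture for $\Sp_4$ in the introduction (the alcove marked $D$). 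One then invokes the general principle that an indecomposable tilting module which happens to be projective over $G_1T$ must be the indecomposable projective $\hQ(\mu)$ whose label $\mu$ is the highest weight — more precisely, a projective-injective $G_1T$-module whose socle (equivalently head) is $\hSim(\mu)$ is $\hQ(\mu)$. Since $\Til(p\varsigma+\omega\cdot_p 0)$ has simple head $\Sim(p\varsigma+\omega\cdot_p 0)$ as a $G$-module, and restriction to $G_1T$ of this simple $G$-module is $\hSim(p\varsigma+\omega\cdot_p 0) = \hSim(t_\varsigma\wf\omega\cdot_p 0)$ (using that the weight is restricted modulo a $p$-multiple, together with~\eqref{eqn:translation-simples}), one concludes $\Til(p\varsigma+\omega\cdot_p 0)_{|G_1T} \cong \hQ(t_\varsigma\wf\omega\cdot_p 0)$. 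The cleanest way to assemble this is probably to cite~\cite[Lemma~E.8 or \S E.10]{jantzen} (the discussion relating tilting $G$-modules of "Steinberg-like" highest weight to projective $G_1T$-modules), which is precisely Donkin's observation quoted in~\S\ref{ss:intro-proof-outline}.

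**Computing the baby Verma multiplicities.** For the second statement, the point is that the restriction functor~\eqref{eqn:Res-0} sends $\nabla(\lambda)$ to $\hZ(\lambda)$ for $\lambda$ in the extended principal block. Indeed, by~\cite[\S II.9.??]{jantzen} (the compatibility of induced modules with baby Verma modules; one uses that $\nabla(\lambda) = \Ind_{B}^{G}\bk_B(\lambda)$ restricts to $\mathrm{Coind}$-type object over $G_1$, cf.~\cite[\S II.9.2--9.3]{jantzen}) there is an identification of $\nabla(\lambda)_{|G_1T}$ with $\hZ(\lambda)$ (possibly after twisting by the appropriate character $\bk_{\dot T}(\mu)$ absorbed into the normalization), at least at the level of Grothendieck groups, which is all we need. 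A $\nabla$-filtration of $\Til(p\varsigma+\omega\cdot_p 0)$ then restricts to a filtration by (characters of) baby Verma modules, and applying this to the multiplicity formula in Lemma~\ref{lem:Til-rho+Afund} gives $(\hQ(t_\varsigma\wf\omega\cdot_p 0):\hZ(\mu)) = 1$ if $\mu = t_\varsigma x\omega\cdot_p 0$ for some $x\in\Wf$, and $0$ otherwise — noting that the weights $t_\varsigma x \omega\cdot_p 0$ for distinct $x\in\Wf$ are pairwise distinct (since $t_\varsigma$ is of maximal length in $t_\varsigma\Wf$, as used in the proof of Lemma~\ref{lem:rho+Af}), so there is no collapsing of multiplicities.

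**Main obstacle.** The genuinely delicate point is the bookkeeping in the second paragraph: verifying that the indecomposable tilting $G$-module, once known to be projective over $G_1T$, is the \emph{specific} projective $\hQ(t_\varsigma\wf\omega\cdot_p 0)$ — i.e.~getting the highest-weight label exactly right, including the $\wf$ and the $t_\varsigma$. This is a linkage/alcove computation rather than a deep fact, but it is the step where one must be careful about the conventions ($\cdot_p$ versus $\cdot$, the role of $\varsigma$, the identification of $\Omega$ with $\bX/\Z\Delta$), and about whether the restriction functor introduces a character twist that has to be normalized away. Everything else is a direct appeal to~\cite{jantzen} (Appendix~E for the tilting/$G_1T$-projective dictionary, Chapter~II.9 and~II.11 for baby Verma modules and their multiplicities) combined with Lemma~\ref{lem:Til-rho+Afund}.
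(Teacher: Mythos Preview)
Your overall strategy---start from Lemma~\ref{lem:Til-rho+Afund}, use that the Steinberg module is projective over $G_1T$, and that translation functors commute with restriction---is sound and is essentially how the paper proceeds. However, two of the specific steps you propose are incorrect.

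\textbf{The head of the tilting module.} You write that $\Til(p\varsigma+\omega\cdot_p 0)$ has simple head $\Sim(p\varsigma+\omega\cdot_p 0)$ as a $G$-module. This is false: indecomposable tilting modules almost never have simple heads. Already for $G=\mathrm{SL}_2$ and $\omega=e$, the module $\Til(p)$ has head $\Sim(p-2)$, not $\Sim(p)$. So your identification of \emph{which} indecomposable projective $G_1T$-module arises cannot go through this route. The paper instead works directly on the $G_1T$ side: it observes that $t_\varsigma\wf\omega\cdot_p 0$ lies in the alcove containing $(p-1)\varsigma$ in its upper closure, and then invokes \cite[\S II.11.10]{jantzen} to realize $\hQ(t_\varsigma\wf\omega\cdot_p 0)$ as the translate $T^{\omega_\varsigma\omega\cdot_p 0}_{\omega_\varsigma\cdot_p(-\varsigma)}$ of $\hQ((p-1)\varsigma)=\hSim((p-1)\varsigma)$. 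Comparing with the first formula of Lemma~\ref{lem:Til-rho+Afund} (the same translation functor applied to the same module, viewed in $\Rep(G)$) gives the identification immediately, with no head computation needed.

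\textbf{Restriction of induced modules.} Your claim that $\nabla(\lambda)_{|G_1T}\cong\hZ(\lambda)$, even ``at the level of Grothendieck groups,'' is also false: these modules do not have the same dimension. For the weights $\lambda=t_\varsigma x\omega\cdot_p 0$ in question (e.g.\ $\lambda=p\varsigma$ for $G=\mathrm{SL}_2$, $\omega=e$, $x=e$) one has $\dim\nabla(\lambda)\neq p^{|\Delta^+|}=\dim\hZ(\lambda)$. So the $\nabla$-filtration of the tilting module does \emph{not} restrict to a $\hZ$-filtration factor by factor. The paper avoids this entirely: having expressed $\hQ(t_\varsigma\wf\omega\cdot_p 0)$ as a translation of $\hQ((p-1)\varsigma)\cong\hZ((p-1)\varsigma)$, it reads off the baby Verma multiplicities directly from the behaviour of translation functors on baby Verma modules \cite[\S II.9.22, Equations~(2)--(3)]{jantzen}. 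This is the $G_1T$-analogue of the argument you used (via \cite[II.7.13]{jantzen}) in Lemma~\ref{lem:Til-rho+Afund}, and it is the correct replacement for your $\nabla$-restriction step.
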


\begin{proof}
The element $t_\varsigma \wf \omega \cdot_p 0 = (t_\varsigma \wf \omega_\varsigma^{-1}) \cdot_p (\omega_\varsigma \omega \cdot_p 0)$ belongs to the unique alcove which contains $(p-1)\varsigma = (t_\varsigma \wf \omega_\varsigma^{-1}) \cdot_p (\omega_\varsigma \cdot_p (-\varsigma))$ in its upper closure. Therefore, by~\cite[\S II.11.10]{jantzen}, the indecomposable projective $G_1 T$-module $\hQ(t_\varsigma \wf \omega \cdot_p 0)$ is obtained by translating from $\omega_\varsigma \cdot_p (-\varsigma)$ to $\omega_\varsigma \omega \cdot_p 0$ the $G_1 T$-module $\hQ((p-1)\varsigma)$. Moreover, by~\cite[\S II.9.16 and equation~(1) in~\S II.11.9]{jantzen} we have $\hQ((p-1)\varsigma) \cong \hSim((p-1)\varsigma) \cong \hZ((p-1)\varsigma)$. Then the description of $(\hQ(t_\varsigma \wf \omega \cdot_p 0) : \hZ(\mu))$ follows from the considerations surrounding Equations~(2)--(3) in~\cite[\S II.9.22]{jantzen}.

The first claim follows from the considerations above and Lemma~\ref{lem:Til-rho+Afund}.
\end{proof}

The following result is an easy consequence of~\cite[Corollar~4.5]{jantzen-darstellungen} (see also~\cite[\S 11.11]{jantzen}); see~\cite{donkin} or~\cite[\S E.9]{jantzen} for details.

\begin{thm}
\label{thm:jantzen}
Assume that $p \geq 2h-2$. Then for any restricted dominant weight $\lambda$ we have
\[
\hQ(\lambda) \cong \Til(2(p-1)\rho + \wf\lambda)_{| G_1T}.
\]
\end{thm}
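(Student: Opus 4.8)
The goal is to establish, for $p \geq 2h-2$ and $\lambda$ restricted dominant, the isomorphism $\hQ(\lambda) \cong \Til(2(p-1)\rho + \wf\lambda)_{|G_1T}$. I would begin by reducing the general case to the special case $\lambda = (p-1)\varsigma$ handled by Lemma~\ref{lem:T-Q-rho+Afund}, using the translation principle on both sides. Concretely, write $\mu := 2(p-1)\rho + \wf\lambda$ and observe (a height/restrictedness computation, valid precisely when $p \geq 2h-2$) that $\mu$ is dominant and lies in the closure of the dot-orbit of $0$, and that the alcove containing $\mu$ has $2(p-1)\rho = p\varsigma + (\text{suitable }\omega\cdot_p 0)$ — or more simply $2(p-1)\rho$ itself, up to the appropriate element of $\Omega$ — in its upper closure. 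The key numerical input is that $2(p-1)\rho + \wf\lambda$ is $\Wf$-conjugate (under $\cdot_p$) to a weight in the facet of the Steinberg weight, and this conjugacy is governed by exactly the combinatorics appearing in Lemma~\ref{lem:Til-rho+Afund} and Lemma~\ref{lem:T-Q-rho+Afund} with $\lambda$ replaced by the general restricted weight.

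**Main steps.** First I would recall from \cite[Corollar~4.5]{jantzen-darstellungen} (equivalently \cite[\S II.11.11]{jantzen}, in Donkin's formulation \cite{donkin}) the statement that $\hQ(\lambda)$, for $\lambda$ restricted dominant, is the restriction to $G_1T$ of an indecomposable tilting $G$-module, and identify which one: the highest weight must be the unique dominant weight in the relevant orbit lying above the Steinberg block, which forces it to be $2(p-1)\rho + \wf\lambda$. The cleanest route is: (i) note $\hQ((p-1)\varsigma) \cong \hSim((p-1)\varsigma) \cong \hZ((p-1)\varsigma)$ as in the proof of Lemma~\ref{lem:T-Q-rho+Afund}, and $\Til(2(p-1)\rho) = \Til((p-1)\varsigma + (p-1)\varsigma)$ restricts to this by the Steinberg/Donkin tensor-factorization of tilting modules (this is where $p \geq 2h-2$ genuinely enters, ensuring $2(p-1)\rho$ is in the right alcove position); (ii) apply the translation functor $T$ from the Steinberg facet to the facet of $\lambda$ (dilated-shifted by $p$), using that translation functors commute with restriction to $G_1T$ (the commuting square in \S\ref{ss:G1T}) and send indecomposable tilting modules to indecomposable tilting modules or $0$ by \cite[Proposition~E.11]{jantzen}, and send $\hQ$ to $\hQ$ by \cite[\S II.11.10]{jantzen}; (iii) check that the resulting highest weight on the tilting side is $2(p-1)\rho + \wf\lambda$ — this is a standard computation with the linkage and the longest element $\wf$, of exactly the same flavour as the identity $p\varsigma + \omega\cdot_p 0 = t_\varsigma\omega_\varsigma^{-1}\cdot_p(\omega_\varsigma\omega\cdot_p 0)$ used in Lemma~\ref{lem:Til-rho+Afund}.

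**The obstacle.** The main technical point is verifying that $2(p-1)\rho$ (or $2(p-1)\rho + \wf\lambda$) sits in the correct position relative to the alcove structure — namely that it lies in the upper closure of its alcove and that the translation functor from the Steinberg wall lands there without killing the module — and that this holds under the hypothesis $p \geq 2h-2$ rather than some stronger bound. This requires the inequality $\langle (p-1)\rho + \wf\lambda + \rho, \alpha^\vee\rangle \leq p\langle \rho, \alpha^\vee\rangle$-type estimates for all simple $\alpha$, using $\langle \rho, \alpha^\vee\rangle \leq h-1$ and $0 \leq \langle \lambda, \alpha^\vee\rangle \leq p-1$; these go back to Jantzen and are recorded in \cite[\S E.9]{jantzen} and \cite{donkin}. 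Everything else — the commuting square, indecomposability via Krull--Schmidt, baby Verma multiplicities — is then formal given the results already assembled in this section.
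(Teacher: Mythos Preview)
The paper gives no proof of this theorem; it merely records the statement with references to \cite[Corollar~4.5]{jantzen-darstellungen}, \cite{donkin}, and \cite[\S E.9]{jantzen}. Your proposal correctly identifies these sources but then attempts to reconstruct the argument via translation from the Steinberg weight, and this reconstruction has a genuine gap.

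Translating $\hQ((p-1)\varsigma)$ to a regular block yields $\hQ(\mu)$ for the \emph{unique} $\mu$ in that block whose alcove has $(p-1)\varsigma$ in its upper closure (this is exactly what Lemma~\ref{lem:T-Q-rho+Afund} does, following \cite[\S II.11.10]{jantzen}). But a given block can contain several restricted dominant weights, and only one of them is reached this way. Already in type $A_2$, both $0$ and $p\varsigma - 2\rho = t_\varsigma \wf \cdot_p 0$ are restricted weights in the principal block; translation from the Steinberg vertex yields $\hQ(p\varsigma - 2\rho) \cong \Til(p\varsigma)_{|G_1T}$, but says nothing directly about $\hQ(0)$. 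Wall-crossing functors within a block do not preserve indecomposability, so there is no evident way to pass from one restricted weight to another inside the same block by your method.

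There is also a slip in your step (i): plugging $\lambda = (p-1)\varsigma$ into the formula gives $2(p-1)\rho + \wf\lambda = (p-1)\varsigma$ (since $\wf\varsigma = \varsigma - 2\rho$), so the tilting module in that base case is $\Til((p-1)\varsigma)$, the Steinberg module itself, not $\Til(2(p-1)\rho)$; the latter corresponds to $\lambda = 0$.

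The argument in the cited references is structured differently and does not proceed by reduction to a single base case. Jantzen proves directly, via weight estimates that genuinely require $p \geq 2h-2$, that each $\hQ(\lambda)$ with $\lambda$ restricted admits a compatible $G$-module structure. Donkin then observes that this $G$-module has both a good filtration and a Weyl filtration, hence is tilting; it is indecomposable over $G$ because it is already indecomposable over $G_1T$, and the highest weight $2(p-1)\rho + \wf\lambda$ is read off from the baby Verma filtration of $\hQ(\lambda)$.
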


We will say that an element $w \in \Wext$ is \emph{restricted} if $w \cdot_p 0$ is a restricted dominant weight. Note that this condition does not depend on $p$, and that restricted elements belong to $\fWext$.

In terms of the orbit $\Wext \cdot_p 0$, since $\wf(\varsigma)=\varsigma-2\rho$,
Theorem~\ref{thm:jantzen} implies in particular that (if $p \geq 2h-2$) for any $w \in \Wext$ such that $t_\varsigma w$ is restricted, we have
\begin{equation}
\label{eqn:Til-hQ}
\hQ(t_\varsigma w \cdot_p 0) \cong \Til(t_\varsigma \wf w \cdot_p 0)_{|G_1T}.
\end{equation}

\subsection{Characters of tilting modules as $G_1 T$-modules}

Now we set
\[
 \MMsphext := \Z \otimes_{\Z[v^{\pm 1}]} \Msphext,
\]
and still denote by $\varphi : \MMsphext \to \MMasphext$ and $\zeta : \MMsphext \to \Z[\Wext]$ the (injective) morphisms induced by~\eqref{eqn:def-phi} and~\eqref{eqn:zeta} respectively. We then consider the maps
\[
\xymatrix{
 \MMsphext \ar[d]_-{\zeta} \ar[r]^-{\varphi} & \MMasphext \ar[r]^-{\eqref{eqn:Groth-group}}_-{\sim} & [\Rep_\varnothing(G)] \ar[r] & [\Rep_\varnothing(G_1 T)] \\
 \Z[\Wext]
 }
\]
where the rightmost arrow is induced by the restriction functor~\eqref{eqn:Res-0}.

\begin{prop}
\label{prop:mult-tilting-babyVerma}
 Let $M$ be a tilting module in $\Rep_\varnothing(G)$, all of whose direct summands are of the form $\Til(t_\varsigma w \cdot_p 0)$ with $w \in \fWext$. Then $M_{|G_1 T}$ is a projective $G_1 T$-module. Moreover, the inverse image $a$ of $[M]$ under~\eqref{eqn:Groth-group} belongs to the image of $\varphi$, and the image under $\zeta$ of the preimage of $a$ is equal to
 \[
  \sum_{w \in \Wext} \bigl( M_{|G_1 T} : \hZ (w \cdot_p 0 + p\varsigma) \bigr) \cdot w.
 \]
\end{prop}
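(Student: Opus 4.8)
The plan is to realise all the relevant objects as right $\Z[\Wext]$-modules and reduce to checking one identity on a cyclic generator. First the bookkeeping on the right-hand side. Write $[M] = \sum_{w \in \fWext} c_w [\Til(t_\varsigma w \cdot_p 0)]$ with $c_w \in \Z_{\geq 0}$, almost all zero. By Theorem~\ref{thm:amrw} the preimage of $[\Til(t_\varsigma w \cdot_p 0)]$ under~\eqref{eqn:Groth-group} is $1 \otimes \puN_{t_\varsigma w}$, hence $a = \sum_w c_w(1 \otimes \puN_{t_\varsigma w})$; by Theorem~\ref{thm:main} this equals $\varphi\bigl(\sum_w c_w(1 \otimes \puM_w)\bigr)$, and since $\varphi$ is injective (Remark~\ref{rmk:main-thm}) the preimage of $a$ is $b := \sum_w c_w(1 \otimes \puM_w)$, with $\zeta(b) = \sum_w c_w(1 \otimes \puH_{\wf w})$ by~\eqref{eqn:zeta-puM}. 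Thus it remains to prove that $M_{|G_1 T}$ is projective and that $\zeta$, transported across $\varphi$ and restriction to $G_1 T$, records baby Verma multiplicities shifted by $t_\varsigma$.

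For projectivity I would argue by reduction to the Steinberg case. Lemma~\ref{lem:T-Q-rho+Afund} (with $\omega = e$) gives $\Til(p\varsigma)_{|G_1 T} \cong \hQ(t_\varsigma \wf \cdot_p 0)$, a projective $G_1 T$-module. For a general $w \in \fWext$, standard facts about lengths in $\Wext$ — in particular that $t_\varsigma$ is of minimal length in $\Wf t_\varsigma$, while $w$ is of minimal length in $\Wf w$ — show, choosing a reduced expression for $w$ in $\Saff$ and $\Omega$, that $\Til(t_\varsigma w \cdot_p 0)$ occurs as a direct summand of a word in the functors $\Theta_s$ and the $\Omega$-twists applied to $\Til(p\varsigma)$, the length of the relevant alcove increasing by one at each stage so that the tilting module attached to the longer alcove does occur. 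Since the $G_1 T$-lift of each $\Theta_s$ (and of each $\Omega$-twist) is a translation functor it preserves projectivity over $G_1 T$; as direct summands of projectives are projective, $\Til(t_\varsigma w \cdot_p 0)_{|G_1 T}$, and therefore $M_{|G_1 T}$, is projective.

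Next I would work in the split Grothendieck group of the category of projective $G_1 T$-modules in $\Rep_\varnothing(G_1 T)$, free on the classes $[\hQ(\lambda)]$ (one must use it rather than baby Verma classes, which do not form a basis), and a right $\Z[\Wext]$-module via $[\hQ] \cdot \uH_s = [\Theta_s \hQ]$ and the $\Omega$-action. Define $\theta$ on it by $\theta([\hQ]) = \sum_{u \in \Wext}(\hQ : \hZ(t_\varsigma u \cdot_p 0))\, u$; using the honest form of Lemma~\ref{lem:Theta-hT} — that $\Theta_s$ sends a baby Verma filtration of a projective to one in which each $\hZ(x \cdot_p 0)$ is replaced by factors $\hZ(x \cdot_p 0)$ and $\hZ(xs \cdot_p 0)$ (Jantzen's construction recalled in~\S\ref{ss:G1T}) — one computes $\theta([\Theta_s \hQ]) = \theta([\hQ]) \cdot (1+s)$, so that $\theta$ is $\Z[\Wext]$-linear, $1 + s$ being the image of $\uH_s$ at $v = 1$; likewise for $\Omega$. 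Once projectivity is known, the rule $1 \otimes \puM_w \mapsto [\Til(t_\varsigma w \cdot_p 0)_{|G_1 T}]$ defines a map $\tilde\Lambda : \MMsphext \to [\text{proj}]$; it is $\Z[\Wext]$-linear because, by Theorems~\ref{thm:amrw} and~\ref{thm:main}, $\Theta_s\Til(t_\varsigma w\cdot_p 0)$ is the direct sum of the $\Til(t_\varsigma w'\cdot_p 0)$ with the multiplicities of $\puM_w \cdot \uH_s$ in $\Msphext$, and $\Theta_s$ commutes with restriction to $G_1 T$. Now $\theta \circ \tilde\Lambda$ and $\zeta$ are two $\Z[\Wext]$-linear maps $\MMsphext \to \Z[\Wext]$, and since $\MMsphext$ is generated over $\Z[\Wext]$ by $M_e = 1 \otimes \puM_e$ it suffices to compare them there: by~\eqref{eqn:zeta-id}, $\zeta(M_e) = \sum_{z \in \Wf} z$, while $\tilde\Lambda(M_e) = [\Til(p\varsigma)_{|G_1 T}] = [\hQ(t_\varsigma \wf \cdot_p 0)]$ has $\theta$-image $\sum_{u \in \Wf} u$ by the multiplicity formula in Lemma~\ref{lem:T-Q-rho+Afund}. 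Hence $\zeta = \theta \circ \tilde\Lambda$, and evaluating on $b$, using $\tilde\Lambda(b) = [M_{|G_1 T}]$, yields $\zeta(b) = \sum_{u \in \Wext}(M_{|G_1 T} : \hZ(t_\varsigma u \cdot_p 0))\, u = \sum_{u \in \Wext}(M_{|G_1 T} : \hZ(u \cdot_p 0 + p\varsigma))\, u$, which is the asserted formula.

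The step I expect to be the main obstacle is the verification that $\theta$ and $\tilde\Lambda$ are genuine $\Z[\Wext]$-module homomorphisms into the split Grothendieck group of projectives: this is exactly where one must pin down the effect of the translation functors $\Theta_s$ on honest baby Verma filtrations (not merely on Grothendieck classes, for which Lemma~\ref{lem:Theta-hT} already suffices) and on the tilting modules $\Til(t_\varsigma w \cdot_p 0)$, together with the input — already isolated above — that these tilting modules remain projective on restriction to $G_1 T$.
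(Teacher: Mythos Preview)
Your argument is correct, but it takes a genuinely different route from the paper's. The paper proves Proposition~\ref{prop:mult-tilting-babyVerma} \emph{without} invoking Theorem~\ref{thm:main} or Theorem~\ref{thm:amrw}: it cites \cite[Lemma~E.8]{jantzen} directly for projectivity, observes that every $M$ as in the statement is a direct summand of some $\Theta_{s_1}\cdots\Theta_{s_n}\bigl(\Til(p\varsigma+\omega\cdot_p 0)\bigr)$, and then runs a bare induction on $n$. The base case is handled by the explicit formulas~\eqref{eqn:uN-varsigma-2},~\eqref{eqn:zeta-id} and Lemmas~\ref{lem:Til-rho+Afund}--\ref{lem:T-Q-rho+Afund}; the induction step only uses that $\Theta_s$ acts as right multiplication by $(1+s)$ on both $\MMasphext$ and on baby Verma multiplicities (Lemma~\ref{lem:Theta-hT}). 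In particular the paper establishes that $a\in\varphi(\MMsphext)$ by this induction alone, not via Theorem~\ref{thm:main}.

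Your approach instead front-loads Theorems~\ref{thm:main} and~\ref{thm:amrw} to identify the preimage $b$ immediately, and then packages the induction into the single verification that two $\Z[\Wext]$-linear maps $\zeta$ and $\theta\circ\tilde\Lambda$ agree on the generator $M_e$. This is logically sound (Theorem~\ref{thm:main} is proved in Section~4 independently of this proposition, so there is no circularity), and it has the virtue of being more conceptual; it also makes the subsequent deduction of Theorem~\ref{thm:multiplicities} essentially a one-liner. The trade-off is that the paper's proof keeps Proposition~\ref{prop:mult-tilting-babyVerma} self-contained and elementary --- it would stand even without the geometric input of Section~4 --- whereas yours entangles it with the main theorem. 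A minor point: your treatment of the $\Omega$-action on the split Grothendieck group of projectives is left vague (``and the $\Omega$-action''); the paper sidesteps this entirely by absorbing $\Omega$ into the base case and inducting only over the $\Theta_s$.
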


\begin{proof}
 As explained in~\cite[Lemma~E.8]{jantzen}, an indecomposable tilting
 module $\Til(\lambda)$ (with $\lambda \in \bX^+$) is projective as a
 $G_1 T$-module iff $\lambda - (p-1)\varsigma \in \bX^+$. This implies
 the first claim in the proposition, and also that the $G$-modules $M$
 as in the statement are all isomorphic to direct sums of direct summands of modules of the form
 \[
  \Theta_{s_1} \Theta_{s_2} \cdots \Theta_{s_n} (\Til(p\varsigma + \omega \cdot_p 0))
 \]
with $s_1, \cdots, s_n$ in $\Saff$ and $\omega \in \Omega$. This reduces the proof of the proposition to the case of modules of this form. We will prove this case by induction on $n$.

First we treat the case $n=0$. By~\eqref{eqn:uN-varsigma-2}
and Lemma~\ref{lem:Til-rho+Afund} we have
\[
[\Til(p\varsigma + \omega \cdot_p 0)] = 
1 \otimes \uN_{t_\varsigma \omega} = \varphi(1 \otimes M_\omega).
\]
(Of course, this equality is also a special case of Theorem~\ref{thm:amrw}.)
Using~\eqref{eqn:zeta-id}, we deduce that the image under $\zeta$ of the preimage of $[\Til(p\varsigma + \omega \cdot_p 0)]$ is
\[
\sum_{x \in \Wf} 1 \otimes H_{x\omega}.
\]
On the other hand, by Lemma~\ref{lem:T-Q-rho+Afund} we know that $\Til(p\varsigma + \omega \cdot_p 0)_{| G_1T} = \hQ(t_\varsigma \wf \omega \cdot_p 0)$, and we know the multiplicities of baby Verma modules in this projective module. Comparing with the formula above, we deduce the desired claim.


To prove the induction step, we will prove that if the claim is true
for a module $M$, then it is true also for $\Theta_s(M)$ for any $s \in S$. As explained just after~\eqref{eqn:Groth-group}, if we denote by $a$ the inverse image of $[M]$, then the inverse image of $[\Theta_s(M)]$ is $a \cdot (\id + s)$. Hence if $a=\varphi(b)$, then this inverse image is $\varphi(a \cdot (\id + s))$. Now by Lemma~\ref{lem:Theta-hT}, for any $w \in \Wext$ we have
\[
 (\Theta_s(M_{|G_1 T}) : \hZ(p \varsigma + w \cdot_p 0)) = (M_{|G_1 T} : \hZ(p\varsigma + w \cdot_p 0)) + (M_{|G_1 T} : \hZ(p\varsigma + ws \cdot_p 0)),
\]
and the desired claim follows.
\end{proof}

\subsection{The simple character formula}

Our main application of Theorem~\ref{thm:main} is the following claim.

\begin{thm}
\label{thm:multiplicities}
 Assume that $p \geq 2h-2$. If $w \in \Wext$ is such that $t_\varsigma w$ belongs to $\fWext$ and is restricted, 
 then for any $y \in \Wext$ we have
 \[
  \left( \hQ(w \cdot_p 0) : \hZ(y \cdot_p 0) \right) = {}^p \hspace{-1pt} h_{y, w}(1).
 \]
\end{thm}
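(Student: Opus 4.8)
The idea is to reduce the statement to Theorem~\ref{thm:main} by combining Proposition~\ref{prop:mult-tilting-babyVerma}, Theorem~\ref{thm:amrw}, and the relation~\eqref{eqn:Til-hQ} between indecomposable tilting modules and indecomposable projective $G_1T$-modules. First I would fix $w\in\Wext$ with $t_\varsigma w\in\fWext$ restricted, and set $u := t_\varsigma w \in \fWext$, so that we may write $w = t_{-\varsigma} \cdot u$ in the notation of the extended affine Weyl group; note $\Til(t_\varsigma \wf w\cdot_p 0)=\Til(t_\varsigma\wf\cdot_p(\text{something}))$ is precisely one of the tilting modules appearing in Proposition~\ref{prop:mult-tilting-babyVerma} (with the relevant element being $\wf w$, or rather the element of $\fWext$ attached to it — here one must be slightly careful, since $\wf w$ need not lie in $\fWext$; but $t_\varsigma\wf w$ does, because $t_\varsigma u$ is of maximal length in $\Wf t_\varsigma u$, which is the statement dual to the one used in the proof of Lemma~\ref{lem:rho+Af}).

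Next, apply Proposition~\ref{prop:mult-tilting-babyVerma} to $M := \Til(t_\varsigma\wf w\cdot_p 0)$. It tells us that $M_{|G_1T}$ is projective, that the inverse image $a$ of $[M]$ under~\eqref{eqn:Groth-group} lies in the image of $\varphi$, and that the image under $\zeta$ of the $\varphi$-preimage of $a$ equals $\sum_{z\in\Wext}(M_{|G_1T}:\hZ(z\cdot_p 0+p\varsigma))\,z$. By~\eqref{eqn:Til-hQ} (using $p\ge 2h-2$) we have $M_{|G_1T}\cong \hQ(t_\varsigma w\cdot_p 0)=\hQ(u\cdot_p 0)$; since $t_\varsigma w = u$ one checks $z\cdot_p 0+p\varsigma = t_\varsigma z\cdot_p 0$, so this coefficient is $(\hQ(u\cdot_p 0):\hZ(t_\varsigma z\cdot_p 0))$. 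Reindexing $y := t_\varsigma z$, i.e. $z = t_{-\varsigma}y$, and recalling $u = t_\varsigma w$, we get that $\zeta$ applied to the $\varphi$-preimage of $a$ is $\sum_{y}(\hQ(w\cdot_p 0)\otimes\bk_{\dot T}(\ldots):\ldots)$ — here one uses the translation isomorphisms~\eqref{eqn:translation-proj} and~\eqref{eqn:translation-babyVerma} to rewrite everything purely in terms of $\hQ(w\cdot_p 0)$ and $\hZ(y\cdot_p 0)$, absorbing the shift by $p\varsigma$.

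Now identify $a$ on the other side. By Theorem~\ref{thm:amrw}, $a = 1\otimes\puN_{t_\varsigma\wf w}$ — wait, more precisely $[\Til(t_\varsigma\wf w\cdot_p 0)]$ corresponds to $1\otimes\puN_{t_\varsigma\wf w}$; but by the structure of Proposition~\ref{prop:mult-tilting-babyVerma}'s proof the relevant preimage under $\varphi$ is $1\otimes\puM_{\wf w}$ — one must match indices using Theorem~\ref{thm:main}, which gives $\varphi(\puM_v)=\puN_{t_\varsigma v}$, so taking $v=\wf w$ (legitimate since $\wf w$, hence $t_\varsigma\wf w\in\fWext$, is what appears) identifies the $\varphi$-preimage of $a$ with $1\otimes\puM_{\wf w}$. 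Then $\zeta(1\otimes\puM_{\wf w}) = 1\otimes\puH_{\wf\cdot\wf w} = 1\otimes\puH_w$ by~\eqref{eqn:zeta-puM} (since $\wf\wf = e$), whose coefficient of $y$ in the standard basis is exactly $\pph_{y,w}(1)$ by the definition of $p$-Kazhdan--Lusztig polynomials and the specialization $v\mapsto 1$. Comparing the two expressions for $\zeta$ of the $\varphi$-preimage of $a$, coefficient by coefficient, yields $(\hQ(w\cdot_p 0):\hZ(y\cdot_p 0)) = \pph_{y,w}(1)$, as desired.

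The main obstacle I expect is the bookkeeping around the shift $t_\varsigma$ versus $t_{-\varsigma}$ and the precise element of $\fWext$ that indexes the tilting module: one needs $t_\varsigma \wf w$ (not $\wf w$) to be the correct label, and the passage between $\hQ(u\cdot_p 0)$ with $u=t_\varsigma w$ and $\hQ(w\cdot_p 0)$ via~\eqref{eqn:translation-proj} must be done carefully, together with the corresponding identity $\puH_{\wf u} = \puH_{\wf t_\varsigma w}$ versus $\puH_w$ — here the key input is $\zeta(\puM_{\wf w}) = \puH_{\wf\cdot\wf w}=\puH_w$ combined with Theorem~\ref{thm:main} in the form $\varphi(\puM_{\wf w})=\puN_{t_\varsigma \wf w}$, and one should double-check that this last element is indeed the class of $\Til(t_\varsigma\wf w\cdot_p 0)$ under Theorem~\ref{thm:amrw}. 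Everything else is a routine matching of coefficients once these identifications are in place.
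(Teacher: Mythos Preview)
Your proposal is correct and follows essentially the same route as the paper's proof: apply~\eqref{eqn:Til-hQ} to identify $\Til(t_\varsigma \wf w \cdot_p 0)_{|G_1T}$ with $\hQ(t_\varsigma w \cdot_p 0)$, use Theorem~\ref{thm:amrw} to recognise its class as $1\otimes\puN_{t_\varsigma \wf w}$, invoke Theorem~\ref{thm:main} to pull this back along $\varphi$ to $1\otimes\puM_{\wf w}$, apply~\eqref{eqn:zeta-puM} to get $1\otimes\puH_w$, feed this into Proposition~\ref{prop:mult-tilting-babyVerma}, and finally use the translation identities~\eqref{eqn:translation-babyVerma} and~\eqref{eqn:translation-proj} to strip off the shift by $p\varsigma$. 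The paper does exactly this, only more tersely and without the intermediate relabelling $u=t_\varsigma w$; your anticipated bookkeeping obstacle (that $\wf w$ and $t_\varsigma \wf w$ lie in $\fWext$) is real but minor, and the paper handles it implicitly just as you do.
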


\begin{proof}
By~\eqref{eqn:Til-hQ}, we have
 \[
\Til(t_\varsigma \wf w \cdot_p 0)_{|G_1 T} \cong \hQ(p\varsigma + w \cdot_p 0).
 \]
 By Theorem~\ref{thm:amrw}, the class $[\Til(t_\varsigma \wf w \cdot_p 0)]$ in $[\Rep_0(G)]$ is the image of $1 \otimes \puN_{t_\varsigma \wf w}$ under~\eqref{eqn:Groth-group}. Now by Theorem~\ref{thm:main} we have
 \[
  1 \otimes \puN_{t_\varsigma \wf w} = \varphi(1 \otimes \puM_{\wf w}).
 \]
 Using~\eqref{eqn:zeta-puM} and Proposition~\ref{prop:mult-tilting-babyVerma} we deduce that
\[
 \sum_{x \in \Wext} \left( \hQ(p\varsigma + w \cdot_p 0) : \hZ(p\varsigma + x \cdot_p 0) \right) \cdot x = 1 \otimes \puH_{w}.
\]
Since $( \hQ(p\varsigma + w \cdot_p 0) : \hZ(p\varsigma + x \cdot_p 0) )= ( \hQ(w \cdot_p 0) : \hZ(x \cdot_p 0) )$ for any $x \in \Wext$ (see~\eqref{eqn:translation-babyVerma} and~\eqref{eqn:translation-proj}), this implies the desired equality.
\end{proof}

\begin{rmk}
 \begin{enumerate}
  \item
  Let $w_{\max} \in \Waff$ be the unique element such that $w_{\max} \cdot_p 0$ belongs to the (shifted and dilated) alcove of $-p\varsigma$.
  The main result of~\cite{fiebig-sheaves} (see in
  particular~\cite[Theorems~7.8 and~8.6]{fiebig-sheaves}) states that
  if $\puH_x=\uH_x$ for any $x \in \Waff$ such that $x \leq w_{\max}$
  in the Bruhat order, then we have $[\hZ(y \cdot_p 0) : \hSim(w
  \cdot_p 0)] = h_{y,w}(1)$ for any $w,y \in \Waff$ such that
  $t_\varsigma w$ belongs to $\fWext$ and is restricted (and hence
  Lusztig's conjecture holds). Of course, this claim also follows from Theorem~\ref{thm:multiplicities}.
  \item
  Once the multiplicities $( \hQ(w \cdot_p 0) : \hZ(y \cdot_p 0) )$ are known for any $w,y$ as in Theorem~\ref{thm:multiplicities}, using~\eqref{eqn:translation-babyVerma} and~\eqref{eqn:translation-proj} we can deduce these multiplicities for any $w,y \in \Waff$. Then, using the reciprocity formula
\[
 ( \hQ(w \cdot_p 0) : \hZ(y \cdot_p 0) ) = [\hZ(y \cdot_p 0) : \hSim(w \cdot_p 0)]
\]
(see~\S\ref{ss:projective})
one can deduce the multiplicities on the right-hand side of this equality. And this information allows to compute the characters of the modules $\hSim(w \cdot_p 0)$ for any $w \in \Wext$. In fact, using~\eqref{eqn:translation-simples} it suffices to do so when $w$ 
is restricted. In this case $\hSim(w \cdot_p 0)$ is the restriction of a $G$-module; hence its weights (and their multiplicities) are stable under $\Wf$. As a consequence, to determine them it suffices to compute the dominant weights appearing in $\hSim(w \cdot_p 0)$ and their multiplicities. Using the determination of the multiplicities $[\hZ(y \cdot_p 0) : \hSim(w \cdot_p 0)]$ and the ``triangularity'' of these numbers (see~\cite[Corollary~9.15(a)]{jantzen}) one can write
\begin{equation}
\label{eqn:character-simples}
 [\hSim(w \cdot_p 0)] = \sum_{x \in \Wext^{(1)}} a_{x} \cdot [\hZ(x \cdot_p 0)] + \sum_{y \in \Wext^{(2)}} b_y \cdot [\hSim(y \cdot_p 0)]
\end{equation}
with $\Wext^{(1)}, \Wext^{(2)}$ subsets of $\Wext$, in such a way that $\hSim(y \cdot_p 0)$ does not admit any dominant weight for $y \in \Wext^{(2)}$. The characters of the baby Verma modules are easy to compute, see e.g.~\cite[\S 3.1]{fiebig-moment-graph}. Hence from~\eqref{eqn:character-simples} one can compute the dominant weights appearing in $\hSim(w \cdot_p 0)$ and their multiplicities. (See also~\cite[\S4]{sobaje} for a different presentation of this procedure.)
\item Our assumptions on $p$ in Theorem~\ref{thm:multiplicities} are that $p \geq 2h-2$ and $p>h$. It is easily seen that these two conditions are equivalent to the condition that $p \geq 2h-1$.
 \end{enumerate}
\end{rmk}

\subsection{Proof of Theorem~\ref{thm:main-intro}}

We conclude the paper by explaining how Theorem~\ref{thm:multiplicities} implies Theorem~\ref{thm:main-intro} from the introduction.
As in~\S\ref{ss:intro-statement}, for $m$ in $\Per$ we denote by $[m]_{v \mapsto 1}$ its image in $\Z \otimes_{\Z[v^{\pm 1}]} \Per \cong \Z[\mathscr{A}]$. Recall that $\Waff$ acts on $\mathscr{A}$ on the right, see~\S\ref{ss:intro-sph-asph}. This induces in the natural way a structure of right $\Z[W]$-module on $\Z[\mathscr{A}]$, and it is clear from~\eqref{eqn:action-Per} that this action coincides with the one induced by the $\Haff$-action on $\Per$ (via the canonical isomorphism $\Z \otimes_{\Z[v^{\pm1}]} \Haff \cong \Z[W]$).

\begin{proof}[Proof of Theorem~\ref{thm:main-intro}]
 Since both $q_A$ and $\puP_{\hat{A}}$ are invariant under the replacement of $A$ by $\mu+A$ for $\mu \in \bX$ (see~\eqref{eqn:translation-proj} and~\eqref{eqn:puP-trans} respectively), we can assume that $A \subset \check{\Pi}_0$, so that $\hat{A}=\wf(A) \in \hat{\Pi}_0$. Then
 \[
  \puP_{\hat{A}} = \frac{1}{\pi_{\mathrm{f}}} \uP_{\Afund} \cdot \zeta(\puM_w),
 \]
where $w \in \Waff$ is the unique element such that $\wf(A)=\Afund \cdot w = w(\Afund)$, i.e.~such that $A=\wf w(\Afund)$. By construction we have
\[
 \zeta(\puM_w) = \puH_{\wf w} = \sum_{y \in \Waff} {}^p \hspace{-1pt} h_{y,\wf w} \cdot H_y,
\]
hence using~\eqref{eq:uPAfund} we obtain that
\begin{multline*}
 [\puP_{\hat{A}}]_{v \mapsto 1} = \frac{1}{|\Wf|} \left( \sum_{x \in \Wf} x(\Afund) \right) \cdot \left( \sum_{y\in\Waff} {}^p \hspace{-1pt} h_{y,\wf w}(1) \cdot y \right) \\
 = \frac{1}{|\Wf|} \left( \sum_{\substack{x \in \Wf \\ y\in\Waff}} {}^p \hspace{-1pt} h_{y,\wf w}(1) \cdot xy(\Afund) \right) \\
 = \sum_{z \in \Waff} \frac{1}{|\Wf|} \left(\sum_{x \in \Wf} {}^p \hspace{-1pt} h_{x^{-1}z,\wf w}(1) \right) \cdot z(\Afund).
\end{multline*}
Now since $w \in \fW$, the element $\wf w$ is maximal in $\Wf \cdot \wf w$, so that the parity complex $\cE_{\wf w}$ (see~\S\ref{ss:categorification}) is constructible with respect to the stratification by $\Gv_\scO$-orbits, which implies that ${}^p \hspace{-1pt} h_{x^{-1}z,\wf w}(1) = {}^p \hspace{-1pt} h_{z,\wf w}(1)$ for any $x \in \Wf$. We deduce that
\[
 [\puP_{\hat{A}}]_{v \mapsto 1} = \sum_{z \in \Waff} {}^p \hspace{-1pt} h_{z,\wf w}(1) \cdot z(\Afund).
\]
Comparing with Theorem~\ref{thm:multiplicities} and the definition of $q_A$, we obtain the desired formula.
\end{proof}

\section{A combinatorial proof of Theorem~\ref{thm:intro-phi} in the case of Kazhdan--Lusztig bases}
\label{sec:comb-proof}

In this section we provide an alternative proof of the version of Theorem~\ref{thm:intro-phi} for ``standard'' Kazhdan--Lusztig bases, see Remark~\ref{rmk:intro-Hext}\eqref{it:thm-classical}. This proof is
based on the results of~\cite{soergel-comb-tilting},\footnote{Some of
  the results of~\cite{soergel-comb-tilting} on which this proof is
  based are not originally due to Soergel, but are restatements of
  results due to Lusztig and to Kato. We refer
  to~\cite{soergel-comb-tilting} for a discussion of the original
  references.} (and is therefore ``combinatorial'') and was explained
to us by Soergel.

In this section we assume that $G$ is semisimple (and simply connected).
As in~\S\ref{ss:intro-statement} we consider the ``periodic module''
$\Per$ for $\cH$. As explained in~\cite[\S 4]{soergel-comb-tilting} the family $(\uP_A : A \in \scA)$ forms a $\Z[v^{\pm 1}]$-basis of a certain $\cH$-submodule $\Per^\circ \subset \Per$. By~\cite[Lemma~4.9]{soergel-comb-tilting}, the action of $\bX$ on $\Per$ considered in~\S\ref{ss:periodic-module} extends to a $\Z[v^{\pm 1}]$-linear action of the extended affine Weyl group $\Wext$ on $\Per^\circ$, for which the action of $w \in \Wext$ is denoted $\langle w \rangle : \Per^\circ \to \Per^\circ$. 
It follows from~\eqref{eqn:trans-action} that this action satisfies the following formula:
for $\omega \in \Omega$ and $w \in W$ we have
\[
\langle \omega w \rangle (P \cdot h)=(\langle \omega w \rangle P) \cdot (H_\omega h H_\omega^{-1})
\] 
for $P \in \Per^\circ$ and $h \in \cH$. (Here the element $H_\omega$ belongs to the larger algebra $\Hext$ introduced in~\S\ref{ss:sph-asph}; conjugation by this element stabilizes $\Haff$.)

We next consider the map
\[
\mathrm{alt} : \Per^\circ \to \Per^\circ
\]
defined by the formula
\[
\mathrm{alt}(P) = \sum_{w \in \Wf} (-1)^{\ell(w)} \langle w \rangle P.
\]
We will also consider the $\Z[v^{\pm 1}]$-linear map
\[
\mathrm{res} : \Per \to \Masph
\]
determined by
\[
\mathrm{res}(A)=\begin{cases}
N_A & \text{if $A \in \scA^+$;} \\
0 & \text{otherwise.}
\end{cases}
\]
This morphism is not $\cH$-linear; but it follows from~\cite[Proposition~5.2]{soergel-comb-tilting} that the composition $\mathrm{res} \circ \mathrm{alt}$ \emph{is} $\cH$-linear.
Moreover, \cite[Theorem~5.3(1)]{soergel-comb-tilting} says that for $A \in \scA_\rho^+$ we have
\begin{equation}
\label{eqn:res-Masph}
\mathrm{res} \circ \mathrm{alt}(\uP_A)=\uN_A.
\end{equation}

On the other hand, let us consider the morphism of right $\cH$-modules 
\[
\mathrm{Alt} := \langle t_{-\rho} \rangle \circ \mathrm{alt} \circ \langle t_\rho \rangle : \Per^\circ \to \Per^\circ.
\]
In~\cite[\S 6]{soergel-comb-tilting} Soergel introduces the $\Z[v^{\pm 1}]$-module $\hat{\Per}$ consisting of certain formal linear combinations $\sum_A f_A A$, and the map $\eta : \hat{\Per} \to \hat{\Per}$. The module $\hat{\Per}$ contains $\Per$ as a submodule in the natural way. We also have a map
\[
\mathrm{Res} : \hat{\Per} \to \Msph
\]
sending $\sum_A f_A A$ to $\sum_{A \in \scA^+} f_A M_A$. (Here the linear combination $\sum_{A \in \scA^+} f_A M_A$ is finite due to the form of the combinations authorized in $\hat{\Per}$.) Then~\cite[Corollary~6.9]{soergel-comb-tilting} states that for any $A \in \scA^+$ we have
\begin{equation}
\label{eqn:Res-Msph}
\uM_A = \mathrm{Res} \circ \eta \circ \mathrm{Alt} (\uP_A).
\end{equation}
By~\cite[Proposition~6.6]{soergel-comb-tilting}, the map $\mathrm{Res} \circ \eta \circ \mathrm{Alt}$
  is $\cH$-linear; it follows that $\mathrm{Res} \circ \eta : \mathrm{Alt}(\Per^\circ) \to \Msph$ is $\cH$-linear as well.
One can check that the elements $(\mathrm{Alt} (\uP_A) : A \in \scA^+)$ form a $\Z[v^{\pm 1}]$-basis of the sub-$\cH$-module $\mathrm{Alt}(\Per^\circ) \subset \Per^\circ$; therefore this formula implies that $\mathrm{Res} \circ \eta$ induces an isomorphism of $\cH$-modules from $\mathrm{Alt}(\Per^\circ)$ to $\Msph$.

By~\eqref{eq:Ptrans}
we have $\langle t_\rho \rangle(\uP_A)=\uP_{\rho + A}$ for any $A \in \scA$; hence the formula~\eqref{eqn:Res-Msph} can be written as
\[
\uM_A = \mathrm{Res} \circ \eta \circ \langle t_{-\rho} \rangle \circ \mathrm{alt} (\uP_{\rho+A})
\]
for any $A \in \scA^+$.

Fix now $A \in \scA^+$, and
choose $h \in \cH$ such that $\uM_A=\uM_{\Afund} \cdot h$. Then we have
\begin{multline*}
\mathrm{Res} \circ \eta \circ \langle t_{-\rho} \rangle \circ \mathrm{alt} (\uP_{\rho+A}) = \uM_A = \uM_{\Afund} \cdot h \\
= \bigl( \mathrm{Res} \circ \eta \circ \langle t_{-\rho} \rangle \circ \mathrm{alt} (\uP_{\rho+\Afund}) \bigr) \cdot h = \mathrm{Res} \circ \eta \circ \langle t_{-\rho} \rangle \circ \mathrm{alt} (\uP_{\rho+\Afund} \cdot H_{\omega_\rho} h H_{\omega_\rho}^{-1}).
\end{multline*}
By injectivity of $\mathrm{Res} \circ \eta$ on $\mathrm{Alt}(\Per^\circ)$ we deduce that
\[
\langle t_{-\rho} \rangle \circ \mathrm{alt} (\uP_{\rho+A}) = \langle t_{-\rho} \rangle \circ \mathrm{alt} (\uP_{\rho+\Afund} \cdot H_{\omega_\rho} h H_{\omega_\rho}^{-1} ),
\]
and then that
\[
\mathrm{alt} (\uP_{\rho+A}) = \mathrm{alt} (\uP_{\rho+\Afund} \cdot H_{\omega_\rho} h H_{\omega_\rho}^{-1}).
\]
Applying $\mathrm{res}$ and using~\eqref{eqn:res-Masph}, we deduce that
\begin{multline}
\label{eqn:comb-proof}
\uN_{\rho + A} = \mathrm{res} \circ \mathrm{alt} (\uP_{\rho+A}) = \mathrm{res} \circ \mathrm{alt} (\uP_{\rho+\Afund} \cdot H_{\omega_\rho} h H_{\omega_\rho}^{-1}) \\
= \bigl( \mathrm{res} \circ \mathrm{alt} (\uP_{\rho+\Afund}) \bigr) \cdot H_{\omega_\rho} h H_{\omega_\rho}^{-1} = \uN_{\rho+\Afund} \cdot H_{\omega_\rho} h H_{\omega_\rho}^{-1}.
\end{multline}

Now we have $S_\rho=\tau_\rho(\Sf)=\omega_\rho \Sf \omega_\rho$, see Remark~\ref{rmk:Omega}. Hence there exists an isomorphism of $\Z[v^{\pm 1}]$-modules
\[
\Msph \simto \Msph_{\overline{\rho}}
\]
which sends $M_\id \cdot h$ to $M^{\overline{\rho}}_\id \cdot H_{\omega_\rho} h H_{\omega_\rho}^{-1}$ for any $h \in \cH$. In terms of the parametrization by alcoves, this morphism sends $M_B$ to $M^\rho_{\rho+B}$ for any $B \in \scA^+$. (In fact, if $B=w(\Afund)$ with $w \in \fW$, then $M_B=M_{\Afund} \cdot H_w$ is sent to $M^\rho_{\rho+\Afund} \cdot H_{\omega_\rho} H_w H_{\omega_\rho}^{-1}=M^\rho_{(\rho+\Afund) \cdot \omega_\rho w \omega_\rho^{-1}}=M^\rho_{x_\rho \omega_\rho w \omega_\rho^{-1}(\Afund)}=M^\rho_{\rho+B}$.) This morphism also commutes with the appropriate Kazhdan--Lusztig involutions, hence sends $\uM_B$ to $\uM^\rho_{\rho+B}$ for any $B \in \scA^+$. It follows that
\[
M^\rho_{\rho+\Afund} \cdot H_{\omega_\rho} h H_{\omega_\rho}^{-1}=\uM^\rho_{\rho+A},
\]
which proves that
\[
\uN_{\rho+\Afund} \cdot H_{\omega_\rho} h H_{\omega_\rho}^{-1} = \varphi_\rho(\uM^\rho_{\rho+A}).
\]
Comparing with~\eqref{eqn:comb-proof} we finally obtain that $\uN_{\rho + A}=\varphi_\rho(\uM^\rho_{\rho+A})$, which finishes the proof.



\begin{thebibliography}{BGMRR}

\bibitem[ACR]{acr}
P.~Achar, N.~Cooney, and S.~Riche, \emph{The parabolic exotic t-structure}, 
\'Epijournal G\'eom. Alg\'ebrique \textbf{2}, article 8.

\bibitem[AMRW]{amrw}
 P.~Achar, S.~Makisumi, S.~Riche, and G.~Williamson, \emph{Koszul duality for Kac--Moody groups and characters of tilting modules}, 
 J. Amer. Math. Soc. \textbf{32} (2019), 261-310
 
 \bibitem[AR]{ar-survey}
 P.~Achar and S.~Riche, \emph{Dualit\'e de Koszul formelle et th\'eorie des repr\'esentations modulaire des groupes alg\'ebriques r\'eductifs}, in \emph{SMF 2018 : Congr\`es de la Soci\'et\'e Math\'ematique de France} (E. Breuillard, Ed.), 83--172, S\'emin. Congr. 33, Soc. Math. France, 2019.

\bibitem[An]{andersen2}
H.~H.~Andersen, \emph{Tilting modules for algebraic groups}, in \emph{Algebraic groups and their representations (Cambridge, 1997)}, 25--42,
NATO Adv. Sci. Inst. Ser. C Math. Phys. Sci., 517, Kluwer Acad. Publ., Dordrecht, 1998. 

\bibitem[AJS]{ajs}
H.~H.~Andersen, J.~C.~Jantzen, and W.~Soergel,
  \emph{Representations of quantum groups at a $p$-th root of unity and
  of semisimple groups in characteristic $p$: independence of $p$},
Ast{\'e}risque \textbf{220} (1994), 1--321.

\bibitem[BNPS]{bnps}
C. P. Bendel, D. K. Nakano, C. Pillen, and P. Sobaje,
\emph{Counterexamples to the tilting and $(p,r)$-filtration conjectures}, preprint \href{https://arxiv.org/abs/1901.06687}{arXiv:1901.06687}.

\bibitem[BBM]{bbm}
R.~Bezrukavnikov, A.~Braverman, and I.~Mirkovi{\'c}, \emph{Some results about geometric Whittaker model},
Adv. Math. \textbf{186} (2004), 
143--152. 

\bibitem[BGMRR]{bgmrr}
R.~Bezrukavnikov, D.~Gaitsgory, I.~Mirkovi\'c, S.~Riche, and L.~Rider, \emph{An Iwahori--Whittaker model for the Satake category}, J. \'Ec. polytech. Math.~\textbf{6} (2019), 707--735.
  
\bibitem[BY]{by}
R.~Bezrukavnikov and Z.~Yun, \emph{On Koszul duality for Kac--Moody groups}, Represent.~Theory~\textbf{17} (2013), 1--98.

\bibitem[Do]{donkin}
S.~Donkin, \emph{On tilting modules for algebraic groups}, Math. Z. \textbf{212} (1993), 39--60.

\bibitem[F1]{fiebig-moment-graph}
P.~Fiebig, \emph{Lusztig's conjecture as a moment graph problem},
Bull. Lond. Math. Soc.~\textbf{42} (2010), 957--972.

\bibitem[F2]{fiebig-sheaves}
P.~Fiebig, \emph{Sheaves on affine Schubert varieties, modular representations, and Lusztig's conjecture},
J. Amer. Math. Soc. \textbf{24} (2011),
133--181. 

\bibitem[F3]{fiebig}
P.~Fiebig, \emph{An upper bound on the exceptional characteristics for Lusztig's character formula}, J. Reine Angew. Math. \textbf{673} (2012), 1--31.

\bibitem[Hu]{humphreys}
J.~E.~Humphreys, \emph{Modular representations of classical Lie algebras and semisimple groups},
J. Algebra \textbf{19} (1971), 51--79.

\bibitem[IM]{im}
N.~Iwahori and H.~Matsumoto,
\emph{On some Bruhat decomposition and the structure of the Hecke rings of $\mathfrak{p}$-adic Chevalley groups},
Inst. Hautes \'Etudes Sci. Publ. Math. No. 25 (1965), 5--48.
 
\bibitem[J1]{jantzen-darstellungen}
J.~C.~Jantzen, \emph{Darstellungen halbeinfacher Gruppen und ihrer Frobenius-Kerne}, J. Reine Angew. Math. \textbf{317} (1980), 157--199. 

\bibitem[J2]{jantzen}
J.~C.~Jantzen, \emph{Representations of algebraic groups, Second edition}, Mathematical surveys and monographs 107, Amer. Math. Soc., 2003.

\bibitem[JW]{jw}
L.~T.~Jensen and G.~Williamson, \emph{The $p$-canonical basis for Hecke algebras}, in \emph{Categorification and higher representation theory}, 333--361, Contemp. Math. 683, Amer. Math. Soc., 
2017.

\bibitem[JMW]{jmw}
D.~Juteau, C.~Mautner, and G.~Williamson, \emph{Parity sheaves},
J. Amer. Math. Soc. \textbf{27} (2014), 1169--1212.

\bibitem[JMW2]{jmw2}
D.~Juteau, C.~Mautner, and G.~Williamson, \emph{Parity sheaves and tilting modules}, Ann. Sci. {\'E}c. Norm. Sup{\'e}r. \textbf{49} (2016), 257--275.

\bibitem[KT]{kt}
M.~Kashiwara and T.~Tanisaki, \emph{Kazhdan--Lusztig
conjecture for affine Lie algebras with negative level I}, Duke
Math. J. \textbf{77} (1995), 21--62; \emph{II, non-integral case}, Duke Math. J. \textbf{84} (1996), 771--813.

\bibitem[KL]{kl} 
D.~Kazhdan and G.~Lusztig, \emph{Tensor structures
arising from affine Lie algebras I}, J. Amer. Math. Soc. \textbf{6} (1993),
905--947; \emph{II}, J. Amer. Math. Soc. \textbf{6} (1993), 949--1011; \emph{III}, J. Amer. Math. Soc. \textbf{7} (1994), 335--381; \emph{IV}, J. Amer. Math. Soc. \textbf{7} (1994), 383--453.

 \bibitem[L1]{lusztig}
 G.~Lusztig, \emph{Some problems in the representation theory of finite Chevalley groups}, in \emph{The Santa Cruz Conference on Finite Groups}, pp. 313--317, Proc. Sympos. Pure Math. 37, Amer. Math. Soc., 1980.

\bibitem[L2]{lusztig-patterns}
G.~Lusztig, \emph{Hecke algebras and Jantzen's generic decomposition patterns},
Adv. in Math. \textbf{37} (1980), 
121--164. 

\bibitem[L3]{LUSMon}
G. Lusztig, \emph{Monodromic systems on affine flag manifolds}, Proc. Roy. Soc. London \textbf{445} (1994), 231--246. Errata in \textbf{450} (1995), 731--732.

\bibitem[MR]{mr}
C.~Mautner and S.~Riche, {\em Exotic tilting sheaves, parity sheaves
  on affine Grassmannians, and the Mirkovi{\'c}--Vilonen conjecture},
J. Eur. Math. Soc. \textbf{20} (2018), 2259--2332.

\bibitem[Ri]{riche-hab}
S.~Riche, \emph{Geometric Representation Theory in positive characteristic}, habilitation thesis, available at \url{https://tel.archives-ouvertes.fr/tel-01431526}.

\bibitem[RW1]{rw}
S.~Riche and G.~Williamson, \emph{Tilting modules and the $p$-canonical basis}, 
Ast\'erisque~\textbf{397} (2018).

\bibitem[RW2]{rw-smith}
S.~Riche and G.~Williamson, \emph{Smith--Treumann theory and the linkage principle}, preprint~\href{https://arxiv.org/abs/2003.08522}{arXiv:2003.08522}.

\bibitem[Sob]{sobaje}
P.~Sobaje, \emph{On character formulas for simple and tilting modules}, preprint~\href{https://arxiv.org/abs/1902.10308v2}{arXiv:1902.10308}.

 \bibitem[Soe]{soergel-comb-tilting}
 W.~Soergel, \emph{Kazhdan--Lusztig polynomials and a combinatoric[s] for tilting modules}, Represent. Theory \textbf{1} (1997), 83--114.

\bibitem[Sp]{springer}
T.~A.~Springer, \emph{Quelques applications de la cohomologie d'intersection}, in \emph{S\'eminaire N.~Bourbaki, Vol. 1981/82}, Ast\'erisque \textbf{92--93} (1982), Exp. 589, 249--273.

\bibitem[W2]{williamson}
G.~Williamson, \emph{Schubert calculus and torsion explosion}, with an appendix joint with A.~Kontorovich and P.~J.~McNamara, 
J. Amer. Math. Soc. \textbf{30} (2017), 
1023--1046. 

\end{thebibliography}
\end{document}